\pgfplotsset{compat=1.15}
\theoremstyle{plain}
\newtheorem*{theorem}{Theorem}
\newtheorem{thrm}{Theorem}[section]
\newtheorem{cor}[thrm]{Corollary}
\newtheorem{prop}[thrm]{Proposition}
\newtheorem{lem}[thrm]{Lemma}
\theoremstyle{definition}
\newtheorem{defn}[thrm]{Definition}
\newtheorem{rem}[thrm]{Remark}
\newtheorem{exm}[thrm]{Example}
\crefname{thrm}{Theorem}{Theorems}
\crefname{theorem}{Theorem}{Theorems}
\crefname{lem}{Lemma}{Lemmas}
\crefname{cor}{Corollary}{Corollaries}
\crefname{prop}{Proposition}{Propositions}
\crefname{defn}{Definition}{Definitions}
\crefname{exm}{Example}{Examples}
\crefname{rem}{Remark}{Remarks}
\crefname{conj}{Conjecture}{Conjectures}
\crefname{quest}{Question}{Questions}
\crefname{section}{Section}{Sections}
\crefname{equation}{\unskip}{\unskip}
\crefname{enumi}{\unskip}{\unskip}
\crefname{subsection}{Subsection}{Subsections}
\newcommand{\0}{\theta}
\newcommand{\ve}{\varepsilon}
\newcommand{\af}{\alpha}
\newcommand{\bt}{\beta}
\newcommand{\lb}{\lambda}
\newcommand{\vf}{\varphi}
\newcommand{\dl}{\delta}
\newcommand{\sg}{\sigma}
\newcommand{\B}{\mathcal{B}}
\newcommand{\Z}{\mathbb{Z}}
\newcommand{\m}{{}^{-1}}
\newcommand{\sst}{\subseteq}
\newcommand{\impl}{\Rightarrow}
\newcommand{\gen}[1]{\left\langle #1\right\rangle}
\newcommand{\lf}{\lfloor}
\newcommand{\rf}{\rfloor}
\newcommand{\ch}{\mathrm{char}}
\newcommand{\id}{\mathrm{id}}
\renewcommand{\iff}{\Leftrightarrow}
\begin{document}
	\title[Potent preservers of incidence algebras]{Potent preservers of incidence algebras}	
	
	\author{Jorge J. Garc{\' e}s}
	\address{ Departamento de Matem{\' a}tica Aplicada a la Ingenier{\' i}a Industrial, ETSIDI, Universidad Polit{\' e}cnica de Madrid, Madrid, Spain}
	\email{j.garces@upm.es}
	
	\author{Mykola Khrypchenko}
	\address{Departamento de Matem\'atica, Universidade Federal de Santa Catarina,  Campus Reitor Jo\~ao David Ferreira Lima, Florian\'opolis, SC, CEP: 88040--900, Brazil}
	\email{nskhripchenko@gmail.com}

	\subjclass[2010]{ Primary: 16S50, 15A86; secondary: 16W20, 17A36, 17B40, 17C30, 17C27}
	\keywords{Incidence algebra; idempotent preserver; tripotent preserver; $k$-potent preserver; automorphism; anti-automorphism; Lie automorphism; Jordan automorphism}
	
	\begin{abstract}
		Let $X$ be a finite connected poset, $F$ a field and $I(X,F)$ the incidence algebra of $X$ over $F$. We describe the bijective linear idempotent preservers $\vf:I(X,F)\to I(X,F)$. Namely, we prove that, whenever $\ch(F)\ne 2$, $\vf$ is either an automorphism or an anti-automorphism of $I(X,F)$. If $\ch(F)=2$ and $|F|>2$, then $\vf$ is a (in general, non-proper) Lie automorphism of $I(X,F)$. Finally, if $F=\Z_2$, then $\vf$ is the composition of a bijective shift map and a Lie automorphism of $I(X,F)$. Under certain restrictions on the characteristic of $F$ we also obtain descriptions of the bijective linear maps which preserve tripotents and, more generally, $k$-potents of $I(X,F)$ for $k\ge 3$.
	\end{abstract}
	
	\maketitle
	
	\tableofcontents
	
	\section*{Introduction}
	
	By a {\it preserver} on a (not necessarily associative) algebra $A$ one usually means a map on $A$ (possibly, with values in another algebra $B$) that preserves some fixed property of an element of $A$ (e.g. idempotency~\cite{Beasley-Pullman91,Dolinar03}, nilpotency~\cite{Botta-Pierce-Watkins83}, invertibility~\cite{Marcus-Purves59,Pazzis10}, non-invertibility~\cite{Dieudonne49}) or some fixed relation between elements of $A$ (e.g. orthogonality~\cite{BFPGMP08}, commutativity~\cite{Watkins76,Omladic-Radjavi-Semrl01,Bresar-Semrl06}, anti-commutativity~\cite{Fosner-KK-Sze11}). If $A$ is a subalgebra of $M_n(F)$, one can also consider the maps on $A$ preserving the determinant~\cite{Frobenius97,Dolinar-Semrl02}, permanent~\cite{Budrevich-Guterman-Duffner19}, rank~\cite{Beasley70,Beasley81} etc.
	
	
	Potent preservers on matrix algebras have been studied since 1980's. Chan, Lim and Tan~\cite{Chan-Lim-Tan87} proved that any idempotent preserver on $M_n(\mathbb C)$ or $M_n(\mathbb R)$ fixing the identity matrix is either an automorphism or an anti-automorphism. Beasley and Pullman~\cite{Beasley-Pullman91} described the semigroup of linear idempotent preservers of $M_n(F)$ over an arbitrary field $F$. They proved that, whenever $\ch(F)\ne 2$, this semigroup is generated by the transposition and the similarity maps, i.e. it coincides with the group of automorphisms and anti-automorphisms of $M_n(F)$. If $\ch(F)=2$, then the shift operators also appear among the generators. In~\cite{Chan-Lim92} Chan and Lim characterized tripotent preservers of $M_n(F)$, where $\ch(F)\not\in\{2,3\}$. For bijective linear maps on $M_n(F)$ over an algebraically closed field $F$ a much more general fact was proved by Howard in~\cite{Howard80}. He described those of them which preserve $A\in M_n(F)$ satisfying $P(A)=0$, where $P$ is a polynomial over $F$ admitting at least $2$ distinct roots. Bre\v{s}ar and \v{S}emrl~\cite{Bresar-Semrl93b} studied non-zero potent preservers on $M_n(\mathbb{C})$ without fixing the potency index. It turns out that such maps are $k$-potent preservers for some $k$ and thus they are automorphisms or anti-automorphisms of $M_n(\mathbb{C})$ multiplied by a $(k-1)$-th root of unity $c\in\mathbb{C}$.
	
	It follows from \cite[Corollary 4 and Theorem 5]{Molnar-Semrl98} that automorphisms and anti-automorphisms are the only bijective linear idempotent preservers of the upper-triangular matrix algebra $T_n(\mathbb C)$. Xu, Cao and Tang~\cite{Xu-Cao-Tang07} generalized this result to the case of an arbitrary field. As it was for $M_n(F)$, in characteristic $2$ shift maps appear in the description. On the other hand, Wang and You~\cite{Wang-You08} described $k$-potent preservers on $T_n(\mathbb{C})$ with values in $M_n(\mathbb{C})$. There also exists~\cite{Slowik14} a (quite complicated) characterization of idempotent preservers on the algebra $T_\infty(F)$ of infinite upper-triangular matrix algebras, where $\ch(F)\ne 2$.
	
	The incidence algebra $I(X,F)$ of a finite connected poset $X$ over a field $F$ is a natural generalization of $T_n(F)$. In this paper we study the bijective linear maps $I(X,F)\to I(X,F)$ preserving $k$-potents. The main part of our work is devoted to idempotent preservers ($k=2$), where our results can be gathered into the following theorem.
	
	\begin{theorem}
		Let $X$ be a finite connected poset, $F$ a field and $\vf:I(X,F)\to I(X,F)$ a bijective $F$-linear map.
		\begin{enumerate}
			\item If $\ch(F)\ne 2$, then $\vf$ is an idempotent preserver $\iff\vf$ is either an automorphism of $I(X,F)$ or an anti-automorphism of $I(X,F)$.
			\item If $\ch(F)=2$ and $|F|>2$, then $\vf$ is an idempotent preserver $\iff\vf$ is a Lie automorphisms of $I(X,F)$ of the form $\psi\circ\tau_{\0,\sg,c}$, where $\psi$ is an inner automorphism of $I(X,F)$ and $\tau_{\0,\sg,c}$ is an elementary Lie automorphism of $I(X,F)$ in which $c_i\in\{0,1\}$ for all $i$. 
			\item If $F=\Z_2$, then $\vf$ is an idempotent preserver $\iff\vf$ is a composition of a bijective shift map and a Lie automorphism of $I(X,F)$.
		\end{enumerate}
	\end{theorem}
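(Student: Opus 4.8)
For an automorphism $\vf$ one has $\vf(e)^2=\vf(e^2)=\vf(e)$, and for an anti-automorphism $\vf(e)^2=\vf(e)\vf(e)=\vf(e^2)=\vf(e)$, so both preserve idempotents; this gives ``$\Leftarrow$'' in (i). For (ii) one recalls the explicit form of an elementary Lie automorphism $\tau_{\0,\sg,c}$ and of an inner automorphism and checks by a direct computation that, in characteristic $2$, $e^2=e$ implies $(\psi\circ\tau_{\0,\sg,c})(e)^2=(\psi\circ\tau_{\0,\sg,c})(e)$, using $(x+y)^2=x^2+y^2$ when $y$ is central and $c_i^2=c_i$ since $c_i\in\{0,1\}$. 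For (iii) a bijective shift map $a\mapsto a+\sg(a)\cdot 1$ over $\Z_2$ satisfies $(a+\sg(a)1)^2=a^2+\sg(a)^2 1=a+\sg(a)1$ whenever $a^2=a$, because $\sg(a)\in\{0,1\}$; composing with a Lie automorphism preserves this, giving ``$\Leftarrow$'' in (iii).

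\textbf{The forward implication: diagonal idempotents.} Fix a linear extension of $X$, so that $I(X,F)\sst T_n(F)$ with basis $\{e_{ij}:i\le j\}$; put $e_i=e_{ii}$. Since the $(i,i)$-entry $t$ of any idempotent satisfies $t^2=t$, every idempotent of $I(X,F)$ has diagonal entries in $\{0,1\}$ and hence a well-defined support. One uses three families of idempotents: the \emph{arrow idempotents} $e_i+\lb e_{ij}$ ($i<j$), the \emph{interval idempotents} $\sum_{i\in S}e_i$, and, for $i<j<k$, the \emph{triangle idempotents} $e_i+e_k+\lb e_{ij}+\mu e_{jk}-\lb\mu\, e_{ik}$. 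Applying $\vf$ to the sums $e_i+e_j$ yields $\vf(e_i)\vf(e_j)+\vf(e_j)\vf(e_i)=0$; when $\ch(F)\ne 2$ this, together with the behavior of $\vf$ on the interval and arrow idempotents, upgrades to genuine orthogonality $\vf(e_i)\vf(e_j)=\vf(e_j)\vf(e_i)=0$, and after composing $\vf$ with an inner automorphism one may assume $\vf(e_i)=e_{\pi(i)}$ for a bijection $\pi$ of $X$ which the arrow and triangle idempotents force to be an automorphism or an anti-automorphism of the poset. When $\ch(F)=2$ the orthogonality upgrade may fail on part of $X$: the weaker relation $\vf(e_i)\vf(e_j)+\vf(e_j)\vf(e_i)=0$ allows $\vf$ to behave in an automorphism-like way on some comparabilities and in an anti-automorphism-like way on others, and the pattern is exactly what is recorded by the datum $(\0,\sg,c)$ of an elementary Lie automorphism.

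\textbf{The forward implication: off-diagonal part and conclusion.} With the diagonal normalized, one studies $\vf$ on the Peirce pieces $e_iI(X,F)e_j$, each of which is $0$ or the line $Fe_{ij}$. Pushing the arrow and triangle idempotents through $\vf$ shows that $\vf(e_{ij})=\lb_{ij}e_{\pi(i)\pi(j)}$ with scalars obeying the cocycle relation $\lb_{ik}=\lb_{ij}\lb_{jk}$ (over $\Z_2$ one must also allow an additional central summand $\sg(e_{ij})\cdot 1$). When $\ch(F)\ne 2$, these scalars define a multiplicative automorphism, and assembling it with the inner automorphism of the previous step and the (anti)automorphism induced by $\pi$ exhibits $\vf$ as an automorphism or anti-automorphism of $I(X,F)$; together with the known structure of $\Aut I(X,F)$ this proves (i). When $\ch(F)=2$, the local automorphism/anti-automorphism choices recorded by $\0,\sg$ and the residual scalars $c_i\in\{0,1\}$ assemble precisely into $\psi\circ\tau_{\0,\sg,c}$, proving (ii). Over $\Z_2$ all multiplicative scalars are trivial, but the central corrections $\sg(e_{ij})\cdot 1$ survive; the functional $\sg$ they define is a bijective shift map, and factoring it out leaves a Lie automorphism, proving (iii).

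\textbf{Main obstacle.} The genuinely hard part is the characteristic-$2$ bookkeeping in the last two paragraphs: showing that the only idempotent-preserving ways of interleaving automorphism- and anti-automorphism-type behavior over $X$ are the tightly constrained maps $\psi\circ\tau_{\0,\sg,c}$ with $c_i\in\{0,1\}$, and, over $\Z_2$, that the remaining obstruction to being a Lie automorphism is a single shift map and nothing further. This rests on the structure theory of Lie automorphisms of incidence algebras. Two recurring technical points are to verify that $\vf$ preserves idempotents in both directions (equivalently that $\vf^{-1}$ is also an idempotent preserver), which for finite $F$ is a counting argument and in general comes out of the support analysis, and to dispatch degenerate posets such as chains with at most two elements separately.
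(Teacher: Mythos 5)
Your overall architecture for part (i) matches the paper's: use explicit idempotents built from the standard basis to force Jordan-type relations, normalize the diagonal idempotents by an inner automorphism, show the induced permutation of $X$ is an order (anti)automorphism, and finish with a multiplicative automorphism. That part is essentially sound, though you never invoke connectedness of $X$, which is indispensable for the \emph{global} automorphism-versus-anti-automorphism dichotomy (the paper's walk argument in the proof that $\lb$ is an (anti)automorphism is exactly where connectedness enters; without it $\vf$ could act as an automorphism on one component and an anti-automorphism on another).

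The genuine gaps are in parts (ii) and (iii), where your argument is asserted rather than carried out. For (ii), the crux is to show that a bijective idempotent preserver in characteristic $2$ with $|F|>2$ still preserves the Jordan (= Lie) product: the characteristic-$\ne 2$ trick of subtracting the relations coming from $e_x+e_{xv}$ and $e_x-e_{xv}$ collapses, and the paper replaces it by evaluating on $e_x+r\,e_{xv}$ for \emph{two distinct nonzero} scalars $r$ and solving the resulting $2\times 2$ Vandermonde system. Your sketch uses arrow idempotents $e_i+\lb e_{ij}$ but never exploits the freedom in $\lb$, so you have no mechanism to separate $\vf(e_x)\circ\vf(e_{xv})-\vf(e_{xv})$ from $\vf(e_{xv})^2$; after that, the identification of the idempotent-preserving Lie automorphisms as exactly those $\psi\circ\tau_{\0,\sg,c}$ with $c_i\in\{0,1\}$ requires the classification of Lie automorphisms from the cited literature, which you acknowledge only in passing. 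For (iii) the gap is worse: over $\Z_2$ the only available scalar is $1$, so neither the subtraction trick nor the Vandermonde trick applies, and your claim that ``pushing the arrow and triangle idempotents through $\vf$'' yields $\vf(e_{ij})=\lb_{ij}e_{\pi(i)\pi(j)}$ plus a central summand has no proof. The paper's route is entirely different here: it shows $\vf$ maps commuting idempotents to commuting idempotents, diagonalizes the images of the $e_x$, computes the centralizers $C(e_A)$ for all $A\supseteq\{x,y\}$ together with their intersection $D(X,\Z_2)\oplus\gen{e_{xy}}$, and uses a dimension count to pin $\vf(e_{xy})$ into $\gen{\0(e_{xy})}\oplus D(X,\Z_2)$ before proving the diagonal error term is central. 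Some substitute for this centralizer analysis is needed; without it the $\Z_2$ case is unproved. Finally, your ``$\Leftarrow$'' in (iii) silently assumes that every Lie automorphism of $I(X,\Z_2)$ preserves idempotents; this is true, but only because over $\Z_2$ the parameters $c_i$ are automatically in $\{0,1\}$ — it is false over larger fields of characteristic $2$ (the paper exhibits a proper Lie automorphism that is not an idempotent preserver), so this step also needs the structure theory rather than the bare Lie-product-preserving property.
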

	Based on the description of idempotent preservers, we characterize $k$-potent preservers of $I(X,F)$ for $k\ge 3$, but under certain hypotheses on $F$.
	
	More precisely, our paper is organized as follows. In \cref{sec-prelim} we give definitions, fix notations, recall some well-known facts and prove two elementary statements. In \cref{sec-2-torsion-free} we deal with the easy case of algebras over $2$-torsion-free rings. We adapt the proofs of~\cite[Theorem 5]{Molnar-Semrl98} and~\cite[Theorem 2.1]{Bresar-Semrl93a} to incidence algebras and show in \cref{Idemp-pres-is-Jordan-homo} that any idempotent preserver on $I(X,R)$ is a Jordan homomorphism. We start  \cref{sec-char(F)=2-and-|F|>2} by observing in \cref{Idemp-pres-over-field-with>2-elem} that \cref{Idemp-pres-is-Jordan-homo} also holds over fields of characteristic $2$ with more than $2$ elements. In the latter case the description of bijective linear idempotent preservers of $I(X,F)$ reduces to the description of idempotent-preserving elementary Lie automorphisms~\cite{FKS} of $I(X,F)$, which is accomplished in \cref{tau_0_sg_c-idemp-pres<=>c_i=0-or-1}. In \cref{sec-Jordan-auto} we characterize Jordan automorphisms of the incidence algebra $I(X,F)$ of a finite connected poset $X$ over a field $F$ (see \cref{Jord-auto-is-auto-or-anti-auto}), which together with the result of \cref{sec-2-torsion-free} gives the description of bijective linear idempotent preservers of $I(X,F)$, whenever $\ch(F)\ne 2$ (see \cref{bijective-idemp-pres-is-auto-or-anti-auto}). We then proceed in \cref{sec-I(X_Z_2)} with the remaining and the most difficult case $F=\Z_2$. It turns out that bijective linear idempotent preservers of $I(X,\Z_2)$ are exactly Lie automorphisms of $I(X,\Z_2)$ composed with bijective shift maps, as \cref{idemp-pres-of-I(X_Z_2)} states. We apply in \cref{sec-trip-pres} our description of idempotent preservers of $I(X,F)$ to characterize tripotent preservers of $I(X,F)$ provided that $\ch(F)\not\in\{2,3\}$ (see \cref{trip-pres-description}). The main trick here is the new associative product $\bullet_u$ on the image of a tripotent preserver $\vf$ which makes $\vf$ an idempotent preserver with respect to $\bullet_u$. The same idea is used in \cref{sec-k-potent-pres} for $k$-potent preservers of $I(X,F)$ with $k\ge 3$, however we chose to consider $k=3$ in \cref{sec-trip-pres} separately, since its intermediate results (\cref{p trip pres is triple homo,l T(1) unitary}) are themselves interesting and do not generalize to $k>3$. The final fact (\cref{k-potent-pres-description}), though, is a generalization of \cref{trip-pres-description} proved in \cref{sec-trip-pres} for tripotent preservers. In Appendix we give a description of $k$-potents of $I(X,F)$, which is an interesting related result that was not used in the rest of the paper.  
	
	\section{Preliminaries}\label{sec-prelim}
	
	\subsection{Rings and algebras}
	An element $a$ of a ring $R$ is called an {\it idempotent} (resp. \textit{tripotent}), if $a^2=a$ (resp. $a^3=a
	$). We denote by $E(R)$ (resp. $T(R)$) the set of idempotents (resp. tripotents) of $R$. More generally, given an integer $k\ge 2$, if $a^k=a$, then $a$ is called a {\it $k$-potent}. The set of $k$-potents of $R$ will be denoted by $P_k(R)$. For any $A\subseteq R$ we set $E(A):=E(R)\cap A$. 
	
	The {\it centralizer} of $A\sst R$ in $R$ is $C_R(A)=\{r\in R\mid ar=ra\text{ for all }a\in A\}$. If $R$ is associative, then $C_R(A)$ is a subring of $R$. In particular, the {\it center of $R$} is $C(R):=C_R(R)$. 
	
	Let $n$ be a positive integer. A ring $R$ is said to be {\it $n$-torsion-free} if $nr=0$ holds only for $r=0$. A ring $R$ {\it has characteristic $n$} ($\ch(R)=n$) if $nr=0$ for all $r\in R$, and $n$ is the smallest number with this property. If such a number $n$ does not exist, then $\ch(R):=0$. Clearly, $\ch(R)=n$ implies $R$ is not $n$-torsion-free, and moreover, any $R$-algebra is not $n$-torsion-free. If $F$ is a field, then $F$ is $n$-torsion-free $\iff$ any $F$-algebra is $n$-torsion-free $\iff$ $\ch(F)\nmid n$.
	
	We say that $a,b\in R$ are {\it orthogonal} if $ab=ba=0$. An idempotent $e\in E(R)$ is {\it primitive} if it cannot be written as a sum of two non-zero orthogonal idempotents.
	
	\subsection{Elementary linear algebra}
	
	The following two facts might be well-known, but we could not find an appropriate reference.
	
	\begin{lem}\label{distr-direct-sum}
		Let $M$ be an $R$-module, $N_i\sst M$, $i\in I$, a family of $R$-submodules of $M$ and $L\sst M$ an $R$-submodule of $M$ such that $L\cap\left(\sum_{i\in I}N_i\right)=\{0\}$. Then $\bigcap_{i\in I}(L\oplus N_i)=L\oplus\left(\bigcap_{i\in I}N_i\right)$.
	\end{lem}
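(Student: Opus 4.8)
The plan is to prove the two inclusions separately, the first being immediate and the second resting on a single observation about uniqueness of direct-sum components.

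First I would record that all the direct sums appearing in the statement are legitimate. Since $L\cap N_i\sst L\cap\left(\sum_{j\in I}N_j\right)=\{0\}$ for every $i$, each sum $L+N_i$ is a genuine direct sum $L\oplus N_i$; likewise $L\cap\left(\bigcap_{i\in I}N_i\right)\sst L\cap\left(\sum_{i\in I}N_i\right)=\{0\}$, so $L\oplus\left(\bigcap_{i\in I}N_i\right)$ is well defined. (I tacitly assume $I\ne\emptyset$, which holds in the intended applications.) The inclusion $\supseteq$ is then trivial: $L\oplus\left(\bigcap_{i\in I}N_i\right)\sst L\oplus N_i$ for each $i\in I$, so the left-hand side is contained in $\bigcap_{i\in I}(L\oplus N_i)$.

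For the inclusion $\sst$, I would take $x\in\bigcap_{i\in I}(L\oplus N_i)$ and, for each $i$, write $x=l_i+n_i$ with $l_i\in L$ and $n_i\in N_i$. The crux is that all the $l_i$ coincide: for any $i,j\in I$ one has $l_i-l_j=n_j-n_i\in L\cap(N_i+N_j)\sst L\cap\left(\sum_{k\in I}N_k\right)=\{0\}$, hence $l_i=l_j$ and $n_i=n_j$. Denoting the common values by $l\in L$ and $n$, we get $n=n_i\in N_i$ for every $i$, i.e. $n\in\bigcap_{i\in I}N_i$, so $x=l+n\in L\oplus\left(\bigcap_{i\in I}N_i\right)$, as desired.

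There is essentially no serious obstacle here; the only point requiring care is the uniqueness-of-components step, and it is worth noting that it uses the full hypothesis $L\cap\left(\sum_{i\in I}N_i\right)=\{0\}$ — the weaker fact that each individual $L\oplus N_i$ is direct would not suffice to force the $l_i$ to agree.
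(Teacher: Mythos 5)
Your proof is correct and follows essentially the same route as the paper's: both establish the easy inclusion directly and obtain the other by writing an element of the intersection as $l_i+n_i$ and using $l_i-l_j=n_j-n_i\in L\cap\left(\sum_{i\in I}N_i\right)=\{0\}$ to force all components to coincide. Your additional remarks on why the direct sums are well defined and on the necessity of the full hypothesis are accurate but do not change the argument.
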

	\begin{proof}
		The direct sums make sense because $L\cap\left(\sum_{i\in I}N_i\right)=\{0\}$. Moreover, the inclusion $L\oplus\left(\bigcap_{i\in I}N_i\right)\sst\bigcap_{i\in I}(L\oplus N_i)$ is obvious. For the converse inclusion take $m\in \bigcap_{i\in I}(L\oplus N_i)$. Then $m=l_i+n_i$, where $l_i\in L$ and $n_i\in N_i$, $i\in I$. For all $i,j\in I$ we have $l_i-l_j=n_j-n_i\in L\cap\left(\sum_{i\in I}N_i\right)$, so $l_i=l_j$ and $n_i=n_j$. It follows that $m=l+n$, where $l\in L$ and $n\in\bigcap_{i\in I}N_i$. Thus, $m\in L\oplus\left(\bigcap_{i\in I}N_i\right)$.
	\end{proof}
	
	For an $R$-module $M$ and a subset $X\sst M$ we denote by $\gen{X}$ the $R$-submodule of $M$ generated by $X$, where $\gen{\emptyset}=\{0\}$. A subset $X\sst M$ is said to be {\it linearly independent} (over $R$) if for any $\{x_i\}_{i=1}^n\sst X$ and $\{r_i\}_{i=1}^n\sst R$ it follows from $\sum_{i=1}^nr_ix_i=0$ that $r_i=0$ for all $1\le i\le n$.
	
	\begin{lem}\label{inter-spans}
		Let $M$ be an $R$-module and $X\sst M$ linearly independent. Then for all $A,B\sst X$ we have $\gen{A}\cap\gen{B}=\gen{A\cap B}$.
	\end{lem}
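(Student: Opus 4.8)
The plan is to establish the two inclusions separately, the nontrivial one being $\gen{A}\cap\gen{B}\subseteq\gen{A\cap B}$. The inclusion $\supseteq$ is immediate: since $A\cap B\sst A$ and $A\cap B\sst B$, we get $\gen{A\cap B}\sst\gen{A}$ and $\gen{A\cap B}\sst\gen{B}$, hence $\gen{A\cap B}\sst\gen{A}\cap\gen{B}$.

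For the reverse inclusion I would take $m\in\gen{A}\cap\gen{B}$ and write it in two ways as finite $R$-linear combinations, $m=\sum_{a\in A'}r_aa=\sum_{b\in B'}s_bb$, where $A'\sst A$ and $B'\sst B$ are finite and, after discarding zero-coefficient terms, all $r_a$ and all $s_b$ are nonzero. Subtracting the two expressions yields a relation $\sum_{x\in A'\cup B'}t_xx=0$ over the finite set $A'\cup B'\sst X$, where $t_x=r_x$ for $x\in A'\setminus B'$, $t_x=-s_x$ for $x\in B'\setminus A'$, and $t_x=r_x-s_x$ for $x\in A'\cap B'$. Since $X$ is linearly independent, all coefficients $t_x$ vanish; in particular, for $x\in A'\setminus B'$ this forces $r_x=0$, contradicting the choice of the $r_a$. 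Therefore $A'\sst B'\sst B$, so $A'\sst A\cap B$, and hence $m=\sum_{a\in A'}r_aa\in\gen{A\cap B}$.

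This argument is essentially routine; there is no genuine obstacle. The only point requiring a little care is the bookkeeping of which basis elements occur in both sums (so that the coefficient of an element of $A'\setminus B'$ in the combined relation is exactly $r_x$, with no cancellation), together with the observation that it is legitimate to assume all coefficients are nonzero, which uses that elements of $\gen{A}$ and $\gen{B}$ are by definition \emph{finite} $R$-linear combinations of \emph{distinct} elements of $A$ and $B$, respectively. The degenerate case $A\cap B=\emptyset$ is covered automatically: the convention $\gen{\emptyset}=\{0\}$ makes the displayed equality read $\gen{A}\cap\gen{B}=\{0\}$, and the above reasoning gives $A'\sst A\cap B=\emptyset$, forcing $m=0$.
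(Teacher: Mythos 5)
Your proof is correct and follows essentially the same route as the paper's: write $m$ as two finite linear combinations, subtract, and invoke the linear independence of $X$ to kill the coefficients of elements outside the overlap. The paper phrases this directly (concluding those coefficients vanish and keeping only the common terms) rather than by contradiction, but the argument is the same.
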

	\begin{proof}
		The inclusion $\gen{A\cap B}\sst\gen{A}\cap\gen{B}$ is obvious. Let now $m\in \gen{A}\cap\gen{B}$. Then $m=\sum_{i=1}^nr_ia_i=\sum_{j=1}^ks_jb_j$ for some $\{a_i\}_{i=1}^n\sst A$, $\{b_j\}_{j=1}^k\sst B$ and $\{r_i\}_{i=1}^n,\{s_j\}_{j=1}^k\sst R$. If $\{a_i\}_{i=1}^m\cap\{b_j\}_{j=1}^k=\emptyset$, then $\sum_{i=1}^nr_ia_i-\sum_{j=1}^ks_jb_j=0$ implies $r_i=0$ for all $1\le i\le n$ and $s_j=0$ for all $1\le j\le k$ by the linear independence of $X$. Hence, $m=0\in\gen{A\cap B}$. Let $\{a_i\}_{i=1}^n\cap\{b_j\}_{j=1}^k\ne\emptyset$ and assume that $\{a_i\}_{i=1}^n$ and $\{b_j\}_{j=1}^k$ are enumerated in a way that $a_i=b_i$ for all $1\le i\le l$ and $\{a_i\}_{i=l+1}^n\cap\{b_j\}_{j=l+1}^k=\emptyset$ (in particular, $\{a_i\}_{i=l+1}^n$ or $\{b_j\}_{j=l+1}^k$ or both of them may be empty). Then $\sum_{i=1}^l(r_i-s_i)a_i+\sum_{i=l+1}^nr_ia_i-\sum_{j=l+1}^ks_jb_j=0$, so $r_i=s_i$ for all $1\le i\le l$, $r_i=0$ for all $l+1\le i\le n$ and $s_j=0$ for all $l+1\le j\le k$ in view of the linear independence of $X$. Thus, $m=\sum_{i=1}^lr_ia_i\in\gen{A\cap B}$.
	\end{proof}
	\subsection{Potent preservers}
	
	Let $A$ and $B$ be algebras over a commutative unital ring $R$. We say that an $R$-linear map $\vf:A\to B$ \textit{preserves idempotents (resp. tripotents)} (or is an {\it idempotent (resp. tripotent) preserver}) if $\vf(E(A))\sst E(B)$ (resp. $\vf(T(A))\sst T(B)$). More generally, $\vf$ is a {\it $k$-potent preserver} for some integer $k\ge 2$, if $\vf(P_k(A))\sst P_k(B)$. An idempotent preserver is {\it strong} if $\vf(A\setminus E(A))\sst B\setminus E(B)$. Clearly, any homomorphism or anti-homomorphism $A\to B$ preserves $k$-potents for all $k\ge 2$. Moreover, isomorphisms and anti-isomorphisms are strong idempotent preservers.
	
	\subsection{Jordan maps}
	A \textit{Jordan homomorphism}~\cite{Jacobson-Rickart50} between associative algebras $A\to B$ is an $R$-linear map $\vf:A\to B$ satisfying
	\begin{align}
	\vf(a^2)&=\vf(a)^2,\label{vf(a^2)}\\
	\vf(aba)&=\vf(a)\vf(b)\vf(a)\label{vf(aba)}
	\end{align}
	for all $a,b\in A$. It follows from \cref{vf(aba)} that
	\begin{align}\label{vf(abc+cba)}
	\vf(abc+cba)=\vf(a)\vf(b)\vf(c)+\vf(c)\vf(b)\vf(a)
	\end{align}
	for all $a,b,c\in A$.
	Observe that \cref{vf(a^2)} implies	
	\begin{align}\label{vf(ab+ba)}
	\vf(ab+ba)=\vf(a)\vf(b)+\vf(b)\vf(a)
	\end{align}
	for all $a,b\in A$, i.e. $\vf$ preserves the {\it Jordan product} $a\circ b=ab+ba$. If $B$ is $2$-torsion-free, then \cref{vf(ab+ba)} implies \cref{vf(a^2),vf(aba)}, so Jordan homomorphisms are precisely the linear maps preserving the Jordan product. However, for instance if $\ch(R)=2$, there could exist maps preserving the Jordan product, but not preserving the squares, thus we need to distinguish between the two notions in this setting. We shall use the term {\it Lie homomorphism} for a map preserving the Jordan product in the case $\ch(R)=2$, because then $a\circ b$ coincides with the {\it Lie product} $[a,b]=ab-ba$.
	
	Any Jordan homomorphism is an idempotent preserver by \cref{vf(a^2)}. If only \cref{vf(aba)} holds for all $a,b\in A$, then $\varphi$ is called a \emph{Jordan triple homomorphism}. Any Jordan triple homomorphism is a tripotent preserver.
	
	
	
	
	\subsection{Posets}
	
	A \textit{poset} is a set with a partial order (i.e., a reflexive, transitive and anti-symmetric binary relation) $\le$ on it. Given $x,y\in X$, the subset $\lf x,y\rf :=\{z\in X \mid x\le z\le y\}$ is called the \emph{interval} from $x$ to $y$. A poset $X$ is said to be \textit{locally finite} if all $\lf x,y\rf\sst X$ are finite. A \textit{chain} in $X$ is a non-empty subset $C\sst X$ such that $x\le y$ or $y\le x$ for all $x,y\in C$. The \textit{length} of a finite chain $C\sst X$ is defined to be $|C|-1$. The \textit{length} of a non-empty poset $X$, denoted by $l(X)$, is the maximum length of chains $C\sst X$ (if it exists). Obviously, any finite poset $X$ has length $l(X)\le |X|-1$. A {\it walk} in $X$ is a sequence $x_0,x_1,\dots,x_m$ of elements of $X$, such that for all $i=0,\dots,m-1$ either $x_i\le x_{i+1}$ and $l(\lf x_i,x_{i+1}\rf)=1$ or $x_{i+1}\le x_i$ and $l(\lf x_{i+1},x_i\rf)=1$. A poset $X$ is {\it connected} if for all $x,y\in X$ there exists a walk $x=x_0,\dots,x_m=y$. 
	
	A map $\lb:X\to Y$ between two posets is {\it order-preserving} (resp. {\it order-reversing}) whenever $x\le y\impl\lb(x)\le\lb(y)$ (resp. $x\le y\impl\lb(y)\le\lb(x)$) for all $x,y\in X$. An {\it (order) automorphism} (resp. {\it anti-automorphism}) of a poset $X$ is an order-preserving (resp. order-reversing) bijection of $X$ whose inverse is also order-preserving (resp. order-reversing).
	
	\subsection{Incidence algebras}
	
	The \emph{incidence algebra} $I(X,R)$ of a locally finite poset $X$ over a commutative unital ring $R$ is the $R$-module 
	$I(X,R)=\{f:X\times X\to R \mid f(x,y)=0 \text{ if } x\nleq y\}$ with the convolution product
	$$
	(fg)(x,y)=\sum_{x\leq z\leq y}f(x,z)g(z,y),
	$$
	where $f,g\in I(X,K)$. Under this operation $I(X,R)$ is an associative $R$-algebra with identity $\delta$ given by
	\begin{align*}
	\delta(x,y)=
	\begin{cases}
	1, & x=y,\\
	0, & x\ne y.
	\end{cases}
	\end{align*}
	The \textit{diagonal} of $f\in I(X,R)$ is $f_D\in I(X,R)$ such that
	\begin{align*}
	f_D(x,y)=
	\begin{cases}
	f(x,y), & x=y,\\
	0, & x\ne y.
	\end{cases}
	\end{align*}
	An element $f\in I(X,R)$ said to be \textit{diagonal} whenever $f=f_D$. The diagonal elements form a commutative subalgebra of $I(X,R)$, which we denote by $D(X,R)$. Invertible elements of $I(X,R)$ are exactly those $f\in I(X,R)$ such that $f(x,x)\in R^*$ for all $x\in X$ by \cite[Theorem 1.2.3]{SpDo}. The {\it Jacobson radical} of $I(X,R)$, denoted $J(I(X,R))$, consists of $f\in I(X,R)$ such that $f(x,x)\in J(R)$ for all $x\in X$ (see~\cite[Theorem 4.2.5]{SpDo}). In particular, if $F$ is a field, then $f\in J(I(X,F))$ if and only if $f_D=0$. In this case $I(X,F)=D(X,F)\oplus J(I(X,F))$ as vector spaces. The center of $I(X,R)$ coincides with $\{r\dl\mid r\in R\}$ by \cite[Corollary 1.3.15]{SpDo}, provided that $X$ is connected.
	
	When $X$ is finite, $I(X,R)$ is a free $R$-module over $\{e_{xy}\mid x\leq y\}$, where
	\begin{align*}
	e_{xy}(u,v)=
	\begin{cases}
	1, & (u,v)=(x,y),\\
	0, & (u,v)\ne(x,y).
	\end{cases}
	\end{align*} 
	The set $\{e_{xy}\mid x\leq y\}$ will be called \textit{the standard basis} of $I(X,R)$. Observe that $e_{xy}e_{uv}=\delta(y,u)e_{xv}$ and $f=\sum_{x\le y}f(x,y)e_{xy}$ for any $f\in I(X,R)$. If $R$ is a field, then $\B:=\{e_{xy}\mid x<y\}$ is a basis of $J(I(X,R))$. We will write $e_{x}:=e_{xx}$, and more generally, $e_A=\sum_{a\in A}e_{aa}$ for $A\sst X$. Then $\{e_x\mid x\in X\}$ is a set of pairwise orthogonal idempotents of $I(X,R)$ which form a basis of $D(X,R)$. If $R$ is indecomposable, i.e., has no idempotents except $0$ and $1$, then all idempotents $e_x$ are primitive.
	
	If $F$ is a field, then it is known~\cite{Stanley,Baclawski72,BruL,BFS12} that any (anti-)automorphism of $I(X,F)$ is the composition of an inner automorphism, the (anti-)automorphism $\widehat\lb$ induced by an order (anti-)automorphism $\lb$ of $X$ and the automorphism $M_\sg$ induced by a {\it multiplicative} element $\sg\in I(X,F)$. The latter means that $\sg(x,y)\in F^*$ for all $x\le y$ and $\sg(x,y)\sg(y,z)=\sg(x,z)$ for all $x\le y\le z$. The corresponding automorphism, acting as $M_\sg(f)(x,y)=\sg(x,y)f(x,y)$ for all $f\in I(X,F)$, is also called {\it multiplicative}.

	\section{Idempotent preservers on $I(X,R)$ in the $2$-torsion-free case}\label{sec-2-torsion-free}
	
	We start with a result whose proof mimics the proof of~\cite[Theorem 5]{Molnar-Semrl98} (see also~\cite[Theorem 2.1]{Bresar-Semrl93a}). 
	
	\begin{prop}\label{Idemp-pres-is-Jordan-homo}
		Let $X$ be finite and $R$ be $2$-torsion-free. Then any idempotent preserver on $I(X,R)$ with values in an associative $R$-algebra preserves the Jordan product.
	\end{prop}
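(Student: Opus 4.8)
The plan is to reduce the identity $\vf(ab+ba)=\vf(a)\vf(b)+\vf(b)\vf(a)$ to the case where $a$ and $b$ run through the standard basis $\{e_{xy}\}$ — legitimate since both sides are $R$-bilinear — and then to extract it, pair by pair, from the idempotency of a short list of explicit elements of $I(X,R)$. Throughout one uses that $R$, hence the codomain, is $2$-torsion-free, and that $\vf$ takes idempotents to idempotents. The basic lemma I would prove first: \emph{if $e\in I(X,R)$ is an idempotent and $n\in I(X,R)$ satisfies $en=n$, $ne=0$ (or the mirror condition $ne=n$, $en=0$), then $n^2=0$, $\vf(n)^2=0$ and $\vf(e)\vf(n)+\vf(n)\vf(e)=\vf(n)$}. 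Indeed $e+n$ and $e-n$ are then idempotents; expanding the two relations $\vf(e\pm n)^2=\vf(e\pm n)$ and adding them cancels the cross terms and leaves $2\vf(n)^2=0$, so $\vf(n)^2=0$, and now either relation yields the last equality. In particular $\vf(e_{xy})^2=0$ for $x<y$ (take $e=e_x$, $n=e_{xy}$), and for orthogonal idempotents $p,q$ one gets $\vf(p)\vf(q)+\vf(q)\vf(p)=0$ by expanding $\vf(p+q)^2=\vf(p+q)$.

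Next I would assemble a short catalogue of idempotents of $I(X,R)$ of the shape $e_A+n$, with $A$ a one- or two-point subset of $X$ and $n$ a sum of standard basis elements; here $(e_A+n)^2=e_A+n$ simply means $e_An+ne_A+n^2=n$, which is routine to check in each instance. The ones that matter are $e_x+e_{xy}$ together with its mirrors $e_y+e_{xy}$ (these feed the basic lemma), $e_x+(e_{xy}+e_{xv})$, $e_v+(e_{xv}+e_{yv})$, $e_z+e_u+e_{uv}$, $e_{\{x,u\}}+e_{xy}+e_{uv}$, $e_x+e_y+e_{xv}$ and their analogues, and — the decisive one — the \emph{linked} idempotent $g:=e_y+e_{xy}+e_{yv}+e_{xv}$, which is idempotent because $e_y(e_{xy}+e_{yv}+e_{xv})=e_{yv}$, $(e_{xy}+e_{yv}+e_{xv})e_y=e_{xy}$ and $(e_{xy}+e_{yv}+e_{xv})^2=e_{xv}$ add up correctly to give $g^2=g$. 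Applying $\vf$ to these idempotents, expanding the idempotency relation and feeding in the basic lemma, one expresses each product $\vf(e_{xy})\vf(e_{uv})$ and $\vf(e_z)\vf(e_{uv})$ in terms of $\vf$ of standard basis elements.

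Then comes the case analysis over pairs of standard basis elements. Diagonal$\times$diagonal: $\vf(2e_x)=2\vf(e_x)=2\vf(e_x)^2$ when the indices coincide, and the expansion of $\vf(e_x+e_y)^2$ otherwise. Diagonal$\times$off-diagonal $e_z\circ e_{uv}$: if $z\in\{u,v\}$ this is exactly the basic lemma; if $z\notin\{u,v\}$, the lemma applied to $e_z+e_u+e_{uv}$, after cancelling the $e_u$-contribution by means of $e_u\circ e_{uv}=e_{uv}$, gives $\vf(e_z)\vf(e_{uv})+\vf(e_{uv})\vf(e_z)=0=\vf(e_z\circ e_{uv})$. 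Off-diagonal$\times$off-diagonal $e_{xy}\circ e_{uv}$: here $e_{xy}e_{uv}+e_{uv}e_{xy}$ is $e_{xv}$ if $y=u$, $e_{uy}$ if $v=x$, and $0$ in all other cases, and ``$y=u$'' and ``$v=x$'' cannot hold simultaneously because $X$ is a poset. In the zero sub-cases, $e_{\{x,u\}}+e_{xy}+e_{uv}$ is an idempotent with nilpotent part $e_{xy}+e_{uv}$, so the basic lemma gives $\vf(e_{xy}+e_{uv})^2=0$, and since $\vf(e_{xy})^2=\vf(e_{uv})^2=0$ this forces $\vf(e_{xy})\vf(e_{uv})+\vf(e_{uv})\vf(e_{xy})=0$. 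In the sub-case $y=u$, where $e_{xy}\circ e_{yv}=e_{xv}$, I would apply $\vf$ to the linked idempotent $g$: expanding $\vf(g)^2=\vf(g)$ with all previously obtained relations substituted in, every term is known except $\vf(e_{xy})\vf(e_{yv})+\vf(e_{yv})\vf(e_{xy})$, which is thereby forced to equal $\vf(e_{xv})$; the mirror sub-case $v=x$ uses $e_x+e_{ux}+e_{xy}+e_{uy}$ in the same way.

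The only genuinely delicate point is this ``composable'' sub-case. For every other pair of units the needed product is isolated at once — either $e_{xy}+e_{uv}$ is the nilpotent part of one of the catalogue idempotents, or one of the three-term idempotents directly separates it — but for $e_{xy}$ and $e_{yv}$ one has to discover the four-term idempotent $g$, and the argument only closes up if all the square-zero and ``orthogonality'' relations for the non-composable pairs are already in hand when $\vf(g)^2$ is expanded, so the steps must be carried out in that order. The rest is routine bookkeeping. Two-torsion-freeness of the codomain is used essentially everywhere, and the statement is genuinely false in characteristic $2$ — which is precisely why the paper must treat that case by separate means in the sections that follow.
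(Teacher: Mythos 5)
Your proposal is correct and follows essentially the same route as the paper: reduce to the standard basis, extract $\vf(e_{xy})^2=0$ and $\vf(e_x)\circ\vf(e_{xy})=\vf(e_{xy})$ from the pair of idempotents $e_x\pm e_{xy}$ (the only place $2$-torsion-freeness enters), and then peel off the remaining Jordan products from the same catalogue of idempotents the paper uses, ending with the four-term idempotent $e_y+e_{xy}+e_{yv}+e_{xv}$ for the composable case. Packaging the $e\pm n$ computation as a standalone lemma is a tidy organizational touch but not a different argument.
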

	\begin{proof}
		Let $\vf$ be an idempotent preserver $I(X,R)\to B$, where $B$ is an associative algebra.
		We will verify \cref{vf(ab+ba)} on the elements of the standard basis of $I(X,R)$. Thus, let $f=e_{xy}$ and $g=e_{uv}$.
		
		\textit{Case 1.} Both $f$ and $g$ are diagonal.
		
		\textit{Case 1.1.} $x=y=u=v$. This case is trivial, because $f=g=e_x$ is an idempotent.
		
		\textit{Case 1.2.} $x=y\ne u=v$. Then $f$ and $g$ are orthogonal idempotents, so $f+g$ is an idempotent. It follows from $\vf(f+g)^2=\vf(f+g)$ that $\vf(f)\vf(g)+\vf(g)\vf(f)=0$.
		
		\textit{Case 2.} Exactly one of the elements $f$ and $g$ is diagonal.
		
		\textit{Case 2.1.} $x=y=u<v$. Then $fg=e_{xv}$ and $gf=0$. Observe that $e_x+e_{xv}$ is an idempotent, so
		\begin{align}\label{vf(e_x)vf(e_xv)+vf(e_xv)vf(e_x)+vf(e_xv)^2=vf(e_xv)}
		\vf(e_x)\vf(e_{xv})+\vf(e_{xv})\vf(e_x)+\vf(e_{xv})^2=\vf(e_{xv}).
		\end{align}
		Similarly, since $e_x-e_{xv}$ is an idempotent, we obtain
		\begin{align}\label{-vf(e_x)vf(e_xv)-vf(e_xv)vf(e_x)+vf(e_xv)^2=-vf(e_xv)}
		-\vf(e_x)\vf(e_{xv})-\vf(e_{xv})\vf(e_x)+\vf(e_{xv})^2=-\vf(e_{xv}).
		\end{align}
		Combining \cref{vf(e_x)vf(e_xv)+vf(e_xv)vf(e_x)+vf(e_xv)^2=vf(e_xv),-vf(e_x)vf(e_xv)-vf(e_xv)vf(e_x)+vf(e_xv)^2=-vf(e_xv)} and using the fact that $R$ is $2$-torsion-free, we conclude that $\vf(e_x)\vf(e_{xv})+\vf(e_{xv})\vf(e_x)=\vf(e_{xv})$, as desired. 
		
		\textit{Case 2.2.} $x<y=u=v$. This case is proved similarly to Case 2.1, using the idempotents $e_y+e_{xy}$ and $e_y-e_{xy}$.
		
		\textit{Case 2.3.} $x=y$, $u<v$ and $x\not\in\{u,v\}$. Then $fg=gf=0$. Since $e_x+e_u+e_{uv}$ is an idempotent, then $\vf(e_x+e_u+e_{uv})^2=\vf(e_x+e_u+e_{uv})$. Using the results of Cases 1.1, 1.2 and \cref{vf(e_x)vf(e_xv)+vf(e_xv)vf(e_x)+vf(e_xv)^2=vf(e_xv)}, we obtain $\vf(e_x)\vf(e_{uv})+\vf(e_{uv})\vf(e_x)=0$, as desired.
		
		\textit{Case 2.4.} $x<y$, $u=v$ and $u\not\in\{x,y\}$. This case follows from Case 2.3 by commutativity of the Jordan product.
		
		\textit{Case 3.} Neither $f$ nor $g$ is diagonal.
		
		\textit{Case 3.1.} $x=u$ and $y=v$. Then $f=g=e_{xy}$, where $f^2=0$. Taking the idempotent $e_x+e_{xy}$, we obtain $\vf(e_{xy})^2=0$ as in Case 2.1.
		
		\textit{Case 3.2.} $x=u$ and $y\ne v$. Then $fg=gf=0$. Consider the idempotent $e_x+e_{xy}+e_{xv}$. It follows from $\vf(e_x+e_{xy}+e_{xv})^2=\vf(e_x+e_{xy}+e_{xv})$ and Cases 1.1, 2.1 and 3.1 that $\vf(e_{xy})\vf(e_{xv})+\vf(e_{xv})\vf(e_{xy})=0$, as desired.
		
		\textit{Case 3.3.} $y=v$ and $x\ne u$. This case is proved similarly to Case 3.2, using the idempotent $e_y+e_{xy}+e_{uy}$ and Cases 1.1, 2.2 and 3.1.
		
		\textit{Case 3.4.} $y=u$. Then $fg=e_{xv}$ and $gf=0$. Observe that $e_y+e_{xy}+e_{yv}+e_{xv}$ is an idempotent. So, it follows from $\vf(e_y+e_{xy}+e_{yv}+e_{xv})^2=\vf(e_y+e_{xy}+e_{yv}+e_{xv})$ and Cases 1.1, 2.1, 2.2, 2.3, 3.2 and 3.3 that $\vf(e_{xy})\vf(e_{yv})+\vf(e_{yv})\vf(e_{xy})=\vf(e_{xv})$, as desired.
		
		\textit{Case 3.5.} $x=v$. This case follows from Case 3.4 by commutativity of the Jordan product.
		
		\textit{Case 3.6.} $\{x,y\}\cap\{u,v\}=\emptyset$. Then $fg=gf=0$. Using the idempotent $e_x+e_u+e_{xy}+e_{uv}$ and Cases 1.1, 1.2, 2.1, 2.3, we get $\vf(e_{xy})\vf(e_{uv})+\vf(e_{uv})\vf(e_{xy})=0$, as desired.
	\end{proof}
	
	\section{Idempotent preservers of $I(X,F)$, where $\ch(F)=2$ and $|F|>2$}\label{sec-char(F)=2-and-|F|>2}
	
	\begin{rem}\label{Idemp-pres-over-field-with>2-elem}
		The result of \cref{Idemp-pres-is-Jordan-homo} remains valid if $R$ is a field (of an arbitrary characteristic) with $|R|>2$. Indeed, it only suffices to prove Cases 2.1 and 2.2 which used the $2$-torsion-free condition. For Case 2.1 observe that $e_x+re_{xv}$ is an idempotent for all $r\in R$. Therefore,
		\begin{align}\label{vf(e_x+re_xy)^2=vf(e_x+re_xy)}
		r(\vf(e_x)\vf(e_{xv})+\vf(e_{xv})\vf(e_x)-\vf(e_{xv}))+r^2\vf(e_{xv})^2=0
		\end{align}
		for arbitrary $r\in R$. Choose two distinct $r_1,r_2\in R^*$ (this is possible because $|R|>2$). Writing \cref{vf(e_x+re_xy)^2=vf(e_x+re_xy)} for $r_1$ and $r_2$ we obtain a homogeneous system of two linear equations in $\vf(e_x)\vf(e_{xv})+\vf(e_{xv})\vf(e_x)-\vf(e_{xv})$ and $\vf(e_{xv})^2$. Its determinant is $r_1r_2^2-r_1^2r_2=r_1r_2(r_2-r_1)\ne 0$. Hence it has only the trivial solution. Case 2.2 is proved similarly.
	\end{rem}
	
	\begin{cor}\label{idemp-pres-is-Lie-auto}
		Let $X$ be a finite poset and $F$ be a field with $\ch(F)=2$ and $|F|>2$. Then any bijective idempotent preserver $\vf:I(X,F)\to I(X,F)$ is a Lie automorphism of $I(X,F)$, so, whenever $X$ is connected, $\vf$ admits the description given in~\cite{FKS}.
	\end{cor}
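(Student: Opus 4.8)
The statement to prove is \cref{idemp-pres-is-Lie-auto}: over a field $F$ with $\ch(F)=2$ and $|F|>2$, every bijective idempotent preserver $\vf:I(X,F)\to I(X,F)$ is a Lie automorphism, i.e.\ preserves the Jordan (= Lie, in this characteristic) product.

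The plan is as follows. First I would invoke \cref{Idemp-pres-over-field-with>2-elem}, which already asserts that \cref{Idemp-pres-is-Jordan-homo} remains valid when $R$ is a field with more than two elements, regardless of characteristic. Applying this with $R=F$ (a field, $|F|>2$) and taking the target algebra $B$ to be $I(X,F)$ itself, we conclude immediately that the idempotent preserver $\vf$ preserves the Jordan product $a\circ b=ab+ba$.

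Next I would observe that because $\ch(F)=2$, the Jordan product $a\circ b=ab+ba$ coincides with the Lie product $[a,b]=ab-ba$ (the two differ by $2ba=0$), so $\vf$ preserves the Lie bracket: $\vf([a,b])=[\vf(a),\vf(b)]$ for all $a,b\in I(X,F)$. Since $\vf$ is in addition $F$-linear and bijective, this says exactly that $\vf$ is a Lie automorphism of $I(X,F)$.

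Finally, when $X$ is connected, the structure of Lie automorphisms of $I(X,F)$ is known — this is precisely the content of~\cite{FKS} — so $\vf$ admits the description given there. I do not expect any real obstacle here: the corollary is a direct packaging of \cref{Idemp-pres-over-field-with>2-elem} together with the elementary characteristic-$2$ identity relating the Jordan and Lie products, plus a citation. The only thing to be careful about is to make the trivial point that the hypotheses of \cref{Idemp-pres-is-Jordan-homo}/\cref{Idemp-pres-over-field-with>2-elem} are met: $X$ is finite (this must be part of the standing assumption of the corollary) and $F$ is a field with $|F|>2$, while bijectivity of $\vf$ is not needed for the Jordan-product conclusion but is what upgrades "Lie homomorphism" to "Lie automorphism."
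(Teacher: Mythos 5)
Your proposal is correct and follows exactly the route the paper intends: the corollary is a direct consequence of \cref{Idemp-pres-over-field-with>2-elem} (the extension of \cref{Idemp-pres-is-Jordan-homo} to fields with more than two elements), combined with the observation that in characteristic $2$ the Jordan product coincides with the Lie product, so a bijective linear Jordan-product preserver is a Lie automorphism, whose description for connected $X$ is given in~\cite{FKS}. Nothing further is needed.
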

	
	We are going to describe those Lie automorphisms of $I(X,F)$ which preserve the idempotents of $I(X,F)$.
	
	\begin{lem}\label{vf-idemp-pres<=>vf(e_x)-idemp}
		Let $X$ be a finite connected poset and $F$ a field with $\ch(F)=2$. For a Lie automorphism $\vf$ of $I(X,F)$ the following are equivalent:
		\begin{enumerate}
			\item $\vf$ preserves the squares;\label{vf-pres-squares}
			\item $\vf$ preserves the idempotents;\label{vf-pres-idempotents}
			\item $\vf(e_x)\in E(I(X,F))$ for all $x\in X$.\label{vf(e_x)-idempotent}
		\end{enumerate}
	\end{lem}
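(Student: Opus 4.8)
The plan is to prove the three conditions equivalent via the cycle $(i)\Rightarrow(ii)\Rightarrow(iii)\Rightarrow(i)$. The first two implications are immediate: if $\vf$ preserves squares then $\vf(e)^2=\vf(e^2)=\vf(e)$ for every $e\in E(I(X,F))$, and $(ii)\Rightarrow(iii)$ because each $e_x$ is an idempotent. All the content is in $(iii)\Rightarrow(i)$, where the main device is the ``square defect'' $\partial\colon I(X,F)\to I(X,F)$, $\partial(f):=\vf(f^2)-\vf(f)^2$. Expanding $(a+b)^2$ and using that $\vf$ preserves the Jordan product (the mixed terms cancel) shows that $\partial$ is additive, and clearly $\partial(\lambda f)=\lambda^2\partial(f)$ for $\lambda\in F$; hence $\partial\equiv 0$ as soon as $\partial$ vanishes on the standard basis $\{e_{xy}\mid x\le y\}$. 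Since $\partial(e_x)=\vf(e_x)-\vf(e_x)^2$ vanishes exactly by $(iii)$, and $\partial(e_{xy})=-\vf(e_{xy})^2$ for $x<y$ (as $e_{xy}^2=0$), the implication $(iii)\Rightarrow(i)$ reduces to the single claim: $\vf(e_{xy})^2=0$ for all $x<y$.

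To prove the claim I would use that $\vf$ is a \emph{Lie} automorphism, not just an idempotent preserver. First, $J:=J(I(X,F))=[I(X,F),I(X,F)]$: a commutator always has zero diagonal and so lies in $J$, while $e_{xy}=[e_x,e_{xy}]$ for $x<y$ gives the reverse inclusion. Hence $\vf(J)=J$, so both $n:=\vf(e_{xy})$ and $n^2$ lie in $J$. Put $E_z:=\vf(e_z)$; by $(iii)$ the $E_z$ are idempotents, the identity $[E_z,E_w]=\vf([e_z,e_w])=0$ makes them pairwise commuting, and $[E_z,n]=\vf([e_z,e_{xy}])$ equals $n$ when $z\in\{x,y\}$ and $0$ otherwise. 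From $E_zn=n+nE_z$ one computes, in characteristic $2$, that $E_zn^2=n^2+nE_zn=n^2E_z$, so $[E_z,n^2]=0$ in every case; thus $n^2$ centralizes the subalgebra $B$ of $I(X,F)$ generated by $\{E_z\mid z\in X\}$.

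It remains to pin $B$ down enough to conclude $n^2\in B$, since then $n^2\in B\cap J=\{0\}$, a semisimple subalgebra meeting a nil ideal trivially. The $E_z$ are linearly independent (images under the bijection $\vf$ of the $e_z$) commuting idempotents, so $B$ is commutative and spanned by idempotents, hence semisimple; as $I(X,F)/J\cong F^{|X|}$ forces $I(X,F)$ to contain at most $|X|$ pairwise orthogonal nonzero idempotents, this pins $B$ down to $F^{|X|}$ with the $E_z$ a basis. Moreover $\vf$ preserves the Lie centre, which for connected $X$ equals the associative centre $F\delta$ by \cite[Corollary~1.3.15]{SpDo}, and $\vf(\delta)=\vf\big(\sum_z e_z\big)$ is an idempotent by $(iii)$ (because $\partial(\delta)=\sum_z\partial(e_z)=0$); the only nonzero idempotent in $F\delta$ being $\delta$ itself, we get $\vf(\delta)=\delta$. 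So $\delta=\sum_z E_z$ is the identity of $B$, whence the primitive idempotents $f_1,\dots,f_{|X|}$ of $B$ form a complete family of orthogonal idempotents of $I(X,F)$, each $f_j$ being primitive in $I(X,F)$ as well (otherwise $I(X,F)$ would contain more than $|X|$ pairwise orthogonal nonzero idempotents). For a primitive idempotent $f$ of $I(X,F)$ one has $fI(X,F)f=Ff$: its image in $I(X,F)/J\cong F^{|X|}$ is a primitive idempotent, hence equals $\bar e_x$ for some $x$, so $f$ is conjugate to $e_x$ by a unit, and $e_xI(X,F)e_x=Fe_x$. Therefore $C_{I(X,F)}(B)=\bigoplus_j f_jI(X,F)f_j=\bigoplus_j Ff_j=B$, and $n^2\in B\cap J=\{0\}$, as claimed.

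The hard part is this last paragraph. In characteristic $2$ the relation $e_z\circ e_w=0$ for $z\ne w$ carries no information, so the idempotents $\vf(e_z)$ are only guaranteed to commute, not to be pairwise orthogonal; one cannot treat them directly as a Peirce family and must instead pass to the primitive idempotents of the algebra $B$ they generate, exploiting both the rigidity of orthogonal families of idempotents in $I(X,F)$ and the triviality of the corners $fI(X,F)f$ at primitive idempotents. Everything else is routine.
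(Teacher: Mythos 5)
Your proof is correct, and the substantive implication \cref{vf(e_x)-idempotent} $\impl$ \cref{vf-pres-squares} is established by a genuinely different route from the paper's. Both arguments reduce that implication to the single identity $\vf(e_{xy})^2=0$ for $x<y$ by the same bilinear expansion of squares (your ``square defect'' $\partial$ is exactly the paper's closing computation with $f^2=\sum_{x\le y}f(x,y)^2e_{xy}^2+\sum f(x,y)f(u,v)(e_{xy}\circ e_{uv})$). The difference is how that identity is obtained: the paper leans on the classification of Lie automorphisms from \cite{FKS} --- reduce to an elementary $\vf$ via \cite[Theorem 4.15]{FKS}, then \cite[Lemma 4.3]{FKS} gives $\vf(e_{xy})=\sg(x,y)\0(e_{xy})$ with $\0(e_{xy})\in\B$, whose square is zero on the nose --- whereas you prove it intrinsically: $n^2=\vf(e_{xy})^2$ lies in $J(I(X,F))=[I(X,F),I(X,F)]$ and centralizes the subalgebra $B$ generated by the commuting idempotents $\vf(e_z)$, and a dimension count ($|X|$ linearly independent elements versus at most $|X|$ orthogonal nonzero idempotents in $I(X,F)$) identifies $B$ with $F^{|X|}$ and shows $C_{I(X,F)}(B)=B$, forcing $n^2\in B\cap J=\{0\}$. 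Your version is longer but essentially self-contained, using only the conjugacy of a primitive idempotent to some $e_x$ (which the paper itself invokes in \cref{sec-Jordan-auto} via \cite{Khripchenko-Novikov09}), while the paper's is a two-line corollary of machinery it imports anyway for the rest of \cref{sec-char(F)=2-and-|F|>2}. The two points where you are terse --- that a commutative subalgebra spanned by commuting idempotents embeds in some $F^N$ (pass to the atoms $\prod_{z\in S}E_z\prod_{z\notin S}(\dl-E_z)$), hence is reduced and of the form $F^m$, and that an idempotent congruent to $e_x$ modulo the nil radical is conjugate to it --- are standard and easily supplied, so I see no gap.
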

	\begin{proof}
		The implications \cref{vf-pres-squares} $\impl$ \cref{vf-pres-idempotents} $\impl$ \cref{vf(e_x)-idempotent} are trivial, so it only remains to establish \cref{vf(e_x)-idempotent} $\impl$ \cref{vf-pres-squares}. Assume that $\vf$ satisfies \cref{vf(e_x)-idempotent}. In view of \cite[Theorem 4.15]{FKS} we may take $\vf$ to be elementary. Then $\vf(e_{xy})=\sg(x,y)\0(e_{xy})$ for all $x<y$, where $\0(e_{xy})\in\B$ and $\sg(x,y)\in F^*$ by \cite[Lemma 4.3]{FKS}. By \cref{vf(e_x)-idempotent} we have $\vf(e_x)^2=\vf(e_x)=\vf(e_x^2)$ for all $x\in X$. Moreover, $\vf(e_{xy})^2=\sg(x,y)^2\0(e_{xy})^2=0=\vf(0)=\vf(e_{xy}^2)$ for all $x<y$. Since for any $f\in I(X,F)$ one has
		\begin{align*}
		f^2=\sum_{x\le y}f(x,y)^2e_{xy}^2+\sum_{(x,y)\ne(u,v)}f(x,y)f(u,v)(e_{xy}\circ e_{uv})
		\end{align*}
		and $\vf$ preserves the Jordan product, it follows that $\vf(f^2)=\vf(f)^2$ as desired.
	\end{proof}
	
	\begin{lem}\label{vf(e_x_i)-idemp<=>c_i=0-or-1}
		Let $X=\{x_1,\dots,x_n\}$ be a finite connected poset, $F$ a field with $\ch(F)=2$, $\vf$ an elementary Lie automorphism of $I(X,F)$ and $\vf=\tau_{\0,\sg,c}$ its description as in \cite[Theorem 5.18]{FKS}. Then for all $1\le i\le n$ we have $\vf(e_{x_i})\in E(I(X,F))$ if and only if $c_i\in\{0,1\}$. 
	\end{lem}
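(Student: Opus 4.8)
The plan is to reduce the claim to squaring one very explicit element, a computation that collapses in characteristic $2$. First I would unwind the description of $\tau_{\0,\sg,c}$ from \cite[Theorem 5.18]{FKS} on the diagonal idempotents. There an elementary Lie automorphism is a proper Lie automorphism corrected by a map into the centre, which equals $F\dl$ since $X$ is connected; the datum $\sg$ only governs the off-diagonal basis elements, while the datum $\0$ (induced by an order automorphism or anti-automorphism of $X$) sends $e_{x_i}$ to a diagonal idempotent $\0(e_{x_i})$. Reading off the normal form, for each $i$ one obtains
\begin{align*}
\tau_{\0,\sg,c}(e_{x_i})=\0(e_{x_i})+c_i\dl,
\end{align*}
where $\0(e_{x_i})$ is a diagonal idempotent of $I(X,F)$ and $c_i$ is exactly the coefficient of the central idempotent $\dl$.

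Given this, I would simply compute the square, abbreviating $g:=\0(e_{x_i})$. Since $g^2=g$, $g\dl=\dl g=g$, $\dl^2=\dl$ and $2c_i=0$ because $\ch(F)=2$, the two equal cross terms cancel, so that
\begin{align*}
\tau_{\0,\sg,c}(e_{x_i})^2=(g+c_i\dl)^2=g+2c_ig+c_i^2\dl=g+c_i^2\dl .
\end{align*}
Hence $\tau_{\0,\sg,c}(e_{x_i})\in E(I(X,F))$ if and only if $g+c_i^2\dl=g+c_i\dl$, i.e.\ $(c_i^2-c_i)\dl=0$; as $\dl\ne0$ and $F$ is a field, this is equivalent to $c_i(c_i-1)=0$, that is, to $c_i\in\{0,1\}$.

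The only step that is not entirely routine is the first one: extracting from \cite[Theorem 5.18]{FKS} that the $i$-th diagonal idempotent is sent to $\0(e_{x_i})$ plus precisely $c_i$ times the central idempotent $\dl$, with no off-diagonal contribution. After that, everything is the displayed one-line computation, whose point is that the cross terms $c_ig$ disappear exactly because $\ch(F)=2$; in any other characteristic the same expansion would force $2c_i=0$ and hence $c_i=0$, which is why these extra idempotent-preserving Lie automorphisms are a characteristic-$2$ phenomenon.
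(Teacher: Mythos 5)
Your squaring computation is fine and the conclusion is correct, but the structural input you claim to ``read off'' from \cite[Theorem 5.18]{FKS} is not what that normal form actually provides, and that input is where the real content of the lemma lives. First, in \cite{FKS} the datum $\0$ is a bijection of the off-diagonal basis $\B=\{e_{xy}\mid x<y\}$; it is not defined on the diagonal idempotents and it is \emph{not} in general induced by an order automorphism or anti-automorphism of $X$ --- that is exactly the point of non-proper Lie automorphisms. The four-element example in this paper (\cref{rank-vf(e_x)-diff-from-1-and-n-1}), where $\vf(e_1)=e_3+e_4$ with $c_1=0$, shows that $\vf(e_{x_i})-c_i\dl$ can be a diagonal idempotent of rank $2$, so it cannot be of the form $e_{\lb(x_i)}$ for any bijection $\lb$ of $X$; your parenthetical gloss of $\0$ is therefore wrong, even though the weaker statement you actually use (that the non-central part is \emph{some} diagonal idempotent) happens to be true.

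Second, and more importantly, that weaker statement is itself a characteristic-$2$ fact rather than a feature of the normal form. What \cite[Definition 5.17]{FKS} and formulas (11)--(12) of \cite[Lemma 5.8]{FKS} give is that $\vf(e_{x_i})$ is diagonal with $\vf(e_{x_i})(x_1,x_1)=c_i$ and $\vf(e_{x_i})(x_j,x_j)=c_i+s(i,j)$, where the increments $s(i,j)$ lie in $\{0,1\}$ \emph{only because} $\ch(F)=2$; without that, $\sum_j s(i,j)e_{x_j}$ need not be an idempotent. So $\ch(F)=2$ enters twice --- once to make your $g$ an idempotent at all, and once in the cancellation of the cross terms --- whereas your write-up locates it only in the second place and treats the first as routine unwinding. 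Once the decomposition $\vf(e_{x_i})=g+c_i\dl$ with $g\in E(D(X,F))$ is correctly justified, your one-line computation does finish the proof, and it is then essentially the paper's own argument repackaged: the paper checks directly that all diagonal entries $c_i+s(i,j)$ lie in $\{0,1\}$ if and only if $c_i$ does.
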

	\begin{proof}
		Assume that $\vf(e_{x_i})\in E(I(X,F))$. Then by \cite[Definition 5.17]{FKS} we have $c_i=\vf(e_{x_i})(x_1,x_1)\in\{0,1\}$. Conversely, if $c_i\in\{0,1\}$, then $\vf(e_{x_i})(x_1,x_1)\in\{0,1\}$, and moreover by formulas (11) and (12) from \cite[Lemma 5.8]{FKS} we get $\vf(e_{x_i})(x_j,x_j)=\vf(e_{x_i})(x_1,x_1)+s(i,j)$, where $s(i,j)\in\{0,1\}$ because $\ch(F)=2$. Thus, $\vf(e_{x_i})(x_j,x_j)\in\{0,1\}$ for all $1\le j\le n$. Since $\vf(e_{x_i})\in D(I(X,F))$, it is an idempotent.
	\end{proof}
	
	\begin{thrm}\label{tau_0_sg_c-idemp-pres<=>c_i=0-or-1}
		Let $X$ be a finite poset and $F$ be a field with $\ch(F)=2$ and $|F|>2$. Then the bijective linear idempotent preservers $\vf:I(X,F)\to I(X,F)$ are exactly the Lie automorphisms of $I(X,F)$ of the form $\psi\circ\tau_{\0,\sg,c}$, where $\psi$ is an inner automorphism of $I(X,F)$ and $\tau_{\0,\sg,c}$ is an elementary Lie automorphism of $I(X,F)$ in which $c_i\in\{0,1\}$ for all $i$.
	\end{thrm}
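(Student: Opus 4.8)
\emph{Approach.} The plan is to obtain the equivalence by assembling \cref{idemp-pres-is-Lie-auto}, the classification of Lie automorphisms of $I(X,F)$ from~\cite{FKS}, and the two preparatory lemmas \cref{vf-idemp-pres<=>vf(e_x)-idemp} and \cref{vf(e_x_i)-idemp<=>c_i=0-or-1}; accordingly I take $X$ connected, as is needed for that classification and for both lemmas. The single observation that keeps the argument short is that an inner automorphism $\psi$ of $I(X,F)$ is an automorphism of the associative algebra $I(X,F)$, hence $\psi$ and $\psi^{-1}$ both carry $E(I(X,F))$ onto itself; so idempotency can be transported freely between $\vf$ and its elementary factor, and everything reduces to the parameters $c_i$ of that factor.

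\emph{The implication $(\Leftarrow)$.} I would start from $\vf=\psi\circ\tau_{\0,\sg,c}$ with $\psi$ inner and every $c_i\in\{0,1\}$. Such a $\vf$ is bijective and $F$-linear, being a composition of bijective $F$-linear maps, and $\tau_{\0,\sg,c}$ is a Lie automorphism of $I(X,F)$. From $c_i\in\{0,1\}$ and \cref{vf(e_x_i)-idemp<=>c_i=0-or-1} I get $\tau_{\0,\sg,c}(e_{x_i})\in E(I(X,F))$ for all $i$, i.e. condition~\cref{vf(e_x)-idempotent} of \cref{vf-idemp-pres<=>vf(e_x)-idemp} holds for $\tau_{\0,\sg,c}$; hence $\tau_{\0,\sg,c}$ preserves idempotents by that lemma, and composing with the idempotent-preserving map $\psi$ shows that $\vf$ is a bijective $F$-linear idempotent preserver.

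\emph{The implication $(\Rightarrow)$.} Conversely, given a bijective $F$-linear idempotent preserver $\vf$ of $I(X,F)$, I would invoke \cref{idemp-pres-is-Lie-auto} --- this is the step where the hypothesis $|F|>2$ is used, through \cref{Idemp-pres-over-field-with>2-elem} and \cref{Idemp-pres-is-Jordan-homo} --- to conclude that $\vf$ is a Lie automorphism of $I(X,F)$, and then apply~\cite{FKS} to write $\vf=\psi\circ\tau_{\0,\sg,c}$ with $\psi$ an inner automorphism and $\tau_{\0,\sg,c}$ an elementary Lie automorphism. It remains only to force $c_i\in\{0,1\}$: since $\psi^{-1}$ and $\vf$ both preserve idempotents, so does $\tau_{\0,\sg,c}=\psi^{-1}\circ\vf$; in particular $\tau_{\0,\sg,c}(e_{x_i})\in E(I(X,F))$ for every $i$, and \cref{vf(e_x_i)-idemp<=>c_i=0-or-1} then gives $c_i\in\{0,1\}$.

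\emph{Main obstacle.} At this stage there is essentially no new difficulty: every substantive ingredient has been isolated earlier --- ``idempotent preserver $\Rightarrow$ Lie automorphism'' in \cref{Idemp-pres-is-Jordan-homo}, \cref{Idemp-pres-over-field-with>2-elem} and \cref{idemp-pres-is-Lie-auto}, the structure of Lie automorphisms in~\cite{FKS}, and the computations in \cref{vf-idemp-pres<=>vf(e_x)-idemp,vf(e_x_i)-idemp<=>c_i=0-or-1}. The one point to be careful about is that the factorization $\vf=\psi\circ\tau_{\0,\sg,c}$ furnished by~\cite{FKS} really has $\psi$ a genuine (inner) \emph{algebra} automorphism and $\tau_{\0,\sg,c}$ precisely of the elementary form treated in \cref{vf(e_x_i)-idemp<=>c_i=0-or-1}; granting this, transporting idempotency through $\psi$ is automatic and the proof is complete.
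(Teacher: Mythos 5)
Your proposal is correct and follows essentially the same route as the paper, which proves the theorem by combining exactly the three ingredients you use: \cref{idemp-pres-is-Lie-auto}, \cref{vf-idemp-pres<=>vf(e_x)-idemp} and \cref{vf(e_x_i)-idemp<=>c_i=0-or-1}, together with the decomposition of Lie automorphisms from~\cite{FKS}. Your remark that connectedness of $X$ is implicitly needed (the paper's statement omits it while the supporting lemmas assume it) is a fair and accurate observation, and your explicit handling of the transport of idempotency through the inner factor $\psi$ merely spells out what the paper leaves as a one-line consequence.
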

	\begin{proof}
		A consequence of \cref{idemp-pres-is-Lie-auto,vf-idemp-pres<=>vf(e_x)-idemp,vf(e_x_i)-idemp<=>c_i=0-or-1}.
	\end{proof}
	
	The following bijective linear map was used in~\cite{FKS} as an example of a non-proper~\cite{FKS2} Lie automorphism (i.e., a Lie automorphism which is not the sum of an automorphism or the negative of an anti-automorphism and a central-valued linear map) of $I(X,F)$. It turns out to be an idempotent preserver of $I(X,F)$.
	\begin{exm}\label{rank-vf(e_x)-diff-from-1-and-n-1}
		Let $F$ be a field with $\ch(F)=2$ and $X=\{1,2,3,4\}$ with the following Hasse diagram. 
		\begin{center}
			\begin{tikzpicture}
			\draw  (0,0)-- (-1,1);
			\draw  (-1,1)-- (-2,2);
			\draw  (0,0)-- (1,1);
			\draw [fill=black] (-1,1) circle (0.05);
			\draw  (-1,0.6) node {$2$};
			\draw [fill=black] (-2,2) circle (0.05);
			\draw  (-2,1.6) node {$3$};
			\draw [fill=black] (1,1) circle (0.05);
			\draw  (1,0.6) node {$4$};
			\draw [fill=black] (0,0) circle (0.05);
			\draw  (0,-0.4) node {$1$};
			\end{tikzpicture}
		\end{center}
		Consider the bijective linear map $\vf:I(X,F)\to I(X,F)$ acting on the natural basis as follows:
		$
		\vf(e_1) = e_3 + e_4,
		\vf(e_{12}) = e_{23},
		\vf(e_{13}) = e_{13},
		\vf(e_{14}) = e_{14},
		\vf(e_2) = e_1 + e_3 + e_4,
		\vf(e_{23}) = e_{12},
		\vf(e_3) = e_2 + e_3,
		\vf(e_4) = e_4.    
		$
		It was shown in~\cite[Example 5.20]{FKS} that $\vf$ is an elementary Lie automorphism of $I(X,F)$ of the form $\tau_{\0,\sg,c}$, where $c_1=c_3=c_4=0$ and $c_2=1$. Hence, $\vf$ is an idempotent-preserver of $I(X,F)$ by \cref{tau_0_sg_c-idemp-pres<=>c_i=0-or-1}. 
	\end{exm}
	
	
	An example of a proper Lie automorphism of $I(X,F)$ which is not an idempotent-preserver can also be easily constructed.
	\begin{exm}\label{Lie-auto-non-idemp-pres}
		Let $X=\{1,2\}$, $1<2$, and $F$ is a field with $\ch(F)=2$ and $|F|>2$. Define the linear map $\vf:I(X,F)\to I(X,F)$ as follows: $\vf(e_1)=e_1$, $\vf(e_{12})=e_{12}$ and $\vf(e_2)=e_2+r\dl$, where $r\not\in \{0,1\}$. Then $\vf$ is an elementary Lie automorphism of $I(X,F)$ of the form $\tau_{\0,\sg,c}$, where $c_1=1$ and $c_2=r$. Since $r\not\in \{0,1\}$, $\vf$ is not an idempotent-preserver by \cref{tau_0_sg_c-idemp-pres<=>c_i=0-or-1}. Observe moreover that $\vf=\id+\nu$, where $\nu(e_{12})=\nu(e_1)=0$ and $\nu(e_2)=r\dl$, so $\vf$ is proper~\cite{FKS2}.
	\end{exm}
	

	If $R=\mathbb{Z}_2$, then the statements of \cref{Idemp-pres-is-Jordan-homo,idemp-pres-is-Lie-auto} are not true.
	\begin{exm}
		Let $X=\{1,2\}$ with $1<2$. Consider $\vf: I(X,\Z_2)\to I(X,\Z_2)$ defined as follows: $\vf(f)=f+f(1,2)\dl$. It is clear that $\vf$ is bijective (with $\vf\m=\vf$) and additive. If $f^2=f$, then $\vf(f)^2=(f+f(1,2)\dl)^2=f^2+f(1,2)^2\dl^2=f+f(1,2)\dl$, so $\vf$ preserves idempotents. Observe that $\vf(e_1)=e_1$ and $\vf(e_{12})=e_{12}+\dl$, so $\vf(e_1)\circ\vf(e_{12})=e_1\circ (e_{12}+\dl)=e_1\circ e_{12}=e_{12}$. However, $\vf(e_1\circ e_{12})=\vf(e_{12})=e_{12}+\dl$, so $\vf$ does not preserve the Jordan product.
	\end{exm}

	\section{Jordan automorphisms of $I(X,F)$}\label{sec-Jordan-auto}
	
	In this section $F$ will be a field (of an arbitrary characteristic) and $X$ will be finite and connected (although some results hold for incidence algebras of arbitrary locally finite posets over commutative indecomposable unital rings). We recall that by a Jordan automorphism of an algebra $A$ we mean a bijective linear map $A\to A$ satisfying \cref{vf(a^2),vf(aba)}.
	
	\begin{lem}\label{vf(e_x)-conj-to-e_lb(x)}
		Let $\vf$ be a Jordan automorphism of $I(X,F)$. Then there exist an invertible element $\bt\in I(X,F)$ and a bijection $\lb:X\to X$ such that $\vf(e_x)=\bt e_{\lb(x)}\bt\m$ for all $x\in X$.
	\end{lem}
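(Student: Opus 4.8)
The plan is to exploit that a Jordan automorphism sends primitive idempotents to primitive idempotents, together with the fact that in the incidence algebra of a finite connected poset the set $\{e_x\mid x\in X\}$ is a complete set of orthogonal primitive idempotents with the special feature that their sum is the identity $\dl$. First I would recall from \cref{vf(ab+ba)} (which holds for $\vf$ by definition of Jordan automorphism) that $\vf$ preserves orthogonality of idempotents: if $e,f$ are orthogonal idempotents then $\vf(e)+\vf(f)$ is an idempotent, and subtracting $\vf(e)^2=\vf(e)$, $\vf(f)^2=\vf(f)$ forces $\vf(e)\vf(f)+\vf(f)\vf(e)=0$; multiplying this relation on left and right by $\vf(e)$ and using \cref{vf(aba)} gives $\vf(e)\vf(f)=\vf(f)\vf(e)=0$. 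Hence $\{\vf(e_x)\mid x\in X\}$ is again a complete family of pairwise orthogonal idempotents summing to $\vf(\dl)=\dl$ (the latter because $\vf$ is bijective and $\dl$ is the unique idempotent that is a multiplicative identity, or more directly because $\vf(\dl)^2=\vf(\dl)$ and $\vf(\dl)$ commutes with every $\vf(f)$ by \cref{vf(ab+ba)}, so $\vf(\dl)$ is a central idempotent, and $X$ connected means the only central idempotents of $I(X,F)$ are $0$ and $\dl$; surjectivity rules out $0$).

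Next I would argue that each $\vf(e_x)$ is primitive. Since $\vf$ is bijective and Jordan, its inverse $\vf\m$ is also a Jordan automorphism (this is standard: $\vf\m$ satisfies \cref{vf(a^2),vf(aba)} because these identities are symmetric under inversion once we know $\vf$ is a linear bijection preserving them). If $\vf(e_x)=g+h$ with $g,h$ nonzero orthogonal idempotents, then applying $\vf\m$ and the orthogonality-preservation just established gives $e_x=\vf\m(g)+\vf\m(h)$ as a sum of nonzero orthogonal idempotents, contradicting primitivity of $e_x$ (valid since $F$, being a field, is indecomposable, as noted in the Preliminaries). Therefore each $\vf(e_x)$ is a primitive idempotent of $I(X,F)$.

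Now I invoke the structure of primitive idempotents in $I(X,F)$: a complete set of $|X|$ pairwise orthogonal primitive idempotents summing to $\dl$ must be conjugate to the standard one $\{e_x\}$. The cleanest route: each primitive idempotent $g$ of $I(X,F)$ has the form $g=(1+j)e_x(1+j)\m$ for some $x\in X$ and some $j\in J(I(X,F))$ — this follows from lifting idempotents modulo the Jacobson radical, since $I(X,F)/J(I(X,F))\cong D(X,F)\cong F^{|X|}$ whose primitive idempotents are exactly the images of the $e_x$, and any two idempotents of $I(X,F)$ with the same image mod $J$ are conjugate by an element of $1+J$. Applying this to the whole orthogonal family $\{\vf(e_x)\}$, and using that orthogonal systems of idempotents lifting a fixed one mod $J$ can be simultaneously conjugated, yields a single invertible $\bt\in 1+J(I(X,F))$ and a map $x\mapsto\lb(x)$ with $\vf(e_x)=\bt e_{\lb(x)}\bt\m$; the map $\lb$ is a bijection because the $\vf(e_x)$ are distinct (as $\vf$ is injective and the $e_x$ are distinct) and there are exactly $|X|$ of them, forcing $\lb$ to be a permutation of $X$.

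**Main obstacle.** The delicate point is the simultaneous conjugation of the whole orthogonal family $\{\vf(e_x)\mid x\in X\}$ to $\{e_{\lb(x)}\mid x\in X\}$ by one invertible element, rather than conjugating each idempotent separately. I would handle this by the standard inductive lifting argument for orthogonal idempotents over a ring complete in its $J$-adic topology (here $J(I(X,F))$ is nilpotent since $X$ is finite, so completeness is automatic): having conjugated $\vf(e_{x_1}),\dots,\vf(e_{x_{k-1}})$ into standard position within the corner ring cut out by the remaining idempotent, one conjugates $\vf(e_{x_k})$ by an element of $1+J$ of that corner, which does not disturb the already-placed ones. Alternatively, one can short-circuit this by citing the known description of Peirce decompositions of $I(X,F)$, but the self-contained inductive argument is short enough to include.
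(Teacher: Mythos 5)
Your proof is correct in substance, but it reaches the conclusion by a genuinely different route from the paper at the key step. You and the paper agree on the first half: $\vf$ sends the $e_x$ to pairwise orthogonal primitive idempotents (the paper cites Jacobson--Rickart for orthogonality preservation and quotes a lemma of Khripchenko--Novikov to get that each primitive idempotent is conjugate to some $e_{\lb(x)}$, whereas you re-derive both facts from scratch, the latter via lifting idempotents modulo the nilpotent radical $J(I(X,F))$ using $I(X,F)/J(I(X,F))\cong D(X,F)$). Where you diverge is the simultaneous conjugation, which you correctly flag as the main obstacle: you propose the standard inductive corner-ring argument, conjugating one idempotent at a time by units of the form $1+j$ supported in the complementary corner so as not to disturb the already-placed ones. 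The paper instead produces the conjugating element in one shot by the explicit formula $\bt=\sum_{x\in X}\vf(e_x)e_{\lb(x)}$, whose invertibility is immediate from $\bt(\lb(x),\lb(x))=1$ and which satisfies $\bt e_{\lb(x)}=\vf(e_x)\bt$ by the two orthogonality relations. Your induction works and is more portable (it is the general semiperfect-ring argument), but the explicit $\bt$ avoids the bookkeeping entirely; you also prove $\vf(\dl)=\dl$, which is true but not needed for this lemma. One small inaccuracy: you justify injectivity of $\lb$ by the distinctness of the $\vf(e_x)$, which by itself does not prevent two distinct idempotents from being conjugate to the same $e_{\lb(x)}$; the correct reason (used by the paper) is their orthogonality, which forces the diagonal parts $e_{\lb(x)}$ and $e_{\lb(y)}$ to be orthogonal and hence distinct. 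Since you established orthogonality earlier, this is a one-line repair.
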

	\begin{proof}
		Clearly, $\vf(e_x)$ is an idempotent for all $x\in X$ by \cref{vf(a^2)}. Moreover, it is primitive. Indeed, assume that $\vf(e_x)=f+g$, where $f$ and $g$ are orthogonal idempotents. Since $\vf\m$ is also a Jordan automorphism, $\vf\m(f)$ and $\vf\m(g)$ are orthogonal idempotents by~\cite[Corollary 2]{Jacobson-Rickart50}. But $e_x$ is primitive, so $e_x=\vf\m(f)+\vf\m(g)$ implies $\vf\m(f)=0$ or $\vf\m(g)=0$. Thus, $f=0$ or $g=0$ as desired.
		
		By~\cite[Lemma 1]{Khripchenko-Novikov09} the idempotent $\vf(e_x)$ is conjugated to $e_{\lb(x)}$ for some $\lb(x)\in X$. If $x\ne y$, then $\vf(e_x)$ and $\vf(e_y)$ are orthogonal idempotents by~\cite[Corollary 2]{Jacobson-Rickart50}, so $\lb(x)\ne \lb(y)$ (otherwise $(\vf(e_x)\vf(e_y))(\lb(x),\lb(x))=1\ne 0$). Hence, $\lb$ is injective, and thus bijective because $X$ is finite.
		
		Define $\bt=\sum_{x\in X}\vf(e_x)e_{\lb(x)}$.  It is invertible, because $\bt(\lb(x),\lb(x))=1$ for all $x\in X$. Moreover, $\bt e_{\lb(x)}=\vf(e_x)e_{\lb(x)}$ due to orthogonality of $e_{\lb(x)}$ and $e_{\lb(y)}$ for $x\ne y$. Similarly, $\vf(e_x)\bt=\vf(e_x)e_{\lb(x)}$ in view of orthogonality of $\vf(e_x)$ and $\vf(e_y)$ for $x\ne y$. Thus, $\bt e_{\lb(x)}=\vf(e_x)\bt$, whence $\vf(e_x)=\bt e_{\lb(x)}\bt\m$.
	\end{proof}
	
	Composing $\vf$ with an inner automorphism of $I(X,F)$, we may assume that $\vf(e_x)=e_{\lb(x)}$ for some bijection $\lb:X\to X$.
	
	\begin{lem}\label{lb-auto-or-anti-auto}
		Let $\vf$ be a Jordan automorphism of $I(X,F)$ such that $\vf(e_x)=e_{\lb(x)}$ for some bijection $\lb:X\to X$. Then $\lb$ is either an automorphism or an anti-automorphism of $X$.
	\end{lem}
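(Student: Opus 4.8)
The plan is to pin down, for every pair $x<y$, the single off-diagonal slot into which $\vf$ sends $e_{xy}$, to record in a sign $\ve(x,y)\in\{1,-1\}$ whether $\lb$ preserves or reverses that relation, and then to use the connectedness of $X$ to force $\ve$ to be a global constant; the value $1$ will give that $\lb$ is an order automorphism and the value $-1$ that it is an order anti-automorphism. (The case $|X|=1$ is trivial, so assume $|X|\ge 2$.) First I would locate $\vf(e_{xy})$ for $x<y$. From $e_{xy}=e_x e_{xy} e_y+e_y e_{xy} e_x$, where the second summand vanishes because $e_y e_{xy}=\dl(y,x)e_y=0$, applying \cref{vf(abc+cba)} and $\vf(e_x)=e_{\lb(x)}$ gives $\vf(e_{xy})=e_{\lb(x)}\vf(e_{xy})e_{\lb(y)}+e_{\lb(y)}\vf(e_{xy})e_{\lb(x)}$. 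Since $e_a g e_b=g(a,b)e_{ab}$ when $a\le b$ and equals $0$ otherwise, and $\lb(x)\ne\lb(y)$, this forces $\vf(e_{xy})$ to be a scalar multiple of $e_{\lb(x)\lb(y)}$ if $\lb(x)<\lb(y)$, of $e_{\lb(y)\lb(x)}$ if $\lb(y)<\lb(x)$, and to be $0$ if $\lb(x)$ and $\lb(y)$ are incomparable. Bijectivity of $\vf$ rules out the last alternative and makes the scalar nonzero; in particular $\lb(x)$ and $\lb(y)$ are comparable, and I set $\ve(x,y)=1$ in the first case and $\ve(x,y)=-1$ in the second.

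The next step is to propagate this sign along chains and fans. For $x<y<z$ the identity $e_{xy}e_{yz}+e_{yz}e_{xy}=e_{xz}$ together with \cref{vf(ab+ba)} gives $\vf(e_{xz})=\vf(e_{xy})\vf(e_{yz})+\vf(e_{yz})\vf(e_{xy})$; substituting the two slot-forms and multiplying out using $e_{ab}e_{cd}=\dl(b,c)e_{ad}$, one checks that the right-hand side vanishes whenever $\ve(x,y)\ne\ve(y,z)$ (the required matchings of inner indices all fail), which is impossible since $\vf(e_{xz})\ne0$, while if $\ve(x,y)=\ve(y,z)$ exactly one product survives and yields $\ve(x,z)=\ve(x,y)$. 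For a fan $x<y$, $x<z$ with $y\ne z$ I would instead use $(e_{xy}+e_{xz})^2=0$, so $\vf(e_{xy}+e_{xz})^2=0$ by \cref{vf(a^2)}; if the two signs disagreed, say (after swapping $y$ and $z$) $\ve(x,y)=1$ and $\ve(x,z)=-1$, then $\lb(z)<\lb(x)<\lb(y)$, and the only nonvanishing term of $(\sg e_{\lb(x)\lb(y)}+\rho e_{\lb(z)\lb(x)})^2$ with $\sg,\rho\in F^*$ is $\sg\rho\,e_{\lb(z)\lb(x)}e_{\lb(x)\lb(y)}=\sg\rho\,e_{\lb(z)\lb(y)}\ne0$, a contradiction; hence $\ve(x,y)=\ve(x,z)$, and the mirror computation with $(e_{xz}+e_{yz})^2=0$ gives $\ve(x,z)=\ve(y,z)$ whenever $x<z$, $y<z$ and $x\ne y$.

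Now I would globalise. View every pair of comparable distinct elements $\{a,b\}$ (with $a<b$, say) as an edge of the comparability graph of $X$ carrying the label $\ve(a,b)$. The three facts just proved say precisely that any two edges sharing a vertex carry the same label: the fan cases handle a shared vertex that is least, resp.\ greatest, in both edges, and the chain case handles a shared vertex that is least in one and greatest in the other. The comparability graph of $X$ is connected, since it contains the Hasse diagram of $X$, so $\ve$ is a constant $\ve_0\in\{1,-1\}$. If $\ve_0=1$ then $x<y\impl\lb(x)<\lb(y)$, i.e.\ $\lb$ is order-preserving; as $X$ is finite, the injective self-map $(x,y)\mapsto(\lb(x),\lb(y))$ of the finite set $\{(x,y)\mid x<y\}$ is onto, which forces $\lb(x)<\lb(y)\impl x<y$ as well, so $\lb$ is an order automorphism. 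If $\ve_0=-1$ then $\lb$ is order-reversing, and the same counting argument, applied to the bijection $(x,y)\mapsto(\lb(x),\lb(y))$ from $\{(x,y)\mid x<y\}$ onto $\{(x,y)\mid y<x\}$, shows $\lb$ is an order anti-automorphism.

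I expect the middle paragraph to be the main obstacle: finding the elementary identities $e_{xy}e_{yz}+e_{yz}e_{xy}=e_{xz}$ and $(e_{xy}+e_{xz})^2=0$ which, after feeding them through the Jordan conditions, transport the local sign $\ve$ across chains and fans, and then recognising that connectedness of $X$ links any two comparable elements through a chain of such transports, so that $\ve$ cannot change. By comparison, the identification of $\vf(e_{xy})$ in the first step is a short computation, and the concluding counting argument is routine.
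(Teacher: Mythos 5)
Your proposal is correct and follows essentially the same route as the paper: the same identity $e_{xy}=e_xe_{xy}e_y+e_ye_{xy}e_x$ pins $\vf(e_{xy})$ to a single slot, the same Jordan identities ($e_{xz}=e_{xy}\circ e_{yz}$ for chains and the vanishing of $e_{xy}\circ e_{xz}$ for fans, which your $(e_{xy}+e_{xz})^2=0$ repackages) propagate the sign, and connectedness of $X$ makes it global — your edge-labelled comparability graph is just the paper's induction along a walk in different clothing. The only cosmetic divergence is at the end, where you deduce that $\lb^{-1}$ is order-(pre/re)serving by a finite counting argument rather than by running the argument on $\vf^{-1}$ as the paper does; both are fine.
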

	\begin{proof}
		Let $x<y$. Observe that $e_{xy}=e_xe_{xy}e_y+e_ye_{xy}e_x$, so by \cref{vf(abc+cba)}
		\begin{align*}
		\vf(e_{xy})=e_{\lb(x)}\vf(e_{xy})e_{\lb(y)}+e_{\lb(y)}\vf(e_{xy})e_{\lb(x)}.
		\end{align*}
		Hence, either $\lb(x)<\lb(y)$, in which case 
		\begin{align}\label{vf(e_xy)=e_lb(x)vf(e_xy)e_lb(y)}
		\vf(e_{xy})=e_{\lb(x)}\vf(e_{xy})e_{\lb(y)}=\vf(e_{xy})(\lb(x),\lb(y))e_{\lb(x)\lb(y)},
		\end{align}
		or $\lb(y)<\lb(x)$, in which case 
		\begin{align}\label{vf(e_xy)=e_lb(y)vf(e_xy)e_lb(x)}
		\vf(e_{xy})=e_{\lb(y)}\vf(e_{xy})e_{\lb(x)}=\vf(e_{xy})(\lb(y),\lb(x))e_{\lb(y)\lb(x)}
		\end{align}
		(otherwise $\vf(e_{xy})=0$).
		
		Assume that $\lb(x)<\lb(y)$ and take another pair $u<v$. We are going to prove that $\lb(u)<\lb(v)$. Since $X$ is connected, there is a walk $y=x_1,\dots x_{m-1}=u$ from $y$ to $u$. We extend it to the sequence $x_0=x,x_1=y,x_2,\dots,x_{m-2},x_{m-1}=u,x_m=v$. We now prove by induction on $i$ that
		\begin{align*}
		x_i<x_{i+1}\impl\lb(x_i)<\lb(x_{i+1}),\text{ and }x_i>x_{i+1}\impl\lb(x_i)>\lb(x_{i+1})
		\end{align*}
		for all $0\le i\le m-1$. The base case holds by assumption. For the inductive step consider $4$ cases.
		
		\textit{Case 1.} $x_i<x_{i+1}<x_{i+2}$. By induction hypothesis $\lb(x_i)<\lb(x_{i+1})$. Suppose that $\lb(x_{i+1})>\lb(x_{i+2})$. Since $e_{x_ix_{i+2}}=e_{x_ix_{i+1}}\circ e_{x_{i+1}x_{i+2}}$, then by \cref{vf(ab+ba),vf(e_xy)=e_lb(x)vf(e_xy)e_lb(y),vf(e_xy)=e_lb(y)vf(e_xy)e_lb(x)}
		\begin{align*}
		\vf(e_{x_ix_{i+2}})&=\vf(e_{x_ix_{i+1}})\circ\vf(e_{x_{i+1}x_{i+2}})\\
		&=(e_{\lb(x_i)}\vf(e_{x_ix_{i+1}})e_{\lb(x_{i+1})})\circ (e_{\lb(x_{i+2})}\vf(e_{x_{i+1}x_{i+2}})e_{\lb(x_{i+1})})=0,
		\end{align*}
		a contradiction. Thus, $\lb(x_{i+1})<\lb(x_{i+2})$.
		
		\textit{Case 2.} $x_i<x_{i+1}>x_{i+2}$. By induction hypothesis $\lb(x_i)<\lb(x_{i+1})$. Suppose that $\lb(x_{i+1})<\lb(x_{i+2})$. Since $e_{x_ix_{i+1}}\circ e_{x_{i+2}x_{i+1}}=0$, then by \cref{vf(ab+ba),vf(e_xy)=e_lb(x)vf(e_xy)e_lb(y),vf(e_xy)=e_lb(y)vf(e_xy)e_lb(x)}
		\begin{align*}
		0&=\vf(e_{x_ix_{i+1}})\circ\vf(e_{x_{i+2}x_{i+1}})\\
		&=\vf(e_{x_ix_{i+1}})(\lb(x_i),\lb(x_{i+1}))\vf(e_{x_{i+2}x_{i+1}})(\lb(x_{i+1}),\lb(x_{i+2}))e_{\lb(x_i)\lb(x_{i+2})},
		\end{align*}
		so either $\vf(e_{x_ix_{i+1}})(\lb(x_i),\lb(x_{i+1}))=0$ or $\vf(e_{x_{i+2}x_{i+1}})(\lb(x_{i+1}),\lb(x_{i+2}))=0$, i.e. either $\vf(e_{x_ix_{i+1}})=0$ or $\vf(e_{x_{i+2}x_{i+1}})=0$, a contradiction.
		
		\textit{Case 3.} $x_i>x_{i+1}>x_{i+2}$. This case is proved similarly to Case 1 using $e_{x_{i+2}x_i}=e_{x_{i+2}x_{i+1}}\circ e_{x_{i+1}x_i}$.
		
		\textit{Case 4.} $x_i>x_{i+1}<x_{i+2}$. This case is proved similarly to Case 2 using $e_{x_{i+1}x_i}\circ e_{x_{i+1}x_{i+2}}=0$.
		
		In particular, for $i=m-1$ we have $\lb(x_{m-1})<\lb(x_m)$, i.e. $\lb(u)<\lb(v)$, as desired. So, $\lb$ is order-preserving. Observe that $\lb\m$ corresponds to $\vf\m$, and since $\lb(x)<\lb(y)$ and $\lb\m(\lb(x))<\lb\m(\lb(y))$, by the above argument $\lb\m$ is also order-preserving. Thus, $\lb$ is an automorphism of $X$.
		
		Similarly, one proves that $\lb(x)>\lb(y)$ implies $\lb(u)>\lb(v)$ for any other pair $u<v$ in $X$. In this case $\lb$ is an anti-automorphism of $X$.
	\end{proof}
	
	Replacing $\vf$ by $(\widehat\lb)\m\circ\vf$, we may assume that $\vf(e_x)=e_x$ for all $x\in X$.
	
	\begin{lem}\label{vf-mult-auto}
		Let $\vf$ be a Jordan automorphism of $I(X,F)$ such that $\vf(e_x)=e_x$ for all $x\in X$. Then $\vf$ is a multiplicative automorphism of $I(X,F)$.
	\end{lem}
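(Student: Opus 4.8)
The plan is to show that, under the hypothesis $\vf(e_x)=e_x$ for all $x$, the map $\vf$ acts on the standard basis by $\vf(e_{xy})=\sg(x,y)e_{xy}$ for suitable scalars $\sg(x,y)\in F^*$ (setting $\sg(x,x):=1$), and that these scalars satisfy the cocycle identity $\sg(x,y)\sg(y,z)=\sg(x,z)$; then $\sg$ is multiplicative, so $M_\sg$ is an automorphism of $I(X,F)$, and since $\vf$ and $M_\sg$ are $F$-linear and agree on $\{e_{xy}\mid x\le y\}$ we conclude $\vf=M_\sg$.

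First I would fix $x<y$ and use the decomposition $e_{xy}=e_xe_{xy}e_y+e_ye_{xy}e_x$ (the second summand vanishes since $e_{xy}e_x=\dl(y,x)e_{yy}=0$). Applying the polarised identity \cref{vf(abc+cba)} with $(a,b,c)=(e_x,e_{xy},e_y)$ and using $\vf(e_x)=e_x$, $\vf(e_y)=e_y$ yields $\vf(e_{xy})=e_x\vf(e_{xy})e_y+e_y\vf(e_{xy})e_x$. Now $e_xge_y=g(x,y)e_{xy}$ and $e_yge_x=g(y,x)e_{yx}=0$ for every $g\in I(X,F)$ (because $y\not\le x$), so $\vf(e_{xy})=\sg(x,y)e_{xy}$ with $\sg(x,y):=\vf(e_{xy})(x,y)$. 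Bijectivity of $\vf$ then forces $\sg(x,y)\ne 0$, since otherwise $e_{xy}\in\Ker\vf$.

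Next, for $x<y<z$ I would use $e_{xy}\circ e_{yz}=e_{xz}$ (again $e_{yz}e_{xy}=\dl(z,x)e_{yy}=0$ as $x\ne z$) together with the fact that $\vf$ preserves the Jordan product, i.e. \cref{vf(ab+ba)}, which is a consequence of \cref{vf(a^2)}. Comparing $\vf(e_{xz})=\sg(x,z)e_{xz}$ with $\vf(e_{xy})\circ\vf(e_{yz})=\sg(x,y)\sg(y,z)(e_{xy}\circ e_{yz})=\sg(x,y)\sg(y,z)e_{xz}$ gives $\sg(x,z)=\sg(x,y)\sg(y,z)$. Thus $\sg(x,y)\in F^*$ for all $x\le y$ and $\sg$ satisfies the cocycle relation, so it is a multiplicative element of $I(X,F)$ in the sense of \cref{sec-prelim}; hence $M_\sg$ is an automorphism and $\vf=M_\sg$.

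I do not expect a genuine obstacle here: the substantive work — conjugating a Jordan automorphism so that it fixes every $e_x$ — was already carried out in \cref{vf(e_x)-conj-to-e_lb(x),lb-auto-or-anti-auto}, and the argument above is essentially bookkeeping on the standard basis. The only points worth double-checking are that it is precisely bijectivity of $\vf$ that rules out $\sg(x,y)=0$, and that the symmetrised identities \cref{vf(ab+ba),vf(abc+cba)} invoked above do follow from the defining relations \cref{vf(a^2),vf(aba)} of a Jordan homomorphism by polarisation, so they are available for a Jordan automorphism.
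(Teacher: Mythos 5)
Your proof is correct and follows essentially the same route as the paper: define $\sg(x,y)=\vf(e_{xy})(x,y)$, show $\vf(e_{xy})=\sg(x,y)e_{xy}$ via the decomposition $e_{xy}=e_xe_{xy}e_y+e_ye_{xy}e_x$ and \cref{vf(abc+cba)}, and obtain the cocycle identity from $e_{xz}=e_{xy}\circ e_{yz}$. The only difference is that the paper simply cites the already-established identity \cref{vf(e_xy)=e_lb(x)vf(e_xy)e_lb(y)} (with $\lb=\id$) instead of re-deriving it, and your explicit remark that bijectivity forces $\sg(x,y)\ne 0$ is a welcome point that the paper leaves implicit.
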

	\begin{proof}
		Define $\sg(x,y)=\vf(e_{xy})(x,y)$ for all $x\le y$. Thus, $\sg(x,x)=1$ and $\vf(e_{xy})=\sg(x,y)e_{xy}$ for all $x<y$ in view of \cref{vf(e_xy)=e_lb(x)vf(e_xy)e_lb(y)}. It only remains to show that $\sg(x,z)=\sg(x,y)\sg(y,z)$, whenever $x\le y\le z$. This is trivial, if $x=y$ or $y=z$, so let $x<y<z$. It follows from $e_{xz}=e_{xy}\circ e_{yz}$ that
		\begin{align*}
		\sg(x,z)e_{xz}&=\vf(e_{xz})=\vf(e_{xy})\circ\vf(e_{yz})\\
		&=\sg(x,y)\sg(y,z)(e_{xy}\circ e_{yz})=\sg(x,y)\sg(y,z)e_{xz},
		\end{align*}
		so $\sg(x,z)=\sg(x,y)\sg(y,z)$, as desired.
	\end{proof}
	
	\begin{thrm}\label{Jord-auto-is-auto-or-anti-auto}
		Let $X$ be a finite connected poset and $F$ a field. Then any Jordan automorphism of $I(X,F)$ is either an automorphism or an anti-automorphism.
	\end{thrm}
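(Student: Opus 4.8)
The plan is simply to chain together the three reduction steps provided by \cref{vf(e_x)-conj-to-e_lb(x),lb-auto-or-anti-auto,vf-mult-auto}, keeping careful track at each stage of whether the factor we peel off is an automorphism or an anti-automorphism of $I(X,F)$. The underlying observation is the closure fact that the composition of a Jordan automorphism of $I(X,F)$ with an automorphism or an anti-automorphism of $I(X,F)$ is again a Jordan automorphism; this is immediate from \cref{vf(a^2),vf(aba)}, since automorphisms and anti-automorphisms themselves satisfy \cref{vf(a^2),vf(aba)} and these identities are stable under composition (for an anti-automorphism $\vt$ one uses $\vt(a^2)=\vt(a)^2$ and $\vt(aba)=\vt(a)\vt(b)\vt(a)$).

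First I would apply \cref{vf(e_x)-conj-to-e_lb(x)} to a Jordan automorphism $\vf$: it gives an invertible $\bt\in I(X,F)$ and a bijection $\lb\colon X\to X$ with $\vf(e_x)=\bt e_{\lb(x)}\bt\m$ for all $x\in X$. Letting $\psi$ denote conjugation by $\bt$ (an inner automorphism) and replacing $\vf$ by $\psi\m\circ\vf$, I may assume $\vf(e_x)=e_{\lb(x)}$ for all $x$. By \cref{lb-auto-or-anti-auto}, $\lb$ is then either an order automorphism or an order anti-automorphism of $X$, so the induced map $\widehat\lb$ is respectively an automorphism or an anti-automorphism of $I(X,F)$, and in both cases $\widehat\lb(e_x)=e_{\lb(x)}$. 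Replacing $\vf$ by $(\widehat\lb)\m\circ\vf$ --- still a Jordan automorphism by the closure fact --- I reduce to $\vf(e_x)=e_x$ for all $x\in X$, and \cref{vf-mult-auto} now identifies this reduced map with a multiplicative automorphism $M_\sg$.

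Unwinding the reductions, the original map equals $\psi\circ\widehat\lb\circ M_\sg$, where $\psi$ and $M_\sg$ are automorphisms. Hence, if $\lb$ is an order automorphism then $\widehat\lb$ is an automorphism and the composition is an automorphism of $I(X,F)$, while if $\lb$ is an order anti-automorphism then $\widehat\lb$ is an anti-automorphism and the composition is an anti-automorphism of $I(X,F)$, which is the claim. I do not anticipate a genuine obstacle: all the mathematical substance sits in the three lemmas, and what remains is the bookkeeping of composing (anti-)automorphisms; the one point deserving a line of verification is the closure fact in the anti-automorphism case, i.e.\ that $(\widehat\lb)\m\circ\vf$ remains a Jordan automorphism when $\widehat\lb$ is an anti-automorphism, which follows routinely from \cref{vf(a^2),vf(aba)}.
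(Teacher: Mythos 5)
Your proposal is correct and follows exactly the paper's route: the paper proves the theorem by citing the same three lemmas, with the same intermediate reductions (compose with an inner automorphism to get $\vf(e_x)=e_{\lb(x)}$, then with $(\widehat\lb)\m$ to get $\vf(e_x)=e_x$, then invoke \cref{vf-mult-auto}). Your explicit verification of the closure fact — that composing a Jordan automorphism with an (anti-)automorphism yields a Jordan automorphism — is a point the paper leaves implicit, and it is handled correctly.
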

	\begin{proof}
		This follows from \cref{vf(e_x)-conj-to-e_lb(x),lb-auto-or-anti-auto,vf-mult-auto}.
	\end{proof}
	
	As a consequence of \cref{Idemp-pres-is-Jordan-homo,Jord-auto-is-auto-or-anti-auto} we obtain the following.
	\begin{cor}\label{bijective-idemp-pres-is-auto-or-anti-auto}
		Let $X$ be a finite connected poset and $F$ a field with $\ch(F)\ne 2$. Then any bijective linear idempotent preserver $I(X,F)\to I(X,F)$ is either an automorphism or an anti-automorphism.
	\end{cor}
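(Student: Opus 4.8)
The plan is to chain together \cref{Idemp-pres-is-Jordan-homo} and \cref{Jord-auto-is-auto-or-anti-auto}, using the $2$-torsion-freeness of the codomain to bridge between ``preserves the Jordan product'' and ``is a Jordan automorphism''.

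First, since $\ch(F)\ne 2$, the field $F$ is $2$-torsion-free, and hence so is $I(X,F)$ regarded as an $F$-algebra. Let $\vf\colon I(X,F)\to I(X,F)$ be a bijective $F$-linear idempotent preserver. As $X$ is finite and $R=F$ is $2$-torsion-free, \cref{Idemp-pres-is-Jordan-homo} applies with $B=I(X,F)$ and shows that $\vf$ preserves the Jordan product, i.e.\ $\vf(a\circ b)=\vf(a)\circ\vf(b)$ for all $a,b\in I(X,F)$.

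Next I would upgrade this to a Jordan homomorphism. As recorded in the subsection on Jordan maps, when the codomain is $2$-torsion-free the identity \cref{vf(ab+ba)} implies \cref{vf(a^2)} and \cref{vf(aba)}; since $I(X,F)$ is $2$-torsion-free, $\vf$ is therefore a Jordan homomorphism, and being bijective it is a Jordan automorphism of $I(X,F)$. Finally, $X$ is finite and connected, so \cref{Jord-auto-is-auto-or-anti-auto} gives that $\vf$ is an automorphism or an anti-automorphism of $I(X,F)$. (The converse implication, that every automorphism and anti-automorphism is a bijective linear idempotent preserver, is immediate from the remarks in the subsection on potent preservers, so this completes part (i) of the main theorem.)

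There is no real obstacle here: all the substantive work is done in the preceding sections. The only point requiring a touch of care is that \cref{Idemp-pres-is-Jordan-homo} as stated delivers only preservation of the Jordan product, so one must invoke $2$-torsion-freeness of the codomain a second time to obtain a genuine Jordan homomorphism before \cref{Jord-auto-is-auto-or-anti-auto} becomes applicable.
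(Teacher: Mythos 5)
Your proof is correct and follows exactly the paper's route: the paper derives this corollary directly from \cref{Idemp-pres-is-Jordan-homo} and \cref{Jord-auto-is-auto-or-anti-auto}, with the upgrade from ``preserves the Jordan product'' to ``Jordan automorphism'' via $2$-torsion-freeness left implicit. You have merely made that implicit bridging step explicit, which is the right thing to do.
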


	\section{Idempotent preservers of $I(X,\mathbb{Z}_2)$}\label{sec-I(X_Z_2)}
	
	\subsection{Some general facts on idempotent preservers in characteristic $2$}
	\begin{rem}\label{e+f-idempotent}
		Let $A$ be an algebra over a ring $R$ with $\ch(R)=2$ and $e,f\in E(A)$. Then $e+f\in E(A)$ if and only if $ef=fe$.
		
		Indeed, $(e+f)^2=e^2+ef+fe+f^2=e+f+ef+fe$, and over a ring of characteristic $2$ anti-commutativity is the same as commutativity.
	\end{rem}
	
	\begin{lem}\label{vf(e)vf(f)=vf(f)vf(e)}
		Let $A$ and $B$ be algebras over a ring $R$ with $\ch(R)=2$ and $\vf:A\to B$ an additive idempotent preserver. Then $\vf$ maps commuting idempotents of $A$ to commuting idempotents of $B$.
	\end{lem}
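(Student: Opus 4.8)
The plan is to reduce the statement to a single expansion of a square, exactly as in \cref{e+f-idempotent}, applied on both sides of $\vf$. First I would take commuting idempotents $e,f\in E(A)$, so that $ef=fe$; since $\ch(R)=2$, \cref{e+f-idempotent} immediately gives $e+f\in E(A)$ as well.

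Next I would push this through $\vf$. By additivity, $\vf(e+f)=\vf(e)+\vf(f)$, and since $\vf$ preserves idempotents, the three elements $\vf(e)$, $\vf(f)$ and $\vf(e)+\vf(f)$ all lie in $E(B)$. Expanding $(\vf(e)+\vf(f))^2=\vf(e)+\vf(f)$ and cancelling $\vf(e)^2=\vf(e)$ and $\vf(f)^2=\vf(f)$ yields $\vf(e)\vf(f)+\vf(f)\vf(e)=0$. Because $B$ is an $R$-algebra with $\ch(R)=2$, we have $2b=0$ for all $b\in B$, so this anti-commutativity relation is exactly $\vf(e)\vf(f)=\vf(f)\vf(e)$; hence $\vf(e)$ and $\vf(f)$ are commuting idempotents of $B$, as required.

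I do not expect any real obstacle: the result is a one-line consequence of \cref{e+f-idempotent} used in $A$ and then (in reverse, via the expansion of the square) in $B$. The only points worth making explicit are that mere additivity of $\vf$ suffices — no $R$-linearity is needed — and that the characteristic-$2$ hypothesis on $R$ transfers to $B$, which is what converts the relation $\vf(e)\vf(f)+\vf(f)\vf(e)=0$ into genuine commutativity.
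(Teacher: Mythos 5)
Your proof is correct and follows exactly the paper's argument: apply \cref{e+f-idempotent} in $A$ to get $e+f\in E(A)$, use additivity and idempotent preservation to get $\vf(e)+\vf(f)\in E(B)$, and then invoke the characteristic-$2$ equivalence again in $B$ (which you re-derive by expanding the square rather than citing \cref{e+f-idempotent} a second time, but this is the same computation).
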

	\begin{proof}
		Let $e$ and $f$ be commuting idempotents of $A$. Then $e+f\in E(A)$ by \cref{e+f-idempotent}. Hence, $\vf(e)+\vf(f)=\vf(e+f)\in E(B)$, so $\vf(e)$ and $\vf(f)$ commute by \cref{e+f-idempotent}.
	\end{proof}
	
	\begin{cor}\label{vf-maps-E(C_A(e))-to-E(C_B(vf(e)))}
		Let $A$ and $B$ be algebras over a ring $R$ with $\ch(R)=2$ and $\vf:A\to B$ an additive idempotent preserver. For any $e\in E(A)$ we have 
		\begin{align}\label{vf(E(C_A(e)))-sst-E(C_B(vf(e)))}
		\vf(E(C_A(e)))\sst E(C_B(\vf(e))).
		\end{align}
		Moreover, if $\vf$ is surjective and strong, then \cref{vf(E(C_A(e)))-sst-E(C_B(vf(e)))} becomes equality.
	\end{cor}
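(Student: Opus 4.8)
The plan is to get the inclusion \cref{vf(E(C_A(e)))-sst-E(C_B(vf(e)))} straight out of \cref{vf(e)vf(f)=vf(f)vf(e)}, and then to obtain the reverse inclusion, under the extra hypotheses, by running the equivalence of \cref{e+f-idempotent} backwards with the help of strongness.

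First I would prove $\vf(E(C_A(e)))\sst E(C_B(\vf(e)))$. Let $g\in E(C_A(e))$, i.e. $g\in E(A)$ and $ge=eg$. Then $\vf(g)\in E(B)$ since $\vf$ preserves idempotents, and $\vf(g)\vf(e)=\vf(e)\vf(g)$ by \cref{vf(e)vf(f)=vf(f)vf(e)} applied to the commuting idempotents $g,e$. Hence $\vf(g)\in E(B)\cap C_B(\vf(e))=E(C_B(\vf(e)))$.

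Now suppose $\vf$ is surjective and strong; it remains to show $E(C_B(\vf(e)))\sst\vf(E(C_A(e)))$. Given $h\in E(C_B(\vf(e)))$, surjectivity provides $g\in A$ with $\vf(g)=h$, and strongness in its contrapositive form, $\vf(a)\in E(B)\impl a\in E(A)$, forces $g\in E(A)$ because $h\in E(B)$. Since $\vf(g)=h$ commutes with the idempotent $\vf(e)$, \cref{e+f-idempotent} gives $\vf(g+e)=\vf(g)+\vf(e)\in E(B)$; strongness applied once more yields $g+e\in E(A)$, and then \cref{e+f-idempotent}, this time for the idempotents $g,e$ of $A$, forces $ge=eg$. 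Thus $g\in E(A)\cap C_A(e)=E(C_A(e))$ and $h=\vf(g)\in\vf(E(C_A(e)))$, completing the proof.

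The only delicate point, and hardly a real obstacle, is the correct bookkeeping of the ``strong'' hypothesis: it is used exactly twice, to transport the idempotency of $\vf(g)$ and of $\vf(g+e)$ back from $B$ to $A$; note in particular that no injectivity of $\vf$ is needed.
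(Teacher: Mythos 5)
Your proof is correct and follows essentially the same route as the paper: the inclusion comes directly from \cref{vf(e)vf(f)=vf(f)vf(e)}, and the reverse inclusion uses surjectivity to lift $h$, strongness to pull idempotency of $\vf(g)$ and of $\vf(g+e)=\vf(g)+\vf(e)$ back to $A$, and \cref{e+f-idempotent} to conclude that $g$ and $e$ commute. Your bookkeeping of where strongness is used (exactly twice) and the observation that injectivity is not needed are both accurate.
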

	\begin{proof}
		The inclusion \cref{vf(E(C_A(e)))-sst-E(C_B(vf(e)))} follows from \cref{vf(e)vf(f)=vf(f)vf(e)}. Assume that $f$ is surjective and strong. Let $b\in E(C_B(\vf(e)))$. By surjectivity there exists $a\in A$ such that $\vf(a)=b$, and by strongness $a\in E(A)$. Since $\vf(a)$ commutes with $\vf(e)$, then $\vf(a+e)=\vf(a)+\vf(e)\in E(B)$. Therefore, $a+e\in E(A)$, because $\vf$ is strong. By \cref{e+f-idempotent} the idempotents $a$ and $e$ commute, thus $a\in E(C_A(e))$.
	\end{proof}
	
	
	\subsection{Idempotents of $I(X,R)$ and their centralizers}
	
	\begin{lem}[Private conversation with M. Dugas and D. Herden]\label{diagonalization-idemp}
		Let $\{\af_i\}_{i=1}^n$ be a set of pairwise commuting idempotents of $I(X,R)$. Then there exists an invertible $\bt\in I(X,R)$ such that $\af_i=\bt(\af_i)_D\bt\m$ for all $1\le i\le n$.
	\end{lem}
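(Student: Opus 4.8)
The plan is to simultaneously diagonalize the family $\{\af_i\}$ by building, one idempotent at a time, a conjugating element that works for all of them. Since idempotents in an incidence algebra over a connected poset (or more generally when $R$ is indecomposable, so that the $e_x$ are primitive) are conjugate to sums of the $e_x$, the natural strategy is an induction on $n$ using a compatibility argument. First I would recall the single-idempotent case: any idempotent $\af\in I(X,R)$ is conjugate to its diagonal part $\af_D$ (this is essentially \cite[Lemma 1]{Khripchenko-Novikov09}; an explicit conjugator is available, e.g. one can take $\bt_\af$ of the form $\dl + n_\af$ with $n_\af\in J(I(X,R))$, so that $\bt_\af$ fixes the diagonal). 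By replacing all the $\af_i$ by $\bt_1\m\af_i\bt_1$ we may assume $\af_1 = (\af_1)_D$ is diagonal; note this does not change the diagonal part of any $\af_i$, nor does it destroy pairwise commutativity, so the hypotheses are preserved and it suffices to produce an invertible element $\gm$ with $\gm\m\af_i\gm$ diagonal for all $i$ and with $\gm$ \emph{commuting with $\af_1$}, since then the composite $\bt_1\gm$ will conjugate $\af_1$ back to a diagonal idempotent as well.

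The inductive step is where the real work lies. Assuming $\af_1,\dots,\af_{k}$ are already diagonal (say after conjugation), consider $\af_{k+1}$, which commutes with each diagonal idempotent $\af_j = e_{A_j}$, $j\le k$. Commutation of $\af_{k+1}$ with a diagonal idempotent $e_A$ forces $\af_{k+1}$ to be ``block upper triangular'' with respect to the decomposition $\dl = e_A + e_{X\setminus A}$, i.e. $\af_{k+1} = e_A\af_{k+1}e_A + e_{X\setminus A}\af_{k+1}e_{X\setminus A}$. Intersecting over $j=1,\dots,k$, the off-diagonal part of $\af_{k+1}$ lives only on pairs $x<y$ that lie in the same block of the common refinement of the partitions $\{A_j, X\setminus A_j\}$. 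Now diagonalize $\af_{k+1}$ by a conjugator $\bt_{k+1} = \dl + n$ with $n\in J(I(X,R))$ supported on exactly those pairs; one checks that such $n$ can be chosen inside each common block, so that $\bt_{k+1}$ commutes with every $e_{A_j}$, $j\le k$. Conjugating by $\bt_{k+1}$ then makes $\af_{k+1}$ diagonal while leaving $\af_1,\dots,\af_k$ untouched (they commute with $\bt_{k+1}$ and are already diagonal). Composing all the $\bt_i$ gives the desired $\bt$.

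For the bookkeeping I would lean on the elementary lemmas already proved: \cref{distr-direct-sum,inter-spans} are exactly the tools needed to handle the intersection of the ``block'' submodules and to see that the common refinement controls the support of the remaining off-diagonal part, so that the conjugator at each stage can be taken supported on the right set of matrix units. The main obstacle, and the point requiring genuine care, is verifying that the diagonalizing conjugator $\bt_{k+1}$ for $\af_{k+1}$ can be chosen to commute with the previously-diagonalized idempotents $\af_1,\dots,\af_k$ — equivalently, that the ``correction term'' $n$ solving the diagonalization equation for $\af_{k+1}$ respects the block structure imposed by the $A_j$. This comes down to the observation that if $\af_{k+1}$ is already block-diagonal for each partition $\{A_j,X\setminus A_j\}$, then the standard recursive construction of its diagonalizing conjugator (proceeding along the partial order, solving $\af_{k+1}$ into diagonal plus radical) never mixes matrix units across blocks, hence the conjugator inherits the same block-diagonality and therefore commutes with each $e_{A_j}$.
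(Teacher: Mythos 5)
Your proposal is correct in outline but takes a genuinely different route from the paper. The paper avoids induction entirely: writing $f^1:=f$, $f^0:=\dl-f$ and $\ve_i:=(\af_i)_D$, it exhibits the single explicit conjugator $\bt=\sum_{p_1,\dots,p_n\in\{0,1\}}\af_1^{p_1}\cdots\af_n^{p_n}\ve_1^{p_1}\cdots\ve_n^{p_n}$, checks $\bt\ve_i=\af_i\bt$ (on either side only the summands with $p_i=1$ survive, using pairwise commutativity and orthogonality of $f^1,f^0$) and shows $\bt_D=\dl$ by a telescoping computation, so one invertible element works for all $i$ simultaneously. Your inductive scheme also works, and you correctly isolate its one delicate point --- that the diagonalizer of $\af_{k+1}$ can be chosen to commute with the already diagonal $e_{A_1},\dots,e_{A_k}$ --- but the justification you give for it is the weakest link. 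I would not lean on ``the standard recursive construction never mixes matrix units across blocks'': over a general commutative $R$ that recursion solves $n(x,y)\bigl(\af(y,y)-\af(x,x)\bigr)=\cdots$, and the coefficient is a difference of idempotents of $R$, not generally invertible, so even the one-idempotent case needs care in this form. Two clean ways to close the step: (a) by \cref{basis-of-C_I(e_A)} the subalgebra $\bigcap_{j\le k}C_{I(X,R)}(e_{A_j})$ is the direct product of the incidence algebras of the blocks of the common refinement, each $e_{A_j}$ is a sum of identity elements of factors and hence central there, and $\af_{k+1}$ lies in this subalgebra, so it suffices to diagonalize it blockwise inside the subalgebra; or (b) take the explicit one-idempotent conjugator $\bt_{k+1}=\af_{k+1}(\af_{k+1})_D+(\dl-\af_{k+1})(\dl-(\af_{k+1})_D)$ (the $n=1$ instance of the paper's formula), which is a polynomial in elements commuting with every $e_{A_j}$ and therefore commutes with them for free. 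What the paper's approach buys is brevity and no block-structure bookkeeping; what yours buys is the conceptual picture of simultaneous diagonalization via common invariant decompositions, at the price of the extra verification above.
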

	\begin{proof}
		For any $f\in E(I(X,R))$ introduce the following notation: $f^1:=f$ and $f^0=\dl-f$. Thus, $f^p\in E(I(X,R))$ and $f^p$ is orthogonal to $f^{1-p}$ for any $p\in\{0,1\}$. Moreover, if $f$ and $g$ are commuting idempotents, then $f^p$ and $g^q$ are also commuting idempotents for all $p,q\in\{0,1\}$.  
		
		Denote $\ve_i=(\af_i)_D$, $1\le i\le n$, and set
		\begin{align*}
		\bt=\sum_{p_1,\dots,p_n\in\{0,1\}}\af_1^{p_1}\dots\af_n^{p_n}\ve_1^{p_1}\dots\ve_n^{p_n}.
		\end{align*}
		Observe that 
		\begin{align*}
		\bt\ve_i=\sum_{p_1,\dots,p_n\in\{0,1\},\  p_i=1}\af_1^{p_1}\dots\af_n^{p_n}\ve_1^{p_1}\dots\ve_n^{p_n}=\af_i\bt
		\end{align*}
		for all $1\le i\le n$. It remains to prove that $\bt$ is invertible. Since $(\af_i^{p_i})_D=\ve_i^{p_i}$ for all $1\le i\le n$, we have
		\begin{align*}
		\bt_D&=\sum_{p_1,\dots,p_n\in\{0,1\}}\ve_1^{p_1}\dots\ve_n^{p_n}\\
		&=\sum_{p_2,\dots,p_n\in\{0,1\}}\ve_1\ve_2^{p_2}\dots\ve_n^{p_n}+\sum_{p_2,\dots,p_n\in\{0,1\}}(\dl-\ve_1)\ve_2^{p_2}\dots\ve_n^{p_n}\\
		&=\sum_{p_2,\dots,p_n\in\{0,1\}}(\ve_1+\dl-\ve_1)\ve_2^{p_2}\dots\ve_n^{p_n}=\sum_{p_2,\dots,p_n\in\{0,1\}}\ve_2^{p_2}\dots\ve_n^{p_n}.
		\end{align*}
		By the obvious induction $\bt_D=\dl$.
	\end{proof}
	
	\begin{lem}\label{C_I(e_x)-descr}
		Let $A\sst X$ and $f\in I(X,R)$. Then $f\in C_{I(X,R)}(e_A)\iff f(x,a)=f(a,y)=0$ for all $a\in A$ and $x,y\not\in A$ with $x<a<y$.
	\end{lem}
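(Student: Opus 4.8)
The plan is to unwind the condition $fe_A=e_Af$ directly in the standard basis and read it off entry-by-entry. First I would recall that $e_A=\sum_{a\in A}e_{aa}$, that $e_{xy}e_{uv}=\dl(y,u)e_{xv}$, and that $f=\sum_{x\le y}f(x,y)e_{xy}$ with the usual convention $f(x,y)=0$ whenever $x\nleq y$. Multiplying out and keeping only the surviving terms gives
\begin{align*}
fe_A=\sum_{a\in A}\sum_{x\le a}f(x,a)e_{xa},\qquad e_Af=\sum_{a\in A}\sum_{a\le y}f(a,y)e_{ay},
\end{align*}
so that $(fe_A)(u,v)=f(u,v)$ when $v\in A$ and $(fe_A)(u,v)=0$ when $v\notin A$, while $(e_Af)(u,v)=f(u,v)$ when $u\in A$ and $(e_Af)(u,v)=0$ when $u\notin A$. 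In other words, $fe_A$ is $f$ with all columns outside $A$ killed, and $e_Af$ is $f$ with all rows outside $A$ killed.

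Next I would observe that $f\in C_{I(X,R)}(e_A)\iff fe_A=e_Af$, and by the previous paragraph this is equivalent to $(fe_A)(u,v)=(e_Af)(u,v)$ for all $u\le v$. Splitting according to the membership of $u$ and $v$ in $A$: if $u,v\in A$ or $u,v\notin A$ the two entries already agree; if $u\in A$ and $v\notin A$ (which forces $u\ne v$, hence $u<v$) the equality reads $0=f(u,v)$; and if $u\notin A$ and $v\in A$ (again forcing $u<v$) it reads $f(u,v)=0$. Hence $fe_A=e_Af$ holds if and only if $f(u,v)=0$ whenever $u<v$ and exactly one of $u,v$ lies in $A$, which after renaming is exactly the stated condition: $f(x,a)=0$ for $a\in A$, $x\notin A$, $x<a$, and $f(a,y)=0$ for $a\in A$, $y\notin A$, $a<y$.

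There is no genuine obstacle here; the statement is a routine unwinding of the convolution product. The only points needing a little care are respecting the convention $f(x,y)=0$ for $x\nleq y$ when reading off the entries of $fe_A$ and $e_Af$, and noting that whenever $u$ and $v$ lie on opposite sides of $A$ they are automatically distinct and comparable, so that no spurious constraint on diagonal entries $f(x,x)$ ever appears.
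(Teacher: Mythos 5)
Your proof is correct and follows essentially the same route as the paper: both reduce to the observation that $e_Af$ kills the rows of $f$ outside $A$ while $fe_A$ kills the columns outside $A$, and then compare entries in the same four cases according to membership of the two indices in $A$. The basis expansion you use is just a restatement of the paper's direct convolution computation.
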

	\begin{proof}
		Let $x\le y$. Consider all the possible cases.
		
		{\it Case 1.} $x,y\in A$. Then $(e_Af)(x,y)=f(x,y)=(fe_A)(x,y)$.
		
		{\it Case 2.} $x,y\not\in A$. Then $(e_Af)(x,y)=0=(fe_A)(x,y)$.
		
		{\it Case 3.} $x\in A$ and $y\not\in A$. Then $(e_Af)(x,y)=f(x,y)$ and $(fe_A)(x,y)=0$.
		
		{\it Case 4.} $x\not\in A$ and $y\in A$. Then $(e_Af)(x,y)=0$ and $(fe_A)(x,y)=f(x,y)$.
		
		Thus, for $e_Af=fe_A$ it is necessary and sufficient that $f(x,y)=0$ whenever either $x\in A, y\not\in A$ or $x\not\in A, y\in A$.
	\end{proof}
	
	Given $A\sst X$, we set $A^2_<:=\{(x,y)\in A^2\mid x<y\}$.
	
	\begin{cor}\label{basis-of-C_I(e_A)}
		Let $X$ be finite and $A\sst X$. Then 
		\begin{align*}
		C_{I(X,R)}(e_A)&=\gen{e_{xy}\mid x,y\in A}\oplus\gen{e_{xy}\mid x,y\not\in A}\\
		&=D(X,R)\oplus\gen{e_{xy}\mid (x,y)\in A^2_<\sqcup (X\setminus A)^2_<}. 
		\end{align*}
		In particular, $C_{I(X,R)}(e_A)$ is a free $R$-module of rank $|X|+|A^2_<|+|(X\setminus A)^2_<|$.
	\end{cor}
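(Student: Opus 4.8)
The plan is to read the statement directly off the support description obtained in \cref{C_I(e_x)-descr}. Recall that, $X$ being finite, $\{e_{xy}\mid x\le y\}$ is a basis of $I(X,R)$ and $f=\sum_{x\le y}f(x,y)e_{xy}$ for every $f$. By the computation in the proof of \cref{C_I(e_x)-descr} (its concluding sentence), $f\in C_{I(X,R)}(e_A)$ if and only if $f(x,y)=0$ whenever $x\le y$ and exactly one of $x,y$ lies in $A$. Equivalently, $C_{I(X,R)}(e_A)$ consists precisely of those $f$ supported on the pairs $(x,y)$ with $x\le y$ for which either $\{x,y\}\sst A$ or $\{x,y\}\cap A=\emptyset$, so
\begin{align*}
C_{I(X,R)}(e_A)=\gen{e_{xy}\mid x\le y,\ \{x,y\}\sst A\text{ or }\{x,y\}\cap A=\emptyset}.
\end{align*}

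Next I would partition this index set. The pairs $(x,y)$ with $x\le y$ and $x,y\in A$ are exactly $\{(a,a)\mid a\in A\}\sqcup A^2_<$, and those with $x\le y$ and $x,y\not\in A$ are exactly $\{(a,a)\mid a\not\in A\}\sqcup(X\setminus A)^2_<$; these four subsets of the standard basis are pairwise disjoint, and their union is the index set above. Since the standard basis is linearly independent, \cref{inter-spans} shows that spans indexed by disjoint subsets of the basis meet only in $\{0\}$, so all the sums of spans below are direct. Grouping the index set as "both inside $A$'' versus "both outside $A$'' gives the first decomposition $\gen{e_{xy}\mid x,y\in A}\oplus\gen{e_{xy}\mid x,y\not\in A}$. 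Grouping instead the two diagonal pieces together — their union is $\{(x,x)\mid x\in X\}$, which spans $D(X,R)$ — and keeping the two strictly-upper pieces separate gives $D(X,R)\oplus\gen{e_{xy}\mid(x,y)\in A^2_<\sqcup(X\setminus A)^2_<}$.

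Finally, freeness and the rank formula are immediate: $C_{I(X,R)}(e_A)$ is the $R$-span of a subset of the standard basis, hence a free $R$-module, and its rank is the size of that subset, namely $|X|+|A^2_<|+|(X\setminus A)^2_<|$ (the $|X|$ coming from the diagonal part $D(X,R)$). There is essentially no obstacle here: all the content sits in \cref{C_I(e_x)-descr}, and the only technical point is the routine appeal to \cref{inter-spans} to see that the sums are direct.
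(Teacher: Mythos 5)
Your proof is correct and is exactly the argument the paper intends: the corollary is stated without proof as an immediate consequence of \cref{C_I(e_x)-descr}, whose concluding sentence gives precisely the support condition you use, and the rest is the routine regrouping of disjoint subsets of the standard basis. Nothing to add.
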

	
	We will need a basis of $C_{I(X,R)}(e_A)$ consisting of idempotents.
	\begin{rem}
		If $X$ is finite, then $\{e_x\}_{x\in X}\sqcup\{e_x+e_{xy}\}_{x<y}\sst E(I(X,R))$ is a basis of $I(X,R)$.
	\end{rem}
	
	\begin{cor}\label{C_I(e_A)-basis}
		Let $X$ be finite and $A\sst X$. Then 
		$$
		C_{I(X,R)}(e_A)=D(X,R)\oplus\gen{e_x+e_{xy}\mid (x,y)\in A^2_<\sqcup (X\setminus A)^2_<}.
		$$
		In particular, $C_{I(X,R)}(e_A)=\gen{E(C_{I(X,R)}(e_A))}$.
	\end{cor}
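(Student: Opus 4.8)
The plan is to obtain this as an immediate change-of-generators consequence of \cref{basis-of-C_I(e_A)} together with the Remark stating that $\{e_x\}_{x\in X}\sqcup\{e_x+e_{xy}\}_{x<y}$ is a basis of $I(X,R)$ consisting of idempotents. By \cref{basis-of-C_I(e_A)} we already have $C_{I(X,R)}(e_A)=D(X,R)\oplus\gen{e_{xy}\mid (x,y)\in A^2_<\sqcup (X\setminus A)^2_<}$, so it suffices to check that replacing each generator $e_{xy}$ by $e_x+e_{xy}$ leaves the submodule unchanged and that the sum with $D(X,R)$ stays direct.

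First I would note that for each relevant pair $(x,y)$ one has $e_x+e_{xy}\in D(X,R)+\gen{e_{xy}\mid (x,y)\in A^2_<\sqcup (X\setminus A)^2_<}$ because $e_x\in D(X,R)$, and conversely $e_{xy}=(e_x+e_{xy})-e_x\in D(X,R)+\gen{e_x+e_{xy}\mid (x,y)\in A^2_<\sqcup (X\setminus A)^2_<}$; hence these two submodules of $I(X,R)$ coincide. For directness of the sum $D(X,R)+\gen{e_x+e_{xy}\mid (x,y)\in A^2_<\sqcup (X\setminus A)^2_<}$, I would invoke the Remark: the family $\{e_x\}_{x\in X}\sqcup\{e_x+e_{xy}\}_{x<y}$ is a basis, hence linearly independent over $R$, so the disjoint subfamilies $\{e_x\mid x\in X\}$ and $\{e_x+e_{xy}\mid (x,y)\in A^2_<\sqcup (X\setminus A)^2_<\}$ span submodules intersecting in $\{0\}$ by \cref{inter-spans}. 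This establishes the displayed direct-sum decomposition.

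For the ``in particular'' clause, I would observe that every $e_x$ lies in $E(C_{I(X,R)}(e_A))$, since $e_x\in D(X,R)\sst C_{I(X,R)}(e_A)$ is an idempotent, and every $e_x+e_{xy}$ with $(x,y)\in A^2_<\sqcup (X\setminus A)^2_<$ lies in $E(C_{I(X,R)}(e_A))$, being an idempotent by the Remark and lying in $C_{I(X,R)}(e_A)$ because both $e_x$ and $e_{xy}$ do (the latter by \cref{basis-of-C_I(e_A)}, or directly from \cref{C_I(e_x)-descr}). Thus the generating set in the displayed equality consists of idempotents of $C_{I(X,R)}(e_A)$, giving $C_{I(X,R)}(e_A)\sst\gen{E(C_{I(X,R)}(e_A))}$, and the reverse inclusion is trivial since $C_{I(X,R)}(e_A)$ is a subring of $I(X,R)$. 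No step here is a real obstacle; the only point requiring attention is keeping the new sum direct, which is precisely what the basis property in the Remark guarantees.
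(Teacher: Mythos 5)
Your proof is correct and takes exactly the route the paper intends: the corollary is stated without proof as an immediate consequence of \cref{basis-of-C_I(e_A)} and the preceding remark, and your change-of-generators step (absorbing each $e_x$ into $D(X,R)$), the directness argument via the basis property and \cref{inter-spans}, and the observation that all the displayed generators are idempotents of the centralizer are precisely the details being left to the reader.
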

	
	\begin{lem}\label{inter-C_I(e_A)=D+<e_xy>}
		Let $X$ be finite and $x<y$ from $X$. Then 
		\begin{align*}
		\bigcap_{A\supseteq\{x,y\}}C_{I(X,R)}(e_A)=D(X,R)\oplus\gen{e_{xy}}.
		\end{align*}
	\end{lem}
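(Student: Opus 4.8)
The plan is to reduce the left-hand intersection to an explicit span of standard basis vectors and then quote \cref{distr-direct-sum,inter-spans}. Since $X$ is finite, the family $\{A\sst X\mid A\supseteq\{x,y\}\}$ is finite and nonempty, so all intersections below are finite intersections.

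First, by \cref{C_I(e_A)-basis} (equivalently \cref{basis-of-C_I(e_A)}), for every such $A$ I would write $C_{I(X,R)}(e_A)=D(X,R)\oplus N_A$, where $N_A:=\gen{T_A}$ and $T_A:=\{e_{uv}\mid (u,v)\in A^2_<\sqcup(X\setminus A)^2_<\}$ is a subset of the standard basis $\{e_{uv}\mid u\le v\}$. That standard basis is linearly independent, being a free basis of $I(X,R)$, so \cref{inter-spans} gives $D(X,R)\cap\gen{e_{uv}\mid u<v}=\{0\}$; as every $N_A$, and hence $\sum_A N_A$, lies in $\gen{e_{uv}\mid u<v}$, \cref{distr-direct-sum} applies and yields
\[
\bigcap_{A\supseteq\{x,y\}}C_{I(X,R)}(e_A)=D(X,R)\oplus\bigcap_{A\supseteq\{x,y\}}N_A.
\]
Applying \cref{inter-spans} finitely many times then gives $\bigcap_A N_A=\gen{\bigcap_A T_A}$, so the statement reduces to proving $\bigcap_{A\supseteq\{x,y\}}T_A=\{e_{xy}\}$.

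To establish this I would fix $u<v$ in $X$ and note that $e_{uv}\in T_A$ exactly when $A$ fails to separate $u$ and $v$, i.e. $\{u,v\}\sst A$ or $\{u,v\}\cap A=\emptyset$. If $(u,v)=(x,y)$, then $\{u,v\}\sst A$ for all $A\supseteq\{x,y\}$, so $e_{xy}$ lies in every $T_A$. If $(u,v)\ne(x,y)$, then (since $u<v$ and $x<y$ force $\{u,v\}=\{x,y\}\Rightarrow(u,v)=(x,y)$) we have $|\{u,v\}\cap\{x,y\}|\le 1$: when the intersection has one element, $A=\{x,y\}$ contains exactly one of $u,v$; when it is empty, $A=\{x,y,u\}\supseteq\{x,y\}$ contains $u$ but not $v$. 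Either way some admissible $A$ separates $u$ and $v$, so $e_{uv}\notin\bigcap_A T_A$. Hence $\bigcap_{A\supseteq\{x,y\}}T_A=\{e_{xy}\}$, and combined with the displayed identity this gives $\bigcap_{A\supseteq\{x,y\}}C_{I(X,R)}(e_A)=D(X,R)\oplus\gen{e_{xy}}$.

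The only non-routine ingredient is the last paragraph's case analysis on how $\{u,v\}$ meets $\{x,y\}$, and even that is completely elementary; the rest is bookkeeping with \cref{basis-of-C_I(e_A),C_I(e_A)-basis,distr-direct-sum,inter-spans}.
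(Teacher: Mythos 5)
Your proof is correct, but it takes a different route from the paper's own argument for this lemma. The paper works element-wise: it takes $f$ in the intersection, applies the coordinate description of $C_{I(X,R)}(e_A)$ (\cref{C_I(e_x)-descr}) first with $A=\{x,y\}$ and then with $A=\{x,y,z\}$ for each $z\not\in\{x,y\}$, and successively kills every off-diagonal coefficient of $f$ except $f(x,y)$. You instead write each centralizer as $D(X,R)\oplus\gen{T_A}$ via \cref{C_I(e_A)-basis} and push the intersection through the direct sum and into the index sets using \cref{distr-direct-sum,inter-spans}, reducing everything to the combinatorial identity $\bigcap_{A\supseteq\{x,y\}}T_A=\{e_{xy}\}$; your case analysis there (using the witnesses $A=\{x,y\}$ and $A=\{x,y,u\}$, the same sets the paper uses) is complete and correct, including the observation that $u<v$ and $x<y$ rule out $\{u,v\}=\{x,y\}$ with $(u,v)\ne(x,y)$. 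The underlying combinatorics is thus identical, only packaged differently. Notably, your structural computation is precisely the one the paper performs later in the proof of \cref{vf(e_xy)-in-inter-centralizers}, where the unknown bijection $\xi$ prevents an explicit computation of the intersection of index sets and a dimension count is used instead; so your version makes the two arguments uniform, at the cost of invoking the two preliminary lemmas where the paper's direct computation does not need them.
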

	\begin{proof}
		The inclusion $D(X,R)\sst C_{I(X,R)}(e_A)$ is obvious, and the inclusion $\gen{e_{xy}}\sst C_{I(X,R)}(e_A)$ for all $A\supseteq\{x,y\}$ follows from \cref{C_I(e_x)-descr}. Thus, $D(X,R)\oplus\gen{e_{xy}}\sst \bigcap_{A\supseteq\{x,y\}}C_{I(X,R)}(e_A)$. Conversely, let $f\in \bigcap_{A\supseteq\{x,y\}}C_{I(X,R)}(e_A)$. Take $A=\{x,y\}$. Since $f\in C_{I(X,R)}(e_A)$, then 
		\begin{align*}
		f=f_D+f(x,y)e_{xy}+\sum_{u<v:\ u,v\not\in\{x,y\}}f(u,v)e_{uv}
		\end{align*}
		by \cref{C_I(e_x)-descr}. If $X=\{x,y\}$, we are done. Otherwise, take $A=\{x,y,z\}$, where $z\not\in\{x,y\}$. Since $f$ and $f_D+f(x,y)e_{xy}$ commute with $e_A$, then $g:=\sum_{u<v:\ u,v\not\in\{x,y\}}f(u,v)e_{uv}$ commutes with $e_A=e_x+e_y+e_z$. But it also commutes with $e_x+e_y$, so $g\in C_{I(X,R)}(e_z)$. Hence, $f(u,v)=0$ for all $u<v$ such that $z\in\{u,v\}$ by \cref{C_I(e_x)-descr}. It follows that
		\begin{align*}
		f=f_D+f(x,y)e_{xy}+\sum_{u<v:\ u,v\not\in\{x,y,z\}}f(u,v)e_{uv}.
		\end{align*}
		Applying consecutively this argument we finally prove that $f=f_D+f(x,y)e_{xy}\in D(X,R)\oplus\gen{e_{xy}}$.
	\end{proof}
	
	\subsection{The description of idempotent preservers of $I(X,\mathbb{Z}_2)$}
	
	From now on we focus on the situation where $X$ is a finite connected poset and $\vf$ is a bijective additive (and thus $\Z_2$-linear) idempotent preserver from $I(X,\Z_2)$ to itself. Since $I(X,\mathbb{Z}_2)$ is finite and $\vf$ is injective, then $\vf(E(I(X,\mathbb{Z}_2)))\sst E(I(X,\mathbb{Z}_2))$ implies $\vf(E(I(X,\mathbb{Z}_2)))=E(I(X,\mathbb{Z}_2))$. Hence, $\vf$ is strong.
	
	\begin{rem}\label{vf=eta-circ-psi}
		By \cref{diagonalization-idemp,vf(e)vf(f)=vf(f)vf(e)} we have $\vf=\eta\circ\psi$, where $\eta$ is an inner automorphism of $I(X,\Z_2)$ and $\psi(e_x)$ is a diagonal idempotent for all $x\in X$.
	\end{rem}
	
	Replacing $\vf$ by $\psi$ if necessary, we will assume that $\vf(e_x)$ is a diagonal idempotent for all $x\in X$. It follows that $\vf(e_A)$ is a diagonal idempotent for all $A\sst X$, and since $E(D(X,\Z_2))=D(X,\Z_2)$, we have 
	\begin{align}\label{vf-maps-D-to-D}
	\vf(D(X,\Z_2))=D(X,\Z_2).    
	\end{align}
	Thus, there exists a bijection $\xi:2^X\to 2^X$ such that 
	\begin{align}\label{vf(e_A)=e_xi(A)}
	\vf(e_A)=e_{\xi(A)}    
	\end{align}
	for all $A\sst X$. We write $\xi(x)$ for $\xi(\{x\})$.
	
	\begin{lem}\label{vf(C_I(e_A))=C_I(vf(e_A))}
		For any $A\sst X$ we have $\vf(C_{I(X,\Z_2)}(e_A))=C_{I(X,\Z_2)}\left(e_{\xi(A)}\right)$.
	\end{lem}
	\begin{proof}
		Since $\vf$ is strong, $\vf(E(C_{I(X,\Z_2)}(e_A)))=E(C_{I(X,\Z_2)}(e_{\xi(A)}))$ thanks to \cref{vf-maps-E(C_A(e))-to-E(C_B(vf(e)))}. By additivity and bijectivity we have 
		$$
		\vf(\gen{E(C_{I(X,\Z_2)}(e_A))})=\gen{E(C_{I(X,\Z_2)}(e_{\xi(A)}))}.
		$$
		It remains to use \cref{C_I(e_A)-basis}.
	\end{proof}
	
	\begin{lem}\label{vf(e_xy)-in-inter-centralizers}
		There exists a bijection $\0=\0_\vf:\B\to\B$ such that for all $x<y$ we have $\vf(\gen{e_{xy}}\oplus D(X,\Z_2))=\gen{\0(e_{xy})}\oplus D(X,\Z_2)$.
	\end{lem}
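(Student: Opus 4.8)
The plan is to derive this identity by pushing \cref{inter-C_I(e_A)=D+<e_xy>} forward through $\vf$. Fix $x<y$ in $X$. Being a linear bijection, $\vf$ commutes with arbitrary intersections of subspaces, so applying it to \cref{inter-C_I(e_A)=D+<e_xy>} (taken over $R=\Z_2$) and invoking \cref{vf(C_I(e_A))=C_I(vf(e_A))} on each term yields
\begin{align*}
\vf\bigl(\gen{e_{xy}}\oplus D(X,\Z_2)\bigr)=\bigcap_{A\supseteq\{x,y\}}\vf\bigl(C_{I(X,\Z_2)}(e_A)\bigr)=\bigcap_{A\supseteq\{x,y\}}C_{I(X,\Z_2)}\bigl(e_{\xi(A)}\bigr).
\end{align*}
So it remains to show that the right-hand side has the form $\gen{e_{uv}}\oplus D(X,\Z_2)$ for some pair $u<v$ in $X$, and to set $\0(e_{xy}):=e_{uv}$.

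To identify that intersection, I would first use \cref{basis-of-C_I(e_A)} to write each term as $C_{I(X,\Z_2)}(e_{\xi(A)})=D(X,\Z_2)\oplus\gen{S_A}$ with $S_A\sst\B$. Since $\sum_A\gen{S_A}\sst J(I(X,\Z_2))$ intersects $D(X,\Z_2)$ trivially, \cref{distr-direct-sum} turns the intersection into $D(X,\Z_2)\oplus\bigcap_A\gen{S_A}$, and, $\B$ being linearly independent, an iterated application of \cref{inter-spans} (the index family is finite) gives $\bigcap_A\gen{S_A}=\gen{\bigcap_A S_A}$. Hence the right-hand side above equals $D(X,\Z_2)\oplus\gen{\bigcap_{A\supseteq\{x,y\}}S_A}$, a subspace of dimension $|X|+\bigl|\bigcap_A S_A\bigr|$ (again because $\bigcap_A S_A\sst\B$ is linearly independent). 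On the other hand $\vf$ is a linear bijection, so the left-hand side has dimension $|X|+1$; comparing, $\bigl|\bigcap_{A\supseteq\{x,y\}}S_A\bigr|=1$. Let $\0(e_{xy})$ be its unique element, which belongs to $S_{\xi(\{x,y\})}\sst\B$. Then $\vf\bigl(\gen{e_{xy}}\oplus D(X,\Z_2)\bigr)=\gen{\0(e_{xy})}\oplus D(X,\Z_2)$, as required.

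Finally I would check that $\0:\B\to\B$ so defined is a bijection. It is well defined by construction; for injectivity, if $\0(e_{xy})=\0(e_{x'y'})$ with $x<y$ and $x'<y'$, then applying $\vf\m$ to the two corresponding instances of the identity just proved gives $\gen{e_{xy}}\oplus D(X,\Z_2)=\gen{e_{x'y'}}\oplus D(X,\Z_2)$, whence, writing $e_{x'y'}$ as an element of the right-hand side and comparing diagonal parts, $e_{x'y'}=e_{xy}$; as $\B$ is finite, $\0$ is then a bijection. I expect the only mildly delicate point to be the computation in the second paragraph: verifying that the hypotheses of \cref{distr-direct-sum,inter-spans} hold for the family $\{\gen{S_A}\}$ and carrying out the dimension count that forces the surviving off-diagonal subspace down to a single basis vector. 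The underlying idea --- transport \cref{inter-C_I(e_A)=D+<e_xy>} through $\vf$ and observe that exactly one off-diagonal direction survives the intersection --- is itself transparent.
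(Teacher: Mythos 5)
Your proposal is correct and follows essentially the same route as the paper: transport \cref{inter-C_I(e_A)=D+<e_xy>} through $\vf$ via \cref{vf(C_I(e_A))=C_I(vf(e_A))}, reduce the intersection with \cref{basis-of-C_I(e_A),distr-direct-sum,inter-spans}, and use the dimension count $|X|+1$ to isolate a single surviving basis vector. Your injectivity argument (pulling back by $\vf\m$ and comparing diagonal parts) is a minor cosmetic variant of the paper's, which instead invokes \cref{vf-maps-D-to-D}; both are fine.
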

	\begin{proof}
		By \cref{inter-C_I(e_A)=D+<e_xy>,vf(C_I(e_A))=C_I(vf(e_A)),basis-of-C_I(e_A),distr-direct-sum,inter-spans}
		\begin{align*}
		\vf(\gen{e_{xy}}\oplus D(X,\Z_2))&=\bigcap_{A\supseteq\{x,y\}}\vf(C_{I(X,\Z_2)}(e_A))
		=\bigcap_{A\supseteq\{x,y\}}C_{I(X,\Z_2)}(e_{\xi(A)})\\
		&=\bigcap_{A\supseteq\{x,y\}}\left(D(X,\Z_2)\oplus\gen{e_{uv}\mid (u,v)\in \xi(A)^2_<\sqcup (X\setminus \xi(A))^2_<}\right)\\
		&=D(X,\Z_2)\oplus\bigcap_{A\supseteq\{x,y\}}\gen{e_{uv}\mid (u,v)\in \xi(A)^2_<\sqcup (X\setminus \xi(A))^2_<}\\
		&=D(X,\Z_2)\oplus\gen{e_{uv}\ \Big\vert\ (u,v)\in \bigcap_{A\supseteq\{x,y\}}\left(\xi(A)^2_<\sqcup (X\setminus \xi(A))^2_<\right)}.
		\end{align*}
		But $\dim(\vf(\gen{e_{xy}}\oplus D(X,\Z_2)))=\dim(\gen{e_{xy}}\oplus D(X,\Z_2))=|X|+1$, so the intersection $\bigcap_{A\supseteq\{x,y\}}\left(\xi(A)^2_<\sqcup (X\setminus \xi(A))^2_<\right)$ consists of exactly one element $(u,v)$. We thus define $\0(e_{xy})=e_{uv}$. For the bijectivity of the map $\0:\B\to\B$ it suffices to prove that $\0$ is injective. Indeed, if $\0(e_{xy})=\0(e_{x'y'})=e_{uv}$, then $\vf(e_{xy})=e_{uv}+d$ and $\vf(e_{x'y'})=e_{uv}+d'$ for some $d,d'\in D(X,\Z_2)$. Therefore, $\vf(e_{xy}-e_{x'y'})=d-d'\in D(X,\Z_2)$, whence $e_{xy}-e_{x'y'}\in D(X,\Z_2)$ by \cref{vf-maps-D-to-D}. But this can happen only if $e_{xy}=e_{x'y'}$.
	\end{proof}
	
	Taking into account \cref{vf(e_xy)-in-inter-centralizers}, write 
	\begin{align}\label{vf(e_xy)=0(e_xy)+nu(e_xy)}
	\vf(e_{xy})=\0(e_{xy})+\nu(e_{xy}),    
	\end{align}
	where $\nu(e_{xy})\in D(X,\Z_2)$.
	
	\begin{rem}\label{e_uv-circ-d}
		Let $u<v$ and $d\in D(X,\Z_2)$. Then $d\circ e_{uv}=(d(u,u)+d(v,v))e_{uv}\in\{0,e_{uv}\}$.
	\end{rem}
	
	\begin{lem}\label{vf(e_xy)^2=nu(e_xy)}
		For all $x<y$ we have $\vf(e_{xy})^2=\nu(e_{xy})$.
	\end{lem}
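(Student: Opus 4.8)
The plan is to square $\vf(e_{xy})$ directly using the decomposition \cref{vf(e_xy)=0(e_xy)+nu(e_xy)}, and then to invoke the fact that $\vf$ is strong. I would first record three elementary observations: $\0(e_{xy})\in\B$, so $\0(e_{xy})^2=0$; $\nu(e_{xy})\in D(X,\Z_2)$ is an idempotent, since every diagonal element of $I(X,\Z_2)$ is idempotent; and, as $\ch(F)=2$, the cross term $\0(e_{xy})\nu(e_{xy})+\nu(e_{xy})\0(e_{xy})$ equals the Jordan product $\0(e_{xy})\circ\nu(e_{xy})$, which lies in $\{0,\0(e_{xy})\}$ by \cref{e_uv-circ-d}. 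Expanding the square and substituting these, we obtain
\begin{align*}
\vf(e_{xy})^2\in\bigl\{\nu(e_{xy}),\ \nu(e_{xy})+\0(e_{xy})\bigr\}=\bigl\{\nu(e_{xy}),\ \vf(e_{xy})\bigr\}.
\end{align*}

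It then remains to rule out the second alternative. If $\vf(e_{xy})^2=\vf(e_{xy})$, then $\vf(e_{xy})$ is an idempotent of $I(X,\Z_2)$; but $e_{xy}^2=0\ne e_{xy}$, so $e_{xy}$ is not an idempotent, while $\vf$ is strong, being a bijective idempotent preserver of the finite-dimensional algebra $I(X,\Z_2)$ (so $\vf(E(I(X,\Z_2)))=E(I(X,\Z_2))$, whence $\vf$ sends non-idempotents to non-idempotents). This contradiction forces $\vf(e_{xy})^2=\nu(e_{xy})$.

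I do not anticipate any genuine obstacle: the proof amounts to a short computation of the square followed by a single use of strongness. The only things to be careful about are that the (reduced) map $\vf$ is indeed strong, that over $\Z_2$ every diagonal element is an idempotent, and that \cref{e_uv-circ-d} is applicable precisely because $\0(e_{xy})\in\B$.
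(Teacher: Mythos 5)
Your proof is correct and follows essentially the same route as the paper's: expand $(\0(e_{xy})+\nu(e_{xy}))^2$ using $\0(e_{xy})^2=0$, idempotency of diagonal elements over $\Z_2$, and \cref{e_uv-circ-d} for the cross term, then exclude the alternative $\vf(e_{xy})^2=\vf(e_{xy})$ by strongness of $\vf$. Your version merely spells out the justifications that the paper leaves implicit.
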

	\begin{proof}
		Using \cref{e_uv-circ-d} we have $\vf(e_{xy})^2=(\0(e_{xy})+\nu(e_{xy}))^2=\0(e_{xy})^2+\0(e_{xy})\circ\nu(e_{xy})+\nu(e_{xy})^2=\0(e_{xy})\circ\nu(e_{xy})+\nu(e_{xy})\in\{\nu(e_{xy}),\vf(e_{xy})\}$. But $\vf(e_{xy})\not\in E(I(X,\Z_2))$, so $\vf(e_{xy})^2=\nu(e_{xy})$.
	\end{proof}
	
	\begin{cor}\label{nu(e_xy)-circ-0(e_xy)-is-zero}
		For all $x<y$ we have $\0(e_{xy})\in C_{I(X,\Z_2)}(\nu(e_{xy}))$.
	\end{cor}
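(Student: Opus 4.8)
The statement to prove is that $\0(e_{xy}) \in C_{I(X,\Z_2)}(\nu(e_{xy}))$, i.e.\ that $\0(e_{xy})$ and $\nu(e_{xy})$ commute. The plan is to extract this directly from the computation already carried out in the proof of \cref{vf(e_xy)^2=nu(e_xy)}. There we wrote
\[
\vf(e_{xy})^2 = \0(e_{xy})^2 + \0(e_{xy})\circ\nu(e_{xy}) + \nu(e_{xy})^2,
\]
and concluded $\vf(e_{xy})^2 = \nu(e_{xy})$. Since $\nu(e_{xy})$ is diagonal, $\nu(e_{xy})^2 = \nu(e_{xy})$, and by \cref{e_uv-circ-d} the cross term $\0(e_{xy})\circ\nu(e_{xy})$ lies in $\{0,\0(e_{xy})\}$. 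Comparing with $\vf(e_{xy})^2 = \nu(e_{xy})$ and using that $\0(e_{xy})^2 = 0$, we get $\0(e_{xy})\circ\nu(e_{xy}) = 0$. Because $\ch(F)=2$, the vanishing of the Jordan product $\0(e_{xy})\circ\nu(e_{xy}) = \0(e_{xy})\nu(e_{xy}) + \nu(e_{xy})\0(e_{xy})$ is exactly the statement that the two elements commute, which is the assertion.

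The only genuinely new observation needed is that $\0(e_{xy})\circ\nu(e_{xy})$ is in fact $0$ and not $\0(e_{xy})$; I would spell this out by the following dichotomy. If $\0(e_{xy})\circ\nu(e_{xy}) = \0(e_{xy})$, then the displayed identity gives $\vf(e_{xy})^2 = 0 + \0(e_{xy}) + \nu(e_{xy}) = \vf(e_{xy})$ (using \cref{vf(e_xy)=0(e_xy)+nu(e_xy)}), so $\vf(e_{xy})$ would be an idempotent; but $\vf$ is strong and $e_{xy}\notin E(I(X,\Z_2))$, a contradiction. Hence $\0(e_{xy})\circ\nu(e_{xy}) = 0$, as required. (In fact this is precisely the case analysis already implicit in the proof of \cref{vf(e_xy)^2=nu(e_xy)}, where the conclusion $\vf(e_{xy})^2 = \nu(e_{xy})$ was reached by ruling out the value $\vf(e_{xy})$.)

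There is essentially no obstacle here: the corollary is a one-line repackaging of the preceding lemma, and the proof amounts to observing that the identity $\vf(e_{xy})^2 = \nu(e_{xy})$ forces the cross term in the expansion of $(\0(e_{xy})+\nu(e_{xy}))^2$ to vanish. I would write the proof as simply: "By the proof of \cref{vf(e_xy)^2=nu(e_xy)} we have $\0(e_{xy})\circ\nu(e_{xy}) = 0$, which in characteristic $2$ means $\0(e_{xy})$ and $\nu(e_{xy})$ commute."
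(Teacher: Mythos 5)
Your proposal is correct and follows essentially the same route as the paper, which treats the corollary as an immediate consequence of the computation in the proof of \cref{vf(e_xy)^2=nu(e_xy)}: comparing $\vf(e_{xy})^2=\0(e_{xy})\circ\nu(e_{xy})+\nu(e_{xy})$ with the conclusion $\vf(e_{xy})^2=\nu(e_{xy})$ forces $\0(e_{xy})\circ\nu(e_{xy})=0$, which in characteristic $2$ is exactly commutativity. Your explicit dichotomy (the cross term being $\0(e_{xy})$ would make $\vf(e_{xy})$ an idempotent, contradicting strongness) is precisely the case analysis already used to prove that lemma.
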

	
	\begin{lem}\label{0(e_xy)=e_xi(x)-circ-0(e_xy)}
		For all $x<y$ we have $\0(e_{xy})=e_{\xi(x)}\circ\0(e_{xy})=e_{\xi(y)}\circ\0(e_{xy})$.
	\end{lem}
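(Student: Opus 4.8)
The plan is to exploit that $e_x+e_{xy}$ and $e_y+e_{xy}$ are idempotents of $I(X,\Z_2)$: indeed (as already observed for the basis $\{e_x\}\sqcup\{e_x+e_{xy}\}$) one has $e_xe_{xy}+e_{xy}e_x=e_{xy}=e_ye_{xy}+e_{xy}e_y$ while $e_{xy}^2=0$, so squaring either of these two elements reproduces it. Consequently $\vf$ carries both of them to idempotents of $I(X,\Z_2)$.

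First I would apply $\vf$ to $e_x+e_{xy}$ and, writing $p=e_{\xi(x)}$, $q=\0(e_{xy})$ and $d=\nu(e_{xy})$, invoke \cref{vf(e_A)=e_xi(A),vf(e_xy)=0(e_xy)+nu(e_xy)} to see that $p+q+d$ is idempotent. Expanding $(p+q+d)^2$ into its nine pairwise products and substituting the facts already at hand — namely $q^2=0$ (since $\0(e_{xy})\in\B$ has the form $e_{uv}$ with $u<v$), $p^2=p$ and $d^2=d$ (every diagonal element of $I(X,\Z_2)$ is an idempotent, as its entries lie in $\{0,1\}$), $p\circ d=0$ (both are diagonal, hence commute, and $\ch(F)=2$), and $q\circ d=0$ by \cref{nu(e_xy)-circ-0(e_xy)-is-zero} — every term cancels except $p\circ q$, so the relation $(p+q+d)^2=p+q+d$ reduces to $p\circ q=q$; that is, $e_{\xi(x)}\circ\0(e_{xy})=\0(e_{xy})$.

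Then I would run the verbatim argument with $e_y+e_{xy}$ in place of $e_x+e_{xy}$ — here $e_ye_{xy}=0$, $e_{xy}e_y=e_{xy}$, and $\vf(e_y)=e_{\xi(y)}$ is again diagonal — to obtain $e_{\xi(y)}\circ\0(e_{xy})=\0(e_{xy})$, which together with the previous identity yields the claim.

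I do not expect any genuine obstacle here: the computation is a few lines long and each ingredient it uses has already been recorded. The one point requiring a little care is the bookkeeping of cross terms in characteristic $2$, where $ab+ba$ need not vanish and cannot be simplified unless $a$ and $b$ are known to commute; this is precisely the role played by \cref{nu(e_xy)-circ-0(e_xy)-is-zero} (and by the commutativity of diagonal elements of $I(X,\Z_2)$).
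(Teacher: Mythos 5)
Your proposal is correct and follows essentially the same route as the paper: apply $\vf$ to the idempotents $e_x+e_{xy}$ and $e_y+e_{xy}$, expand the square of the image, and cancel all terms except $e_{\xi(x)}\circ\0(e_{xy})$ using the previously established facts (the paper packages your computation $q^2+q\circ d+d^2=d$ as the single citation $\vf(e_{xy})^2=\nu(e_{xy})$ from \cref{vf(e_xy)^2=nu(e_xy)}, but the content is identical).
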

	\begin{proof}
		Since $e_x+e_{xy}\in E(I(X,\Z_2))$, we have
		\begin{align}\label{vf(e_xy)=vf(e_x)-circ-vf(e_xy)+vf(e_xy)^2}
		\vf(e_{xy})=\vf(e_x)\circ\vf(e_{xy})+\vf(e_{xy})^2.
		\end{align}
		By \cref{vf(e_A)=e_xi(A),vf(e_xy)=0(e_xy)+nu(e_xy),vf(e_xy)^2=nu(e_xy)} and the commutativity of $D(X,\Z_2)$, the right-hand side of \cref{vf(e_xy)=vf(e_x)-circ-vf(e_xy)+vf(e_xy)^2} equals $e_{\xi(x)}\circ\0(e_{xy})+\nu(e_{xy})$, whence $e_{\xi(x)}\circ\0(e_{xy})=\vf(e_{xy})-\nu(e_{xy})=\0(e_{xy})$. Similarly, $e_{\xi(y)}\circ\0(e_{xy})=\0(e_{xy})$ follows from the fact that $e_y+e_{xy}\in E(I(X,\Z_2))$.
	\end{proof}
	
	\begin{lem}\label{e_xi(z)-circ-0(e_xy)=0}
		For all $x<y$ and $z\not\in\{x,y\}$ we have $e_{\xi(z)}\circ\0(e_{xy})=0$.
	\end{lem}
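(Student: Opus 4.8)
The plan is to push a single well-chosen idempotent of $I(X,\Z_2)$ through $\vf$ and extract the claimed identity from the requirement that its image squares to itself.

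First I would note that for $z\notin\{x,y\}$ the element $e_x+e_z+e_{xy}$ is an idempotent of $I(X,\Z_2)$: the element $e_x+e_{xy}$ is idempotent (since $e_xe_{xy}=e_{xy}$, $e_{xy}e_x=0$ and $e_{xy}^2=0$), while $e_z$ is orthogonal to both $e_x$ and $e_{xy}$ because $z\ne x$ and $z\ne y$. Applying $\vf$ and using $\vf(e_x)=e_{\xi(x)}$, $\vf(e_z)=e_{\xi(z)}$ together with \cref{vf(e_xy)=0(e_xy)+nu(e_xy)}, it follows that $a+b$ is an idempotent of $I(X,\Z_2)$, where $a:=e_{\xi(x)}+e_{\xi(z)}+\nu(e_{xy})\in D(X,\Z_2)$ and $b:=\0(e_{xy})\in\B$.

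Next I would expand $(a+b)^2=a+b$. Since $a$ is diagonal, it is idempotent (every element of $D(X,\Z_2)$ is), so $a^2=a$; and $b\in\B$ gives $b^2=0$. Hence the idempotency condition reduces to $a\circ b=b$. Now computing $a\circ b$ term by term: $e_{\xi(x)}\circ\0(e_{xy})=\0(e_{xy})=b$ by \cref{0(e_xy)=e_xi(x)-circ-0(e_xy)}, while $\nu(e_{xy})\circ\0(e_{xy})=0$ since $\nu(e_{xy})$ and $\0(e_{xy})$ commute by \cref{nu(e_xy)-circ-0(e_xy)-is-zero} and commuting elements have vanishing Jordan product in characteristic $2$. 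Therefore $a\circ b=b+e_{\xi(z)}\circ\0(e_{xy})$, and comparing with $a\circ b=b$ forces $e_{\xi(z)}\circ\0(e_{xy})=0$, which is the assertion.

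I do not expect a real obstacle: the only points needing care are checking that $e_x+e_z+e_{xy}$ is genuinely idempotent (which rests on the orthogonality of $e_z$ with $e_x$ and with $e_{xy}$, ensured by $z\notin\{x,y\}$) and handling the characteristic-$2$ cancellations correctly. All the substance is already packaged in \cref{0(e_xy)=e_xi(x)-circ-0(e_xy),nu(e_xy)-circ-0(e_xy)-is-zero}, so the lemma amounts to a short verification.
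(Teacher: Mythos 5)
Your proof is correct and follows essentially the same route as the paper: both extract the identity from the idempotency of an element built from $e_x$, $e_z$ and $e_{xy}$. The paper's version is marginally leaner — it applies \cref{vf(e)vf(f)=vf(f)vf(e)} to the commuting idempotents $e_z$ and $e_x+e_{xy}$, so it never needs \cref{0(e_xy)=e_xi(x)-circ-0(e_xy)} — but your direct expansion of $\vf(e_x+e_z+e_{xy})^2$ is equally valid and uses only lemmas established before this one.
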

	\begin{proof}
		Since $e_z,e_x+e_{xy}\in E(I(X,\Z_2))$ are orthogonal, then by \cref{vf(e_xy)=0(e_xy)+nu(e_xy),vf(e)vf(f)=vf(f)vf(e)} we have $0=\vf(e_z)\circ(\vf(e_x)+\vf(e_{xy}))=\vf(e_z)\circ\vf(e_{xy})=e_{\xi(z)}\circ(\0(e_{xy})+\nu(e_{xy}))=e_{\xi(z)}\circ\0(e_{xy})$.
	\end{proof}
	
	\begin{lem}\label{vf(e_xy)-circ-vf(e_xz)=0}
		For all $x<y,z$ we have $\vf(e_{xy})\circ\vf(e_{xz})=0$. Similarly for $x,y<z$ we have $\vf(e_{xz})\circ\vf(e_{yz})=0$.
	\end{lem}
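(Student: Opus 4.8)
The plan is to extract $\vf(e_{xy})\circ\vf(e_{xz})=0$ from the idempotency of a single, carefully chosen element of $I(X,\Z_2)$, namely $e_x+e_{xy}+e_{xz}$ (assuming $y\ne z$; the case $y=z$ is trivial, since $a\circ a=2a^2=0$ in characteristic $2$). First I would check, directly from $e_{pq}e_{uv}=\dl(q,u)e_{pv}$, that $e_x+e_{xy}+e_{xz}$ squares to itself, so that applying the additive idempotent preserver $\vf$ yields an idempotent of $I(X,\Z_2)$. Writing $E=\vf(e_x)=e_{\xi(x)}$, $a=\vf(e_{xy})$ and $b=\vf(e_{xz})$, expanding $(E+a+b)^2=E+a+b$ over a ring of characteristic $2$ and cancelling $E=E^2$ leaves
\begin{align*}
E\circ a+a^2+E\circ b+b^2+a\circ b=a+b.
\end{align*}

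The second step is to recognize that $E\circ a+a^2=a$ and $E\circ b+b^2=b$ using the results already established. By \cref{vf(e_xy)=0(e_xy)+nu(e_xy)} we have $a=\0(e_{xy})+\nu(e_{xy})$ with $\nu(e_{xy})\in D(X,\Z_2)$; since $E,\nu(e_{xy})\in D(X,\Z_2)$ and $D(X,\Z_2)$ is commutative of characteristic $2$, one gets $E\circ\nu(e_{xy})=0$, while $E\circ\0(e_{xy})=\0(e_{xy})$ by \cref{0(e_xy)=e_xi(x)-circ-0(e_xy)}, whence $E\circ a=\0(e_{xy})$; combined with $a^2=\nu(e_{xy})$ from \cref{vf(e_xy)^2=nu(e_xy)} this gives $E\circ a+a^2=\0(e_{xy})+\nu(e_{xy})=a$, and symmetrically $E\circ b+b^2=b$. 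Substituting into the displayed identity collapses it to $a\circ b=0$, i.e. $\vf(e_{xy})\circ\vf(e_{xz})=0$.

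For the ``similarly'' statement I would run the identical argument with the idempotent $e_z+e_{xz}+e_{yz}$ (for $x<z$, $y<z$, $x\ne y$), setting $E'=\vf(e_z)=e_{\xi(z)}$, $a'=\vf(e_{xz})$, $b'=\vf(e_{yz})$; the relevant inputs are now the $e_{\xi(z)}$-halves of \cref{0(e_xy)=e_xi(x)-circ-0(e_xy)} (since $z$ is the larger index in both $e_{xz}$ and $e_{yz}$) together with \cref{vf(e_xy)^2=nu(e_xy)}, and the same cancellation yields $a'\circ b'=0$.

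I do not anticipate any genuine obstacle: the argument is a short computation. The only points requiring a bit of care are confirming that the ad hoc sums $e_x+e_{xy}+e_{xz}$ and $e_z+e_{xz}+e_{yz}$ are indeed idempotent once $y\ne z$ (resp. $x\ne y$), and invoking the correct half of \cref{0(e_xy)=e_xi(x)-circ-0(e_xy)} so that $E$ (resp. $E'$) is the diagonal idempotent indexed by the right endpoint of the arrow whose image absorbs it.
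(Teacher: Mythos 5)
Your proposal is correct and follows essentially the same route as the paper: both use the idempotency of $e_x+e_{xy}+e_{xz}$ (resp.\ $e_z+e_{xz}+e_{yz}$), expand the square of its image, and cancel using \cref{vf(e_A)=e_xi(A),vf(e_xy)=0(e_xy)+nu(e_xy),0(e_xy)=e_xi(x)-circ-0(e_xy),vf(e_xy)^2=nu(e_xy)} together with the commutativity of $D(X,\Z_2)$. Your explicit treatment of the degenerate case $y=z$ and the verification that the chosen element is indeed idempotent are minor additions the paper leaves implicit.
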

	\begin{proof}
		Since $e_x+e_{xy}+e_{xz}\in E(I(X,\Z_2))$, then
		\begin{align}\label{vf(e_xy)+vf(e_xz)=vf(e_xy)^2+vf(e_xz)^2+...}
		\vf(e_{xy})+\vf(e_{xz})&=\vf(e_{xy})^2+\vf(e_{xz})^2+\vf(e_x)\circ\vf(e_{xy})+\vf(e_x)\circ\vf(e_{xz})\notag\\
		&\quad+\vf(e_{xy})\circ\vf(e_{xz}).
		\end{align}
		By \cref{vf(e_A)=e_xi(A),vf(e_xy)=0(e_xy)+nu(e_xy),0(e_xy)=e_xi(x)-circ-0(e_xy),vf(e_xy)^2=nu(e_xy)} the right-hand side of \cref{vf(e_xy)+vf(e_xz)=vf(e_xy)^2+vf(e_xz)^2+...} equals $\nu(e_{xy})+\nu(e_{xz})+\0(e_{xy})+\0(e_{xz})+\vf(e_{xy})\circ\vf(e_{xz})=\vf(e_{xy})+\vf(e_{xz})+\vf(e_{xy})\circ\vf(e_{xz})$, whence $\vf(e_{xy})\circ\vf(e_{xz})=0$. Similarly, using $e_z+e_{xz}+e_{yz}\in E(I(X,\Z_2))$, we prove that $\vf(e_{xz})\circ\vf(e_{yz})=0$.
	\end{proof}
	
	\begin{cor}\label{0(e_xy)-circ-0(e_xz)-is-zero}
		For all $x<y,z$ we have $\0(e_{xy})\circ\0(e_{xz})=0$ and $\nu(e_{xy})\circ\0(e_{xz})=0$. Similarly for $x,y<z$ we have $\0(e_{yz})\circ\0(e_{xz})=0$ and $\nu(e_{xz})\circ\0(e_{yz})=0$.
	\end{cor}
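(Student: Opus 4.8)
The plan is to derive all four identities as formal consequences of \cref{vf(e_xy)-circ-vf(e_xz)=0} by substituting the decomposition \eqref{vf(e_xy)=0(e_xy)+nu(e_xy)}. For the first pair I would fix $x<y$ and $x<z$; if $y=z$ both assertions are immediate (use $\0(e_{xy})^2=0$ and \cref{nu(e_xy)-circ-0(e_xy)-is-zero}), so I may assume $y\ne z$, and then $\0(e_{xy})\ne\0(e_{xz})$ because $\0$ is a bijection by \cref{vf(e_xy)-in-inter-centralizers}. Expanding the vanishing expression from \cref{vf(e_xy)-circ-vf(e_xz)=0} gives
\begin{align*}
0=\vf(e_{xy})\circ\vf(e_{xz})&=\0(e_{xy})\circ\0(e_{xz})+\0(e_{xy})\circ\nu(e_{xz})\\
&\quad+\nu(e_{xy})\circ\0(e_{xz})+\nu(e_{xy})\circ\nu(e_{xz}).
\end{align*}

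Next I would dispose of three of the four summands. The term $\nu(e_{xy})\circ\nu(e_{xz})$ vanishes because $D(X,\Z_2)$ is commutative and $\ch(F)=2$. By \cref{e_uv-circ-d} we have $\0(e_{xy})\circ\nu(e_{xz})=a\,\0(e_{xy})$ and $\nu(e_{xy})\circ\0(e_{xz})=b\,\0(e_{xz})$ for some $a,b\in\Z_2$. It then remains to understand $\0(e_{xy})\circ\0(e_{xz})$: writing $\0(e_{xy})=e_{pq}$ and $\0(e_{xz})=e_{rs}$ with $p<q$, $r<s$ and $(p,q)\ne(r,s)$, one has $e_{pq}\circ e_{rs}=\dl(q,r)e_{ps}+\dl(s,p)e_{rq}$. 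A short check shows that $q=r$ and $s=p$ cannot both hold, that each nonzero term lies in $\B$, and that it can equal neither $e_{pq}$ nor $e_{rs}$; hence $\0(e_{xy})\circ\0(e_{xz})$ is a $\Z_2$-combination of basis vectors from $\B\setminus\{\0(e_{xy}),\0(e_{xz})\}$.

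Finally, comparing coefficients against the linearly independent set $\B$ in the displayed identity forces $a=b=0$ and $\0(e_{xy})\circ\0(e_{xz})=0$; in particular $\nu(e_{xy})\circ\0(e_{xz})=b\,\0(e_{xz})=0$, which gives both halves of the first assertion. The statements for $x,y<z$ I would prove by running the same argument on the second identity $\vf(e_{xz})\circ\vf(e_{yz})=0$ of \cref{vf(e_xy)-circ-vf(e_xz)=0}. The only step here that is not purely bookkeeping is the support computation for the product of two distinct standard basis matrices of $J(I(X,\Z_2))$, so that is where I would be most careful — but it reduces to the small case analysis on the Kronecker deltas sketched above, so I do not expect a genuine obstacle.
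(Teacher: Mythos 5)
Your proof is correct and follows essentially the same route as the paper: expand $\vf(e_{xy})\circ\vf(e_{xz})=0$ via $\vf=\0+\nu$, reduce the cross terms to scalar multiples of $\0(e_{xy})$ and $\0(e_{xz})$ using \cref{e_uv-circ-d}, observe that $\0(e_{xy})\circ\0(e_{xz})$ is supported on $\B\setminus\{\0(e_{xy}),\0(e_{xz})\}$, and conclude by linear independence of $\B$. Your explicit treatment of the degenerate case $y=z$ (via $\ch=2$ and \cref{nu(e_xy)-circ-0(e_xy)-is-zero}) is a small point of extra care that the paper leaves implicit.
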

	\begin{proof}
		By \cref{vf(e_xy)=0(e_xy)+nu(e_xy),vf(e_xy)-circ-vf(e_xz)=0} we obtain $\nu(e_{xy})\circ\0(e_{xz})+\0(e_{xy})\circ\nu(e_{xz})+\0(e_{xy})\circ\0(e_{xz})=0$. Write $\0(e_{xy})=e_{uv}$ and $\0(e_{xz})=e_{pq}$. Then $ae_{uv}+be_{pq}+e_{uv}\circ e_{pq}=0$ for some $a,b\in\Z_2$ by \cref{e_uv-circ-d}. Observe that $e_{uv}\circ e_{pq}\in\B\setminus\{e_{uv},e_{pq}\}$, because $u<v$ and $p<q$. Moreover, $e_{uv}\ne e_{pq}$, because $\0$ is a bijection. Hence, $a=b=0$ and $e_{uv}\circ e_{pq}=0$ due to linear independence of $\B$, proving $\0(e_{xy})\circ\0(e_{xz})=0$ and $\nu(e_{xy})\circ\0(e_{xz})=\0(e_{xy})\circ\nu(e_{xz})=0$. The proof of $\0(e_{yz})\circ\0(e_{xz})=0$ and $\nu(e_{yz})\circ\0(e_{xz})=\0(e_{yz})\circ\nu(e_{xz})=0$ is similar.
	\end{proof}
	
	\begin{lem}\label{0(e_xy)-circ-0(e_yz)=0(e_xz)}
		For all $x<y<z$ we have $\0(e_{xz})=\0(e_{xy})\circ\0(e_{yz})$ and $\nu(e_{xy})\circ\0(e_{yz})=\0(e_{xy})\circ\nu(e_{yz})=0$.
	\end{lem}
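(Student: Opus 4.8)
The plan is to follow the pattern of \cref{0(e_xy)=e_xi(x)-circ-0(e_xy),e_xi(z)-circ-0(e_xy)=0,vf(e_xy)-circ-vf(e_xz)=0}: feed a well-chosen idempotent to $\vf$, expand the resulting identity $\vf(\cdot)^2=\vf(\cdot)$, and isolate the new information after cancelling everything the earlier lemmas already control. The idempotent to use is the ``parallelogram'' $g=e_y+e_{xy}+e_{yz}+e_{xz}$ (the one appearing in Case~3.4 of the proof of \cref{Idemp-pres-is-Jordan-homo}); a direct entrywise check gives $g^2=g$. Hence $S:=\vf(g)=\vf(e_y)+\vf(e_{xy})+\vf(e_{yz})+\vf(e_{xz})$ satisfies $S^2=S$.

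The main computation is the expansion of $S^2$. Using $\vf(e_y)=e_{\xi(y)}$ (\cref{vf(e_A)=e_xi(A)}) and $\vf(e_{uv})=\0(e_{uv})+\nu(e_{uv})$ (\cref{vf(e_xy)=0(e_xy)+nu(e_xy)}) I would evaluate each square and each mixed Jordan product occurring in $S^2$: $\vf(e_y)^2=e_{\xi(y)}$ and $\vf(e_{uv})^2=\nu(e_{uv})$ by \cref{vf(e_xy)^2=nu(e_xy)}; $\vf(e_y)\circ\vf(e_{xy})=\0(e_{xy})$ and $\vf(e_y)\circ\vf(e_{yz})=\0(e_{yz})$ by \cref{0(e_xy)=e_xi(x)-circ-0(e_xy)} (the $\nu$-parts drop, $D(X,\Z_2)$ being commutative of characteristic $2$); $\vf(e_y)\circ\vf(e_{xz})=0$ by \cref{e_xi(z)-circ-0(e_xy)=0}, since $y\notin\{x,z\}$; and $\vf(e_{xy})\circ\vf(e_{xz})=\vf(e_{yz})\circ\vf(e_{xz})=0$ by \cref{vf(e_xy)-circ-vf(e_xz)=0}. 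The only Jordan product not yet pinned down is $\vf(e_{xy})\circ\vf(e_{yz})$, so substituting everything into $S^2=S$ and cancelling collapses the identity to precisely $\vf(e_{xy})\circ\vf(e_{yz})=\0(e_{xz})$.

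It then remains to unpack the left-hand side. Since $\nu(e_{xy})\circ\nu(e_{yz})=0$, one gets $\0(e_{xy})\circ\0(e_{yz})+\0(e_{xy})\circ\nu(e_{yz})+\nu(e_{xy})\circ\0(e_{yz})=\0(e_{xz})$; by \cref{e_uv-circ-d} the two mixed terms are $\Z_2$-multiples of $\0(e_{xy})$ and $\0(e_{yz})$ respectively, while a one-line check with the matrix-unit multiplication rule shows $\0(e_{xy})\circ\0(e_{yz})$ is either $0$ or a single element of $\B$ distinct from both $\0(e_{xy})$ and $\0(e_{yz})$. Now $\0$ is a bijection (\cref{vf(e_xy)-in-inter-centralizers}), so $\0(e_{xy}),\0(e_{yz}),\0(e_{xz})$ are three distinct members of the linearly independent set $\B$; comparing coordinates forces $\0(e_{xy})\circ\0(e_{yz})\ne 0$ (otherwise $\0(e_{xz})\in\Span\{\0(e_{xy}),\0(e_{yz})\}$), hence it is a basis element, and matching its coordinate forces it to equal $\0(e_{xz})$, after which the two mixed terms must vanish, i.e.\ $\0(e_{xy})\circ\nu(e_{yz})=\nu(e_{xy})\circ\0(e_{yz})=0$.

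There is no deep obstacle here; the content is in spotting the idempotent $g$. The points that need care are (i) checking that among all the Jordan products in $S^2$ exactly one, $\vf(e_{xy})\circ\vf(e_{yz})$, is genuinely new, so that $S^2=S$ isolates it, and (ii) the final coordinate comparison, where one must use both the linear independence of $\B$ and the injectivity of $\0$ to extract all three assertions simultaneously rather than just the product formula.
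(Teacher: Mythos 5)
Your proposal is correct and follows essentially the same route as the paper: the same idempotent $e_y+e_{xy}+e_{yz}+e_{xz}$, the same cancellation of all previously controlled Jordan products to isolate $\vf(e_{xy})\circ\vf(e_{yz})=\0(e_{xz})$, and the same final comparison using \cref{e_uv-circ-d}, the injectivity of $\0$ and the linear independence of $\B$. Your explicit remark that $\0(e_{xy})\circ\0(e_{yz})=0$ would force $\0(e_{xz})\in\gen{\0(e_{xy}),\0(e_{yz})}$ is a slightly more careful phrasing of the paper's argument, but the content is identical.
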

	\begin{proof}
		Since $e_y+e_{xy}+e_{yz}+e_{xz}\in E(I(X,\Z_2))$, then
		\begin{align}\label{vf(e_xy)+vf(e_yz)+vf(e_xz)=vf(e_xy)^2+vf(e_yz)^2+vf(e_xz)^2}
		\vf(e_{xy})+\vf(e_{yz})+\vf(e_{xz})&=\vf(e_{xy})^2+\vf(e_{yz})^2+\vf(e_{xz})^2\notag\\
		&\quad+\vf(e_y)\circ\vf(e_{xy})+\vf(e_y)\circ\vf(e_{yz})+\vf(e_y)\circ\vf(e_{xz})\notag\\
		&\quad+\vf(e_{xy})\circ\vf(e_{yz})+\vf(e_{xy})\circ\vf(e_{xz})+\vf(e_{yz})\circ\vf(e_{xz}).
		\end{align}
		Now, using \cref{vf(e_A)=e_xi(A),vf(e_xy)=0(e_xy)+nu(e_xy),0(e_xy)=e_xi(x)-circ-0(e_xy),vf(e_xy)^2=nu(e_xy),vf(e_xy)-circ-vf(e_xz)=0,e_xi(z)-circ-0(e_xy)=0}, we see that the right-hand side of \cref{vf(e_xy)+vf(e_yz)+vf(e_xz)=vf(e_xy)^2+vf(e_yz)^2+vf(e_xz)^2} is $\vf(e_{xy})+\vf(e_{yz})+\nu(e_{xz})+\vf(e_{xy})\circ\vf(e_{yz})$, whence $\0(e_{xz})=\vf(e_{xy})\circ\vf(e_{yz})$. The latter equals $\nu(e_{xy})\circ\0(e_{yz})+\0(e_{xy})\circ\nu(e_{yz})+\0(e_{xy})\circ\0(e_{yz})$. Let $\0(e_{xy})=e_{pq}$, $\0(e_{yz})=e_{rs}$ and $\0(e_{xz})=e_{tu}$. In view of \cref{e_uv-circ-d} we have
		\begin{align*}
		e_{tu}=ae_{pq}+be_{rs}+e_{pq}\circ e_{rs}
		\end{align*}
		for some $a,b\in\Z_2$. Observe that $e_{pq}\circ e_{rs}\in\B\setminus\{e_{pq},e_{rs}\}$, because $p<q$ and $r<s$. Moreover, $e_{tu}\in\B\setminus\{e_{pq},e_{rs}\}$ because $\0$ is a bijection. Therefore, $a=b=0$ and $e_{tu}=e_{pq}\circ e_{rs}$ by linear independence of $\B$, proving the desired equalities.
	\end{proof}
	
	\begin{lem}\label{vf(e_xy)-circ-vf(e_uv)-is-zero}
		For all $x<y$ and $u<v$ with $\{x,y\}\cap\{u,v\}=\emptyset$ we have $\vf(e_{xy})\circ\vf(e_{uv})=0$.
	\end{lem}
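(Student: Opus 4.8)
The plan is to manufacture an idempotent of $I(X,\Z_2)$ out of $e_{xy}$, $e_{uv}$ and two diagonal terms, apply $\vf$, and then extract the desired relation from the fact that the image is again an idempotent, after discarding the contributions already known to vanish.

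First I would observe that, since $\{x,y\}\cap\{u,v\}=\emptyset$, the four elements $x,y,u,v$ are pairwise distinct, the basis elements $e_{xy}$ and $e_{uv}$ are orthogonal, and each of $e_x+e_{xy}$ and $e_u+e_{uv}$ is an idempotent of $I(X,\Z_2)$. A short direct check then shows that
$$
f:=e_x+e_u+e_{xy}+e_{uv}=(e_x+e_{xy})+(e_u+e_{uv})
$$
is an idempotent: the cross products $(e_x+e_{xy})(e_u+e_{uv})$ and $(e_u+e_{uv})(e_x+e_{xy})$ both vanish because all subscripts involved are distinct. Hence $\vf(f)=P+Q\in E(I(X,\Z_2))$, where $P:=\vf(e_x+e_{xy})$ and $Q:=\vf(e_u+e_{uv})$ are themselves idempotents.

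Next I would expand $\vf(f)^2=P^2+Q^2+P\circ Q=P+Q+P\circ Q=\vf(f)+P\circ Q$, so that $\vf(f)^2=\vf(f)$ forces $P\circ Q=0$. By bilinearity of the Jordan product, $P\circ Q=\vf(e_x)\circ\vf(e_u)+\vf(e_x)\circ\vf(e_{uv})+\vf(e_{xy})\circ\vf(e_u)+\vf(e_{xy})\circ\vf(e_{uv})$, and I claim the first three summands are zero. Indeed $\vf(e_x)=e_{\xi(x)}$ and $\vf(e_u)=e_{\xi(u)}$ are diagonal, hence commute, hence have zero Jordan product since $\ch(\Z_2)=2$. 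Writing $\vf(e_{uv})=\0(e_{uv})+\nu(e_{uv})$ as in \cref{vf(e_xy)=0(e_xy)+nu(e_xy)}, the term $e_{\xi(x)}\circ\nu(e_{uv})$ vanishes for the same reason, while $e_{\xi(x)}\circ\0(e_{uv})=0$ by \cref{e_xi(z)-circ-0(e_xy)=0}, as $x\notin\{u,v\}$; thus $\vf(e_x)\circ\vf(e_{uv})=0$, and symmetrically $\vf(e_{xy})\circ\vf(e_u)=e_{\xi(u)}\circ\0(e_{xy})+e_{\xi(u)}\circ\nu(e_{xy})=0$ using $u\notin\{x,y\}$. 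Therefore $P\circ Q=\vf(e_{xy})\circ\vf(e_{uv})$, and combining this with $P\circ Q=0$ proves the lemma.

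There is no genuine obstacle here: the only real choice is that of the idempotent $f$, and once it is fixed the argument collapses to the vanishing statements \cref{e_xi(z)-circ-0(e_xy)=0} and the commutativity of $D(X,\Z_2)$, both already available. The identity $\vf(f)^2=\vf(f)+P\circ Q$ is the same ``expand the image of an idempotent'' device already used in the proofs of \cref{vf(e_xy)-circ-vf(e_xz)=0,0(e_xy)-circ-0(e_yz)=0(e_xz)}.
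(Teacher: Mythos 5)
Your proof is correct and follows essentially the same route as the paper: both start from the idempotent $e_x+e_u+e_{xy}+e_{uv}$ and reduce the claim to the vanishing statements of \cref{e_xi(z)-circ-0(e_xy)=0} together with the commutativity of $D(X,\Z_2)$. Your grouping of the idempotent as $(e_x+e_{xy})+(e_u+e_{uv})$, i.e.\ invoking that $\vf$ sends commuting idempotents to commuting idempotents, is a slightly tidier bookkeeping (it lets you bypass \cref{vf(e_xy)^2=nu(e_xy),0(e_xy)=e_xi(x)-circ-0(e_xy)}, which the paper's direct expansion uses), but the substance is identical.
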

	\begin{proof}
		The element $e_x+e_u+e_{xy}+e_{uv}$ is an idempotent, therefore,
		\begin{align}
		\vf(e_{xy})+\vf(e_{uv})&=\vf(e_{xy})^2+\vf(e_{uv})^2+\vf(e_x)\circ\vf(e_{xy})+\vf(e_x)\circ\vf(e_{uv})\notag\\
		&\quad+\vf(e_u)\circ\vf(e_{xy})+\vf(e_u)\circ\vf(e_{uv})+\vf(e_{xy})\circ\vf(e_{uv}).\label{(vf(e_x)+vf(e_u)+vf(e_xy)+vf(e_uv))^2}
		\end{align}
		By \cref{vf(e_xy)=0(e_xy)+nu(e_xy),vf(e_xy)^2=nu(e_xy),0(e_xy)=e_xi(x)-circ-0(e_xy),e_xi(z)-circ-0(e_xy)=0} the right-hand side of \cref{(vf(e_x)+vf(e_u)+vf(e_xy)+vf(e_uv))^2} equals $\vf(e_{xy})+\vf(e_{uv})+\vf(e_{xy})\circ\vf(e_{uv})$, whence $\vf(e_{xy})\circ\vf(e_{uv})=0$.
	\end{proof}
	
	\begin{cor}\label{0(e_xy)-circ-0(e_uv)-is-zero}
		For all $x<y$ and $u<v$ with $\{x,y\}\cap\{u,v\}=\emptyset$ we have $\0(e_{xy})\circ\0(e_{uv})=\nu(e_{xy})\circ\0(e_{uv})=\0(e_{xy})\circ\nu(e_{uv})=0$.
	\end{cor}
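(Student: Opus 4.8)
The plan is to derive all three identities at once from \cref{vf(e_xy)-circ-vf(e_uv)-is-zero}. First I would substitute the decomposition \cref{vf(e_xy)=0(e_xy)+nu(e_xy)} into $\vf(e_{xy})\circ\vf(e_{uv})=0$ and use bilinearity of the Jordan product to obtain
\begin{align*}
\0(e_{xy})\circ\0(e_{uv})+\0(e_{xy})\circ\nu(e_{uv})+\nu(e_{xy})\circ\0(e_{uv})+\nu(e_{xy})\circ\nu(e_{uv})=0.
\end{align*}
The last term vanishes because $\nu(e_{xy}),\nu(e_{uv})\in D(X,\Z_2)$, which is commutative, and $\ch(\Z_2)=2$, so $d_1\circ d_2=2d_1d_2=0$ for diagonal $d_1,d_2$. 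This leaves $\0(e_{xy})\circ\0(e_{uv})+\0(e_{xy})\circ\nu(e_{uv})+\nu(e_{xy})\circ\0(e_{uv})=0$.

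Next I would run the same linear-independence argument used in \cref{0(e_xy)-circ-0(e_xz)-is-zero,0(e_xy)-circ-0(e_yz)=0(e_xz)}. Writing $\0(e_{xy})=e_{pq}$ and $\0(e_{uv})=e_{rs}$ with $p<q$ and $r<s$, \cref{e_uv-circ-d} gives $\0(e_{xy})\circ\nu(e_{uv})=ae_{pq}$ and $\nu(e_{xy})\circ\0(e_{uv})=be_{rs}$ for some $a,b\in\Z_2$, so the previous identity reads $e_{pq}\circ e_{rs}+ae_{pq}+be_{rs}=0$. Since $p<q$ and $r<s$, the element $e_{pq}\circ e_{rs}=\dl(q,r)e_{ps}+\dl(s,p)e_{rq}$ is either $0$ or a basis vector from $\B$ distinct from both $e_{pq}$ and $e_{rs}$; and $e_{pq}\ne e_{rs}$ because $\0$ is a bijection while $e_{xy}\ne e_{uv}$ (as $\{x,y\}\cap\{u,v\}=\emptyset$). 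Hence linear independence of $\B$ forces $a=b=0$ and $e_{pq}\circ e_{rs}=0$, which are precisely the three claimed equalities.

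I do not expect a genuine obstacle here: the argument is a verbatim repetition of the scheme already applied in \cref{0(e_xy)-circ-0(e_xz)-is-zero}, the only routine point requiring attention being the verification that $e_{pq}\circ e_{rs}\notin\{e_{pq},e_{rs}\}$ whenever it is nonzero, which one checks by inspecting the two cases $q=r$ (yielding $e_{ps}$) and $s=p$ (yielding $e_{rq}$) and using $p<q$, $r<s$.
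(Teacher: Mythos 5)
Your proposal is correct and follows essentially the same route as the paper: expand $\vf(e_{xy})\circ\vf(e_{uv})=0$ via the decomposition \cref{vf(e_xy)=0(e_xy)+nu(e_xy)}, drop the $\nu\circ\nu$ term (commutativity of $D(X,\Z_2)$ in characteristic $2$), and conclude by the linear independence of $\B$ after observing that $e_{pq}\circ e_{rs}$ is either zero or a basis vector distinct from $e_{pq}$ and $e_{rs}$. Your explicit verification of that last point is slightly more careful than the paper's, but the argument is the same.
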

	\begin{proof}
		It follows from \cref{vf(e_xy)-circ-vf(e_uv)-is-zero} that $\nu(e_{xy})\circ\0(e_{uv})+\0(e_{xy})\circ\nu(e_{uv})+\0(e_{xy})\circ\0(e_{uv})=0$. Write $\0(e_{xy})=e_{pq}$ and $\0(e_{uv})=e_{rs}$. Then $ae_{pq}+be_{rs}+e_{pq}\circ e_{rs}=0$ for some $a,b\in\Z_2$. Since $e_{pq}\circ e_{rs}\in\B\setminus\{e_{pq},e_{rs}\}$ and $e_{pq}\ne e_{rs}$, we conclude that $a=b=0$ and $e_{pq}\circ e_{rs}=0$, as needed.
	\end{proof}
	
	\begin{prop}\label{0-preserves-circ}
		For all $x<y$ and $u<v$ we have 
		\begin{align}\label{0(e_xy-circ-e_uv)=0(e_xy)-circ-0(e_uv)}
		\0(e_{xy}\circ e_{uv})=\0(e_{xy})\circ\0(e_{uv}).
		\end{align}
	\end{prop}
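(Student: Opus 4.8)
The plan is to prove \eqref{0(e_xy-circ-e_uv)=0(e_xy)-circ-0(e_uv)} by a direct case analysis on the relative position of the pairs $(x,y)$ and $(u,v)$, reducing each case to one of the lemmas and corollaries already established. First I would record that, since $e_{xy}e_{uv}=\dl(y,u)e_{xv}$ and $e_{uv}e_{xy}=\dl(v,x)e_{uy}$, and since $x<y$ together with $u<v$ preclude $y=u$ and $v=x$ holding simultaneously, the Jordan product $e_{xy}\circ e_{uv}$ equals $e_{xv}$ when $y=u$, equals $e_{uy}$ when $v=x$, and equals $0$ in every remaining case; throughout I adopt the convention $\0(0)=0$, so that the left-hand side of \eqref{0(e_xy-circ-e_uv)=0(e_xy)-circ-0(e_uv)} always makes sense.

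The case $y=u$ is exactly \cref{0(e_xy)-circ-0(e_yz)=0(e_xz)} applied with the triple $(x,y,v)$, and the case $v=x$ is the same lemma applied with the triple $(u,x,y)$ together with the commutativity of $\circ$. In all other cases $e_{xy}\circ e_{uv}=0$, so I must show $\0(e_{xy})\circ\0(e_{uv})=0$. If $\{x,y\}\cap\{u,v\}=\emptyset$ this is \cref{0(e_xy)-circ-0(e_uv)-is-zero}; if $x=u$ this is \cref{0(e_xy)-circ-0(e_xz)-is-zero} with $z=v$ (in the degenerate subcase $e_{xy}=e_{uv}$ one argues directly that $\0(e_{xy})\in\B$ squares to $0$ since $\0(e_{xy})=e_{pq}$ with $p<q$ gives $e_{pq}^2=\dl(q,p)e_{pq}=0$); and if $y=v$ with $x\ne u$ this is the ``similarly'' part of \cref{0(e_xy)-circ-0(e_xz)-is-zero} combined with commutativity of $\circ$. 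Since the two relations $y=u$ and $v=x$ excluded from this last group are precisely the two nonzero cases already handled, and every pair $x<y$, $u<v$ satisfies exactly one of $y=u$, $v=x$, $x=u$, $y=v$, or $\{x,y\}\cap\{u,v\}=\emptyset$, this exhausts all configurations.

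The content is essentially bookkeeping: the only point requiring a little care is verifying that the short list of incidence relations above genuinely covers every possibility and that each lands in a previously proved statement, together with fixing the convention $\0(0)=0$ so that the zero cases are meaningful; there is no substantive obstacle. An alternative, slightly more self-contained route would be to expand $(e_x+e_u+e_{xy}+e_{uv})^2$, $(e_y+e_{xy}+e_{yv}+e_{xv})^2$, and the like, and read the identity off directly from $\vf$ being an idempotent preserver; but since \cref{0(e_xy)-circ-0(e_yz)=0(e_xz),0(e_xy)-circ-0(e_xz)-is-zero,0(e_xy)-circ-0(e_uv)-is-zero} already encode exactly those computations, citing them is cleaner.
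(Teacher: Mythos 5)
Your proof is correct and follows essentially the same route as the paper's: a case analysis on whether $e_{xy}\circ e_{uv}$ is nonzero (reducing to \cref{0(e_xy)-circ-0(e_yz)=0(e_xz)}) or zero (reducing to \cref{0(e_xy)-circ-0(e_xz)-is-zero} or \cref{0(e_xy)-circ-0(e_uv)-is-zero}). Your explicit handling of the degenerate subcase $(x,y)=(u,v)$ is a small point of extra care that the paper's version leaves implicit, but it does not change the argument.
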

	\begin{proof}
		If $e_{xy}\circ e_{uv}\ne 0$, then either $y=u$, in which case $e_{xy}\circ e_{uv}=e_{xv}$, or $x=v$, in which case $e_{xy}\circ e_{uv}=e_{uy}$. In both cases \cref{0(e_xy-circ-e_uv)=0(e_xy)-circ-0(e_uv)} follows from \cref{0(e_xy)-circ-0(e_yz)=0(e_xz)}. If $e_{xy}\circ e_{uv}=0$, then either $x=u$ or $y=v$, or $\{x,y\}\cap\{u,v\}=\emptyset$. In the first two cases we apply \cref{0(e_xy)-circ-0(e_xz)-is-zero}, and in the third case we apply \cref{0(e_xy)-circ-0(e_uv)-is-zero}. Thus, \cref{0(e_xy-circ-e_uv)=0(e_xy)-circ-0(e_uv)} holds for all $x<y$ and $u<v$.
	\end{proof}
	
	\begin{defn}
		A {\it shift map} is an additive map $\tau:I(X,\Z_2)\to I(X,\Z_2)$ such that $\tau(f)-f\in C(I(X,\Z_2))$ for all $f\in I(X,\Z_2)$. Clearly, $\tau$ is an idempotent preserver.
	\end{defn}
	
	\begin{prop}
		The additive map $\tau:I(X,\Z_2)\to I(X,\Z_2)$, such that 
		\begin{align}
		\tau|_{D(I(X,\Z_2))}&=\id_{D(I(X,\Z_2))},\label{tau|_D=identity}\\
		\tau(\0(e_{xy}))&=\0(e_{xy})+\nu(e_{xy}),\ x<y,\label{tau(e_xy)=e_xy+nu(e_xy)}
		\end{align}
		is a bijective shift map.
	\end{prop}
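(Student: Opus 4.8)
The plan is to establish, in order: that \cref{tau|_D=identity,tau(e_xy)=e_xy+nu(e_xy)} genuinely define an additive map; that this map is a shift map; and that it is bijective. For the first point, observe that by \cref{vf(e_xy)-in-inter-centralizers} the map $\0$ permutes $\B$, so $\{e_x\mid x\in X\}\sqcup\{\0(e_{xy})\mid x<y\}=\{e_x\mid x\in X\}\sqcup\B$ is precisely the standard basis of $I(X,\Z_2)$; hence \cref{tau|_D=identity,tau(e_xy)=e_xy+nu(e_xy)} prescribe $\tau$ on a basis and thereby determine a unique $\Z_2$-linear $\tau$.

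For the shift-map property I would set $N:=\tau-\id$. This $N$ is additive, vanishes on $D(X,\Z_2)$, and sends $\0(e_{xy})$ to $\nu(e_{xy})\in D(X,\Z_2)$ for $x<y$; consequently $N(I(X,\Z_2))\sst D(X,\Z_2)$ and each $\tau(f)-f=N(f)$ is a $\Z_2$-combination of the elements $\nu(e_{xy})$. Since $X$ is connected, $C(I(X,\Z_2))=\{0,\dl\}$, which is a $\Z_2$-subspace, so the whole assertion reduces to proving $\nu(e_{xy})\in C(I(X,\Z_2))$ for each $x<y$. In characteristic $2$ the Jordan product coincides with the Lie bracket, so $\nu(e_{xy})$ is central if and only if $\nu(e_{xy})\circ b=0$ for all $b\in I(X,\Z_2)$. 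Being diagonal, $\nu(e_{xy})$ already commutes with $D(X,\Z_2)$, hence $\nu(e_{xy})\circ D(X,\Z_2)=0$; and since $\B=\{\0(e_{uv})\mid u<v\}$ spans $J(I(X,\Z_2))$, it remains to check $\nu(e_{xy})\circ\0(e_{uv})=0$ for all $u<v$. This is the heart of the proof, and it is a case analysis fed entirely by the identities already in hand: for $(u,v)=(x,y)$ it is \cref{nu(e_xy)-circ-0(e_xy)-is-zero}; when $|\{u,v\}\cap\{x,y\}|=1$ one splits according to the common endpoint and invokes \cref{0(e_xy)-circ-0(e_xz)-is-zero} (whose two halves handle $u=x$ and $v=y$) and \cref{0(e_xy)-circ-0(e_yz)=0(e_xz)} (handling $u=y$, and also $v=x$ after using $a\circ b=b\circ a$); and when $\{u,v\}\cap\{x,y\}=\emptyset$ it is \cref{0(e_xy)-circ-0(e_uv)-is-zero}. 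Extending by linearity gives $\nu(e_{xy})\circ J(I(X,\Z_2))=0$, whence $\nu(e_{xy})\in C(I(X,\Z_2))$, and therefore $\tau(f)-f\in C(I(X,\Z_2))$ for every $f$.

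Bijectivity then comes for free: since $N(I(X,\Z_2))\sst D(X,\Z_2)$ and $N$ kills $D(X,\Z_2)$, we get $N^2=0$, so $\tau^2=(\id+N)^2=\id+2N+N^2=\id$ in characteristic $2$; thus $\tau$ is an involution and in particular a bijection. The only step that needs care is the vanishing $\nu(e_{xy})\circ\0(e_{uv})=0$ for all $u<v$, and even that is light once the right cases are assembled — the conceptual content being merely that a diagonal element annihilating $J(I(X,\Z_2))$ under $\circ$ is forced to be central, and that the center of $I(X,\Z_2)$ for connected $X$ reduces to $\{0,\dl\}$.
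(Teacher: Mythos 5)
Your proof is correct and follows essentially the same route as the paper's: reduce the shift-map property to the centrality of each $\nu(e_{xy})$, verify $\nu(e_{xy})\circ\0(e_{uv})=0$ by a case analysis on $|\{x,y\}\cap\{u,v\}|$ using the previously established corollaries, and get bijectivity from $\tau\circ\tau=\id$. If anything, your case analysis is slightly more complete than the paper's, since you explicitly handle the chained cases $u=y$ and $v=x$ via \cref{0(e_xy)-circ-0(e_yz)=0(e_xz)}, which the paper's own proof passes over in silence.
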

	\begin{proof}
		To show that $\tau$ is a shift map, it is enough to prove that $\nu(e_{xy})\in C(I(X,\Z_2))$ for all $x<y$. To this end, it suffices to make sure that $\nu(e_{xy})$ commutes with $\0(e_{uv})$ for all $u<v$, because $\0(\B)=\B$. Indeed, $\nu(e_{xy})$ commutes with $\0(e_{xy})$ by \cref{nu(e_xy)-circ-0(e_xy)-is-zero}, with $\0(e_{xv})$ and $\0(e_{uy})$ by \cref{0(e_xy)-circ-0(e_xz)-is-zero} and with $\0(e_{uv})$, such that $\{x,y\}\cap\{u,v\}=\emptyset$, by \cref{0(e_xy)-circ-0(e_uv)-is-zero}. The bijectivity of $\tau$ trivially follows from $\tau\circ\tau=\id$.
	\end{proof}
	
	\begin{cor}\label{vf-is-tau-psi}
		The map $\vf$ decomposes as $\tau\circ\psi$, where $\tau$ is given by \cref{tau|_D=identity,tau(e_xy)=e_xy+nu(e_xy)} and $\psi$ is a bijective additive idempotent preserver of $I(X,\Z_2)$.
	\end{cor}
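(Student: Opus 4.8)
The plan is to set $\psi:=\tau\circ\vf$ and verify the three properties required by the statement. Since the preceding proposition already establishes $\tau\circ\tau=\id_{I(X,\Z_2)}$ --- which is clear from \cref{tau|_D=identity,tau(e_xy)=e_xy+nu(e_xy)}, the fact that $\nu(e_{xy})\in D(X,\Z_2)$ is fixed by $\tau$, and $\ch(\Z_2)=2$, so that $\tau(\0(e_{xy})+\nu(e_{xy}))=\0(e_{xy})+\nu(e_{xy})+\nu(e_{xy})=\0(e_{xy})$, while $\0(\B)=\B$ guarantees that these relations cover a spanning set of $I(X,\Z_2)$ --- we have $\tau\m=\tau$. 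Hence $\vf=\tau\circ(\tau\circ\vf)=\tau\circ\psi$, which is the asserted decomposition.

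Next I would check that $\psi$ is a bijective additive idempotent preserver. Bijectivity and additivity are immediate, being stable under composition and valid for both $\tau$ and $\vf$. For the idempotent-preserving property, recall that a shift map preserves idempotents by definition; since $\vf$ preserves idempotents by hypothesis, $\psi(E(I(X,\Z_2)))=\tau(\vf(E(I(X,\Z_2))))\sst\tau(E(I(X,\Z_2)))\sst E(I(X,\Z_2))$.

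Although it is not demanded by the statement, I would also record the explicit action of $\psi$ on the standard basis, since this is precisely what makes the decomposition useful in the sequel: because $\tau$ fixes $D(X,\Z_2)$, the element $\psi(e_x)=\vf(e_x)$ remains a diagonal idempotent for every $x\in X$; and by \cref{vf(e_xy)=0(e_xy)+nu(e_xy)} together with $\tau(\0(e_{xy}))=\0(e_{xy})+\nu(e_{xy})$, $\tau(\nu(e_{xy}))=\nu(e_{xy})$ and $\ch(\Z_2)=2$ one gets $\psi(e_{xy})=\0(e_{xy})$ for all $x<y$. I do not anticipate any real obstacle here: the only substantive step, namely showing that each $\nu(e_{xy})$ is central so that $\tau$ genuinely is a shift map, was already carried out in the preceding proposition, and what remains is routine bookkeeping.
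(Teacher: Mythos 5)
Your proposal is correct and follows essentially the same route as the paper: define $\psi=\tau\circ\vf$, use $\tau\circ\tau=\id$ (which the paper also invokes) to get $\vf=\tau\circ\psi$, and observe that $\psi$ inherits bijectivity, additivity and idempotent preservation from its two factors. The extra bookkeeping you record, namely $\psi(e_{xy})=\0(e_{xy})$, is exactly the normalization the paper adopts immediately after the corollary.
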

	\begin{proof}
		Let us define $\psi=\tau\circ\vf$. Then $\psi$ is a bijective additive idempotent preserver of $I(X,\Z_2)$ as a composition of bijective additive idempotent preservers of $I(X,\Z_2)$. Moreover, since $\tau\circ\tau=\id$, we have $\vf=\tau\circ\psi$.
	\end{proof}
	
	In view of \cref{vf-is-tau-psi}, replacing $\vf$ by $\psi$ if necessary, we will assume that $\vf(e_{xy})=\0(e_{xy})$ for all $x<y$. 
	
	\begin{prop}\label{vf-Lie-auto}
		The map $\vf$ is a Lie automorphism of $I(X,\Z_2)$.
	\end{prop}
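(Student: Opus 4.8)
The plan is to verify that $\vf$ preserves the Jordan product $f\circ g=fg+gf$, which over $\Z_2$ coincides with the Lie product $[f,g]=fg-gf$; together with the bijectivity and additivity of $\vf$ established earlier, this is exactly the statement that $\vf$ is a Lie automorphism of $I(X,\Z_2)$. Since $\circ$ is bilinear over $\Z_2$ and $\vf$ is additive, it suffices to check $\vf(b\circ b')=\vf(b)\circ\vf(b')$ as $b,b'$ run over the standard basis $\{e_{xy}\mid x\le y\}$, and by symmetry of $\circ$ we may restrict to unordered pairs. At this stage we already know $\vf(e_x)=e_{\xi(x)}$ is diagonal for each $x\in X$ and $\vf(e_{xy})=\0(e_{xy})$ for each $x<y$, with $\0:\B\to\B$ a bijection, so the whole verification reduces to bookkeeping with the lemmas proved above.

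First I would handle two diagonal basis elements $e_x,e_u$: since $2=0$ in $\Z_2$ and $D(X,\Z_2)$ is commutative, $e_x\circ e_u=0$, and likewise $\vf(e_x)\circ\vf(e_u)=e_{\xi(x)}\circ e_{\xi(u)}=0$. Next, a diagonal $e_z$ against a non-diagonal $e_{xy}$ with $x<y$: by \cref{e_uv-circ-d}, $e_z\circ e_{xy}$ equals $e_{xy}$ if $z\in\{x,y\}$ and $0$ otherwise, so $\vf(e_z\circ e_{xy})$ is $\0(e_{xy})$ or $0$ accordingly; on the other side, $e_{\xi(z)}\circ\0(e_{xy})$ equals $\0(e_{xy})$ when $z\in\{x,y\}$ by \cref{0(e_xy)=e_xi(x)-circ-0(e_xy)} and $0$ when $z\notin\{x,y\}$ by \cref{e_xi(z)-circ-0(e_xy)=0}, so the two sides agree in every subcase. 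Finally, for two non-diagonal basis elements $e_{xy},e_{uv}$ ($x<y$, $u<v$), the identity $\vf(e_{xy}\circ e_{uv})=\0(e_{xy})\circ\0(e_{uv})=\vf(e_{xy})\circ\vf(e_{uv})$ is precisely \cref{0-preserves-circ}: when $e_{xy}\circ e_{uv}$ is $e_{xv}$ or $e_{uy}$ this reads off \cref{0(e_xy)-circ-0(e_yz)=0(e_xz)}, and when it is $0$ it reads off \cref{0(e_xy)-circ-0(e_xz)-is-zero,0(e_xy)-circ-0(e_uv)-is-zero}.

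Assembling the cases and extending by $\Z_2$-linearity gives $\vf(f\circ g)=\vf(f)\circ\vf(g)$ for all $f,g\in I(X,\Z_2)$, whence $\vf$ is a Lie automorphism. I do not anticipate any genuine obstacle: all of the substance was absorbed into the preceding lemmas, which were engineered so that this proposition collapses to a routine finite case check. The only point calling for a little care is to keep the convention $\0(0)=0$ in mind in the non-diagonal/non-diagonal case, so that the formula of \cref{0-preserves-circ} applies verbatim without first splitting on whether $e_{xy}\circ e_{uv}$ lies in $\B$.
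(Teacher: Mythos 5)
Your proposal is correct and follows essentially the same route as the paper: reduce to the standard basis, invoke \cref{0-preserves-circ} for the case of two elements of $\B$, \cref{0(e_xy)=e_xi(x)-circ-0(e_xy),e_xi(z)-circ-0(e_xy)=0} for the mixed case, and observe the diagonal--diagonal case is trivial (the paper cites \cref{vf(e)vf(f)=vf(f)vf(e)} there, while you compute directly that both sides vanish over $\Z_2$, which amounts to the same thing).
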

	\begin{proof}
		It is enough to prove that $\vf(e_{xy}\circ e_{uv})=\vf(e_{xy})\circ\vf(e_{uv})$ for all $x\le y$ and $u\le v$. If $x<y$ and $u<v$, then $e_{xy},e_{uv}\in\B$, so the result follows from \cref{0-preserves-circ}. If $x=y$ and $u<v$, then either $x\in\{u,v\}$, in which case $e_{xy}\circ e_{uv}=e_{uv}$, or $x\not\in\{u,v\}$, in which case $e_{xy}\circ e_{uv}=0$. Hence, the result follows from \cref{0(e_xy)=e_xi(x)-circ-0(e_xy),e_xi(z)-circ-0(e_xy)=0}. Finally, the case $x=y$ and $u=v$ is trivial by \cref{vf(e)vf(f)=vf(f)vf(e)} because $e_{x}$ and $e_{u}$ are commuting idempotents.
	\end{proof}
	
	\begin{thrm}\label{idemp-pres-of-I(X_Z_2)}
		Let $\vf:I(X,\Z_2)\to I(X,\Z_2)$ be a bijective additive map. Then $\vf$ is an idempotent preserver if and only if it is a composition of a bijective shift map and a Lie automorphism of $I(X,\Z_2)$.
	\end{thrm}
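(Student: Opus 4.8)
The plan is to derive this equivalence by assembling the structural results \cref{vf=eta-circ-psi,vf-is-tau-psi,vf-Lie-auto}; most of the substance is already in place, and what remains is bookkeeping.

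For the ``only if'' implication I would start from a bijective additive idempotent preserver $\vf$ and apply \cref{vf=eta-circ-psi} to write $\vf=\eta\circ\vf_1$, where $\eta$ is an inner automorphism of $I(X,\Z_2)$ and $\vf_1=\eta\m\circ\vf$ is again a bijective additive idempotent preserver, now sending each $e_x$ to a diagonal idempotent. Applying \cref{vf-is-tau-psi} to $\vf_1$ then gives $\vf_1=\tau\circ\vf_2$ with $\tau$ a bijective shift map and $\vf_2=\tau\circ\vf_1$ a bijective additive idempotent preserver; as $\tau$ fixes $D(X,\Z_2)$, the map $\vf_2$ still sends each $e_x$ to a diagonal idempotent, and in addition $\vf_2(e_{xy})=\0(e_{xy})$ for all $x<y$, so \cref{vf-Lie-auto} applies and $\vf_2$ is a Lie automorphism. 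Hence $\vf=\eta\circ\tau\circ\vf_2$, a composition of three maps, which I would collapse to the required form by commuting the inner automorphism past the shift: since $\eta$ is an automorphism, $\eta(C(I(X,\Z_2)))=C(I(X,\Z_2))$, so $\tau'=\eta\circ\tau\circ\eta\m$ is again a bijective shift map and $\eta\circ\tau=\tau'\circ\eta$. Then $\vf=\tau'\circ(\eta\circ\vf_2)$, and $\eta\circ\vf_2$, being a composition of Lie automorphisms, is a Lie automorphism.

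For the ``if'' implication I would take $\vf=\tau\circ\psi$ with $\tau$ a bijective shift map and $\psi$ a Lie automorphism and check that each factor preserves idempotents. A shift map does so, as noted when it was defined: if $f^2=f$ and $\tau(f)=f+z$ with $z$ central, then the cross terms cancel in characteristic $2$, so $\tau(f)^2=f^2+z^2=f+z^2$, and since $X$ is connected $z\in\{0,\dl\}$, whence $z^2=z$ and $\tau(f)^2=\tau(f)$. For $\psi$, I would invoke \cite[Theorem 4.15]{FKS} to write it as a composition of an inner automorphism --- an idempotent preserver --- and an elementary Lie automorphism $\tau_{\0,\sg,c}$, and then note that over $\Z_2$ the scalar $c_i=\tau_{\0,\sg,c}(e_{x_i})(x_1,x_1)$ lies in $\Z_2=\{0,1\}$ for every $i$; thus $\tau_{\0,\sg,c}(e_{x_i})\in E(I(X,\Z_2))$ by \cref{vf(e_x_i)-idemp<=>c_i=0-or-1}, and \cref{vf-idemp-pres<=>vf(e_x)-idemp} then shows that $\tau_{\0,\sg,c}$ is an idempotent preserver. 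Consequently $\psi$, and hence $\vf$, preserves idempotents.

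I do not expect a genuine obstacle here, since the real work is in the lemmas that feed into \cref{vf-Lie-auto}. The two points requiring a little attention are (i) checking at each normalization step that ``replacing $\vf$ by its reduced form'' keeps us inside the class of bijective additive idempotent preservers, so that the next lemma is applicable, and (ii) the identity $\eta\circ\tau=(\eta\circ\tau\circ\eta\m)\circ\eta$ together with $\eta(C(I(X,\Z_2)))=C(I(X,\Z_2))$, which lets the three-term composition $\eta\circ\tau\circ\vf_2$ be rewritten as a shift map followed by a Lie automorphism.
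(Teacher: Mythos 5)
Your proposal is correct and follows essentially the same route as the paper: the ``only if'' direction assembles \cref{vf=eta-circ-psi,vf-is-tau-psi,vf-Lie-auto} into the factorization $\vf=\eta\circ\tau\circ\vf_2$ and then commutes the inner automorphism past the shift map (your conjugation $\tau'=\eta\circ\tau\circ\eta\m$ is the same shift map the paper writes explicitly as $f\mapsto f+\nu(\eta\m(f))$), while the ``if'' direction reduces to \cref{vf-idemp-pres<=>vf(e_x)-idemp,vf(e_x_i)-idemp<=>c_i=0-or-1} via the elementary form $\tau_{\0,\sg,c}$ with $c_i\in\Z_2=\{0,1\}$. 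No gaps.
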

	\begin{proof}
		{\it The ``if'' part.} Since a shift map always preserves idempotents, it is enough to prove that a Lie automorphism preserves idempotents. Let $\vf$ be a Lie automorphism of $I(X,\Z_2)$. In view of \cite[Theorem 4.15]{FKS} we may assume that $\vf$ is elementary. Then $\vf=\tau_{\0,\sg,c}$ by \cite[Theorem 5.18]{FKS}. Since $c_i\in\Z_2=\{0,1\}$ for all $i$, it follows from \cref{vf-idemp-pres<=>vf(e_x)-idemp,vf(e_x_i)-idemp<=>c_i=0-or-1} that $\vf$ is an idempotent preserver.
		
		{\it The ``only if'' part.} Let $\vf$ be an idempotent preserver. In view of \cref{vf=eta-circ-psi,vf-is-tau-psi,vf-Lie-auto} there is a decomposition $\vf=\eta\circ\tau\circ\psi$, where $\eta$ is an inner automorphism, $\tau$ is a shift map and $\psi$ is a Lie automorphism. Set $\nu(f)=\tau(f)-f$. Then $\eta\circ\tau=\tau'\circ\eta$, where $\tau'(f)=f+\nu(\eta\m(f))$ for all $f\in I(X,\Z_2)$. Clearly, $\tau'$ is a shift map because $\nu(\eta\m(f))\in C(I(X,\Z_2))$. Thus, $\vf=\tau'\circ\psi'$, where $\psi'=\eta\circ\psi$ is a Lie automorphism of $I(X,\Z_2)$.
	\end{proof}

	\section{Tripotent preservers of $I(X,F)$}\label{sec-trip-pres}

	\begin{lem}\label{l maps orth trip}
		Let $\vf:R\to S$ be an additive tripotent preserver, where $R$ and $S$ are rings and $S$ is $n$-torsion-free with $n\in\{2,3\}$. Then $\vf$ maps orthogonal tripotents to orthogonal tripotents.
	\end{lem}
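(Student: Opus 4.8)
The plan is a direct computation with the cube identity, in the spirit of how orthogonal idempotents were handled earlier in the paper. Fix orthogonal tripotents $a,b\in T(R)$, so that $a^3=a$, $b^3=b$ and $ab=ba=0$, and put $p=\vf(a)$, $q=\vf(b)$; since $\vf$ preserves tripotents, $p^3=p$ and $q^3=q$. The first step is the observation that $a\pm b$ are again tripotents: $-b$ is a tripotent orthogonal to $a$, and for any orthogonal pair $c,d$ every mixed term of $(c+d)^3$ vanishes, so $(a\pm b)^3=a^3\pm b^3=a\pm b$. Applying the additive tripotent preserver $\vf$ then shows that $p+q$ and $p-q$ are tripotents of $S$.

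Next I would expand. With $u=p^2q+pqp+qp^2$ and $v=pq^2+qpq+q^2p$ one has $(p\pm q)^3=p^3\pm q^3\pm u+v$, so the identities $(p+q)^3=p+q$ and $(p-q)^3=p-q$, together with $p^3=p$ and $q^3=q$, give $u+v=0$ and $-u+v=0$. Adding these and using that $S$ is $2$-torsion-free yields
\[
pq^2+qpq+q^2p=0.
\]
The key manipulation is then to multiply this relation by $q$ on the left and, separately, on the right; using $q^3=q$ this produces $qpq^2+q^2pq+qp=0$ and $pq+qpq^2+q^2pq=0$, and subtracting the two equations gives $pq=qp$.

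Once $p$ and $q$ commute, the displayed relation collapses to $3pq^2=0$; multiplying by $q$ on the right and using $q^3=q$ again gives $3pq=0$, so $pq=0$ because $S$ is $3$-torsion-free, and hence $qp=pq=0$ as well. Since $p=\vf(a)$ and $q=\vf(b)$ are tripotents, this exhibits $\vf(a)$ and $\vf(b)$ as orthogonal tripotents. The main obstacle is essentially clerical: carrying out the cube expansion without sign errors and spotting the ``multiply by $q$ on both sides and subtract'' trick that reduces the cubic relation to the commutation $pq=qp$; after that the two torsion-freeness hypotheses are each used once and the argument closes immediately.
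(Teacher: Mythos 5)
Your argument is correct and follows essentially the same route as the paper's: apply $\vf$ to the tripotents $a\pm b$, expand the cubes, use $2$-torsion-freeness to isolate one homogeneous component of the mixed terms, deduce $pq=qp$ by multiplying that relation on the left and right by a tripotent and subtracting, and finish with $3$-torsion-freeness. The only (immaterial) difference is that you isolate the component $pq^2+qpq+q^2p$ and multiply by $q$, whereas the paper isolates $p^2q+qp^2+pqp$ and multiplies by $p$.
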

	\begin{proof}
		Let $a,b\in R$ be orthogonal tripotents. Then $a+b$ and $a-b$ are tripotents. It follows from $\vf(a+b)^3=\vf(a+b)$ that
		\begin{align}\label{vf(a+b)^3=vf(a+b)}
		0&=\vf(a)\vf(b)^2+\vf(b)^2\vf(a)+\vf(a)^2\vf(b)+\vf(b)\vf(a)^2\notag\\
		&\quad+\vf(a)\vf(b)\vf(a)+\vf(b)\vf(a)\vf(b).
		\end{align}
		Similarly, $\vf(a-b)^3=\vf(a-b)$ implies
		\begin{align}\label{vf(a-b)^3=vf(a-b)}
		0&=\vf(a)\vf(b)^2+\vf(b)^2\vf(a)-\vf(a)^2\vf(b)-\vf(b)\vf(a)^2\notag\\
		&\quad-\vf(a)\vf(b)\vf(a)+\vf(b)\vf(a)\vf(b).
		\end{align}
		Since $S$ is $2$-torsion-free, equalities \cref{vf(a+b)^3=vf(a+b),vf(a-b)^3=vf(a-b)} yield
		\begin{align}\label{vf(a)^2vf(b)+vf(b)vf(a)^2+vf(a)vf(b)vf(a)=0}
		\vf(a)^2\vf(b)+\vf(b)\vf(a)^2+\vf(a)\vf(b)\vf(a)=0.
		\end{align}
		It follows from \cref{vf(a)^2vf(b)+vf(b)vf(a)^2+vf(a)vf(b)vf(a)=0} that 
		\begin{align*}
		\vf(a)\vf(b)-\vf(b)\vf(a)&=\vf(a)^3\vf(b)-\vf(b)\vf(a)^3\\
		&=\vf(a)(\vf(a)^2\vf(b)+\vf(b)\vf(a)^2+\vf(a)\vf(b)\vf(a))\\
		&\quad-(\vf(a)^2\vf(b)+\vf(b)\vf(a)^2+\vf(a)\vf(b)\vf(a))\vf(a)=0.
		\end{align*}
		Therefore, \cref{vf(a)^2vf(b)+vf(b)vf(a)^2+vf(a)vf(b)vf(a)=0} becomes $3\vf(a)^2\vf(b)=0$, whence $\vf(a)^2\vf(b)=0$ because $S$ is $3$-torsion-free. Thus, $\vf(a)\vf(b)=\vf(a)^3\vf(b)=\vf(a)\cdot\vf(a)^2\vf(b)=0$. Obviously, $\vf(b)\vf(a)=0$ as well, because $\vf(a)$ and $\vf(b)$ commute. 
	\end{proof}
	
	
	The following definition is inspired by  the Peirce spaces associated to a tripotent in a Jordan triple (see for instance \cite[Definition 1.2.37]{Chu_Book}). These play an important role in \cite{BFPGMP08}.
	
	\begin{defn}
		Let $A$ be an associative algebra and $u\in A$ a $k$-potent with $k\ge 3$. Then $u^{k-1}\in E(A)$, so $u^{k-1}Au^{k-1}$ is a unital associative algebra under the same multiplication. We define the new associative product $\bullet_u$ on $u^{k-1}Au^{k-1}$ as follows $a\bullet_u b=au^{k-2}b$. Clearly $u$ is the identity element with respect to $\bullet_u$.
	\end{defn}

	\begin{prop} \label{p trip pres is triple homo}
		Let $X$ be finite and $R$ be $n$-torsion-free with $n\in\{2,3\}$. Then any $R$-linear tripotent preserver $\varphi:I(X,R)\to B$, where $B$ is an associative algebra, is a Jordan triple homomorphism.
	\end{prop}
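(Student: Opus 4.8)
\emph{The target} is the Jordan triple identity \eqref{vf(aba)}, i.e.\ $\vf(aba)=\vf(a)\vf(b)\vf(a)$ for all $a,b\in I(X,R)$. Since $\vf$ is $R$-linear and the two sides are linear in $b$ and quadratic in $a$, the map $a\mapsto \vf(aba)-\vf(a)\vf(b)\vf(a)$ (with $b$ fixed) is a ``quadratic map'', so it vanishes identically as soon as it vanishes on the standard basis $\{e_{xy}\mid x\le y\}$ and its associated symmetric bilinear map vanishes on pairs of distinct basis elements. Hence it suffices to check, for all standard basis elements $a,a',b$, the two identities $\vf(aba)=\vf(a)\vf(b)\vf(a)$ and $\vf(aba'+a'ba)=\vf(a)\vf(b)\vf(a')+\vf(a')\vf(b)\vf(a)$ (equivalently, to verify the fully polarized identity \eqref{vf(abc+cba)} on standard basis triples). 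This is the tripotent counterpart of the strategy of \cref{Idemp-pres-is-Jordan-homo,Idemp-pres-over-field-with>2-elem}.

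\emph{The engine} is that $I(X,R)$ is rich in tripotents. Besides the idempotents $e_A$ ($A\sst X$), a direct computation shows that $e_x+re_{xy}$ and $e_y+re_{xy}$ are idempotents for every $r\in R$, that $e_x-e_y$ is a tripotent for $x\ne y$, and---the decisive fact---that $-e_x+re_{xy}$, $e_x-e_y+re_{xy}$ and $-e_x+e_y+re_{xy}$ are tripotents for every $r\in R$ (their squares are $e_x-re_{xy}$, $e_x+e_y$, $e_x+e_y$ respectively); moreover $e_x+e_{xy}$ and $e_y-e_{xy}$ are orthogonal idempotents. For the basis elements occurring in a given case one picks such a tripotent $t$ depending on a parameter $r$, writes out $\vf(t)^3=\vf(t)$, and equates the coefficients of $r^0,r^1,r^2,r^3$: this is legitimate because a nonzero $2$- and $3$-torsion-free ring has characteristic $0$ or a characteristic coprime to $6$, hence at least $5$ elements, so a cubic in $r$ is determined by its values (say at $0,1,-1,2$). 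In parallel one applies \cref{l maps orth trip} to orthogonal tripotents to annihilate products of their images. This yields, writing $p_x=\vf(e_x)$ and $q=q_{xy}=\vf(e_{xy})$, the basic relations $q^3=0$, the Peirce-type identities $p_x^2q+p_xqp_x+qp_x^2=q$ and $p_xq^2+qp_xq+q^2p_x=0$ (and the same with $y$), and, from the orthogonality of $\vf(e_x+e_{xy})=p_x+q$ and $\vf(e_y-e_{xy})=p_y-q$, that $p_xq-qp_y=-q^2$ and $p_yq-qp_x=q^2$; together with the three-index versions for $x<y<z$ (built from tripotents involving $e_y$, $e_{xy}$, $e_{yz}$, $e_{xz}$) these combine to give the required identity in each configuration.

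\emph{The main obstacle}, and the real difference from \cref{Idemp-pres-is-Jordan-homo}, is that $p_x=\vf(e_x)$ is a priori only a tripotent and not an idempotent, so $p_x^2$ cannot be collapsed to $p_x$ and every relation stays cubic; separating out the wanted products therefore forces one to combine several tripotents in each case instead of a single idempotent. The heaviest cases are the ``triangle'' configurations $x<y<z$ with $e_{xy},e_{yz},e_{xz}$ simultaneously present, together with the fully polarized ones---the analogues of Cases 3.4--3.6 in the proof of \cref{Idemp-pres-is-Jordan-homo}, now with three free slots---where one has to run through the relative order of the (at most six) points of $X$ involved. Once every case is dealt with, \eqref{vf(aba)} holds on the standard basis, hence on all of $I(X,R)$ by the quadratic-map reduction above, so $\vf$ is a Jordan triple homomorphism.
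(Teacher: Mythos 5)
Your strategy---polarize the triple identity, verify it on the standard basis, and harvest relations by cubing the images of parametrized tripotents---is coherent in outline, and the ingredients you list check out: the elements $-e_x+re_{xy}$, $e_x-e_y+re_{xy}$, etc.\ are tripotents, the orthogonality of $e_x+e_{xy}$ and $e_y-e_{xy}$ together with \cref{l maps orth trip} does yield $p_xq-qp_y=-q^2$ and $p_yq-qp_x=q^2$ (in your notation $p_x=\vf(e_x)$, $q=\vf(e_{xy})$), and the interpolation at $r=0,1,-1,2$ is legitimate in a $2$- and $3$-torsion-free ring since the Vandermonde determinant is $\pm 12$. The genuine gap is that the proof stops exactly where the work begins: the relations you record do not visibly imply the triple identity, and you never carry out the combinations. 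Already the simplest nondiagonal target, $\vf(e_xe_{xy}e_x)=0$, i.e.\ $p_xqp_x=0$, needs a nontrivial chain: multiplying your commutator relations by $p_x$ and by $q$ gives $p_xqp_x=-q^2p_x=-p_xq^2$, $qp_xq=q^2p_y$ and $qp_yq=q^2p_x$, which substituted into the two Peirce identities give $2q^2p_x+q^2p_y=0$ and $2q^2p_y+q^2p_x=0$, hence $3q^2p_x=0$, and only then $p_xqp_x=0$ by $3$-torsion-freeness. For the three-point configurations $x<y<z$ and the fully polarized cases with up to six points you offer only the assertion that the relations ``combine to give the required identity''; since the whole content of the proposition lies in those verifications, and since the central obstacle you yourself identify (that $\vf(e_x)$ is merely a tripotent) is never actually overcome, this is a plan rather than a proof.

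The paper sidesteps the entire case analysis with one idea absent from your proposal. Set $u=\vf(\dl)$. Applying \cref{l maps orth trip} to the orthogonal idempotents $e$ and $\dl-e$ gives $\vf(e)u\vf(e)=\vf(e)$ and $u^2\vf(e)=\vf(e)=\vf(e)u^2$ for every $e\in E(I(X,R))$; applying these to $e_x$ and $e_x+e_{xy}$ extends them to all of $\vf(I(X,R))$. Hence $p=u^2$ is an idempotent commuting with the image, $\vf(I(X,R))\sst pBp$, and $u\vf(f)u=\vf(f)$ for all $f$. On $pBp$ one introduces the associative product $a\bullet_u b=aub$, for which $u$ is the identity and $\vf(e)\bullet_u\vf(e)=\vf(e)u\vf(e)=\vf(e)$. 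Thus $\vf:I(X,R)\to(pBp,\bullet_u)$ is an idempotent preserver, hence preserves the Jordan product by \cref{Idemp-pres-is-Jordan-homo}, hence (as $R$ is $2$-torsion-free) is a Jordan homomorphism and in particular a Jordan triple homomorphism for $\bullet_u$; finally $\vf(f)\vf(g)\vf(f)=\vf(f)u\vf(g)u\vf(f)=\vf(f)\bullet_u\vf(g)\bullet_u\vf(f)=\vf(fgf)$ recovers the triple identity for the original product. If you wish to keep your direct route you must actually execute the combinations for every configuration of basis elements; otherwise the change of product is the efficient argument.
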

	
	\begin{proof} 
		Let $e\in I(X,R)$ be an idempotent. Then $e$ and $\dl-e$ are orthogonal idempotents. By \cref{l maps orth trip}, $\varphi(e)$ and $\varphi(\dl-e)=\varphi(\dl)-\varphi(e)$ are orthogonal tripotents. The following equalities easily follow 
		\begin{align}
		\varphi(\dl)\varphi(e)&=\varphi(e)^2=\varphi(e)\varphi(\dl),\notag\\
		\varphi(e)^2\varphi(\dl)&=\varphi(e)\varphi(\dl)\varphi(e)=\varphi(\dl)\varphi(e)\varphi(\dl)=\varphi(\dl)\varphi(e)^2=\varphi(e),\label{eq prop trip pres 2}\\
		\varphi(\dl)^2\varphi(e)&=\varphi(e)=\varphi(e)\varphi(\dl)^2.\label{eq prop trip pres 3}
		\end{align}

		
		Let us write $u=\vf(\dl)$ and $p=\vf(\dl)^2.$ Observe that $u$ is a tripotent, and hence $p$ is an idempotent. Equality \cref{eq prop trip pres 3} shows that $p$ commutes with $\varphi(e_x)$ for all $x\in X$. We shall show that $p$ also commutes with $\varphi(e_{xy})$ for all $x<y$. Applying \cref{eq prop trip pres 3} to the idempotent $e_x+ e_{xy}$, we have
		$$
		p\varphi(e_x)+p\varphi(e_{xy})=\varphi(e_x)+\varphi(e_{xy})= \varphi(e_x)p+\varphi(e_{xy})p.
		$$
		The above equalities together with \cref{eq prop trip pres 3} in which $e=e_x$ yield
		$$ 
		p\varphi(e_{xy})=\varphi(e_{xy})=\varphi(e_{xy})p,
		$$ thus proving the claim. Since $\{e_{xy}\mid x\leq y\}$ is a basis of $I(X,R)$, we have
		$$ p \varphi(f)=\varphi(f)=\varphi(f)p$$
		for all $f\in I(X,R)$. It follows that
		\begin{align*}
		\varphi(I(X,R))\subseteq pBp.
		\end{align*}
		
		Using $u\varphi(e)u=\varphi(e)$, which is a part of \cref{eq prop trip pres 2}, with $e=e_x$ and $e=e_x+e_{xy}$, we conclude that $u\varphi(e_{xy})u=\varphi(e_{xy})$. Thus,
		\begin{align}\label{vf(dl)vf(f)vf(dl)=vf(a)}
		u\varphi(f)u=\varphi(f)    
		\end{align}
		holds for every $f\in I(X,R).$
		
		Now,  $pBp$ endowed with the product $a\bullet_u b$ is an associative algebra with identity element $u$. Let $e$ be an idempotent in $I(X,R)$. Then $\varphi(e)\bullet_u \varphi(e)=\varphi(e)$ by \cref{eq prop trip pres 2}, so $\vf$, as a linear map 
		$I(X,R)\to (pBp,\bullet_u)$, preserves idempotents. It follows by \cref{Idemp-pres-is-Jordan-homo} that $\varphi:I(X,R)\to (pBp,\bullet_u)$ is a Jordan homomorphism. 
		
		We shall prove now that $\varphi:I(X,R)\to B$ is in fact a Jordan triple homomorphism.  Take $f,g\in I(X,R)$. Since $\vf(g)=u\varphi(g)u$ by \cref{vf(dl)vf(f)vf(dl)=vf(a)}, we have
		\begin{align*}
		\varphi(f)\varphi(g)\varphi(f)= \varphi(f)u\varphi(g)u\varphi(f)=\varphi(f)\bullet_u \varphi(g)\bullet_u \varphi(f).
		\end{align*}
		But $\varphi:I(X,R)\to (pBp,\bullet_u)$ is a Jordan homomorphism, so it is also a Jordan triple homomorphism $I(X,R)\to (pBp,\bullet_u)$. Therefore,
		\begin{align*}
		\varphi(f)\bullet_u \varphi(g)\bullet_u \varphi(f)=\varphi(fgf),   
		\end{align*}
		which completes the proof. 
	\end{proof}
	
	
	
	
	\begin{lem} \label{l T(1) unitary}
		Let $\varphi: A\to B$ be a Jordan triple isomorphism between unital associative $R$-algebras. Then $u=\vf(1)$ is a central (invertible) element of $B$ whose square is $1$ and $\vf:A \to (B,\bullet_u)$ preserves the Jordan product. 
	\end{lem}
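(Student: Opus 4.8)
The plan is to obtain every property claimed about $u=\vf(1)$ by specializing the Jordan triple identity \cref{vf(aba)}, which holds because $\vf$ is a Jordan triple homomorphism, together with its linearization \cref{vf(abc+cba)} (the derivation of the latter from \cref{vf(aba)} given in the preliminaries applies verbatim to Jordan triple homomorphisms). Surjectivity of $\vf$ will be used in a crucial way to promote identities valid on the image to identities valid on all of $B$.

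First I would put $a=b=1$ in \cref{vf(aba)}, which gives $u=u^3$; thus $u$ is a tripotent, and the product $\bullet_u$ (namely $a\bullet_u b=aub$) is a legitimate associative product on $u^2Bu^2$. Next I would put $a=1$ in \cref{vf(aba)} to get $u\vf(b)u=\vf(b)$ for every $b\in A$; since $\vf$ is onto, this says $uyu=y$ for all $y\in B$. In particular $u^2yu^2=u(uyu)u=y$ for all $y$, so $u^2Bu^2=B$, which confirms that $(B,\bullet_u)$ is simply $B$ equipped with the twisted multiplication.

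Now I would read off the remaining structure of $u$. Taking $y=1$ in $uyu=y$ yields $u^2=1$, so $u$ is invertible with $u\m=u$. Combining $uyu=y$ with $u^2=1$ gives, for all $y\in B$, $uy=uyu^2=(uyu)u=yu$, so $u\in C(B)$. Finally, to see that $\vf$ preserves the Jordan product as a map $A\to(B,\bullet_u)$, I would set $b=1$ in the linearized identity \cref{vf(abc+cba)}: for all $a,c\in A$,
\[
\vf(ac+ca)=\vf(a)u\vf(c)+\vf(c)u\vf(a)=\vf(a)\bullet_u\vf(c)+\vf(c)\bullet_u\vf(a),
\]
which is exactly the asserted identity, since the Jordan product in $(B,\bullet_u)$ of $x$ and $y$ is $x\bullet_u y+y\bullet_u x$.

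I do not expect a genuine obstacle in this argument; the only point needing care is the order of the steps — one must first establish that $u$ is a tripotent and that $uyu=y$ holds on all of $B$ (this is where surjectivity enters) before the definition of $\bullet_u$, the equality $u^2=1$ and the centrality of $u$ can be legitimately deduced.
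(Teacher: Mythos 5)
Your proposal is correct and follows essentially the same route as the paper's proof: both obtain $u^3=u$ and $u\vf(b)u=\vf(b)$ from \cref{vf(aba)}, use surjectivity to reach $1_B$ (you specialize $uyu=y$ at $y=1$, the paper picks a preimage of $1$) to get $u^2=1$, deduce centrality by multiplying by $u$, and read off the $\bullet_u$-Jordan identity from \cref{vf(abc+cba)} with $b=1$. No gaps.
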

	
	\begin{proof}
		We first show that  $u^2=1$. Obviously, $u$ is a tripotent because Jordan triple homomorphisms preserve tripotents. Choose $a\in A$ such that $\vf(a)=1$. By \cref{vf(aba)} we have
		\begin{align}\label{eq unitary 1}
		u\vf(a)u=\vf(1 \cdot a \cdot 1)=\vf(a)=1. 
		\end{align}
		Then multiplying $u^3=u$ by $u\vf(a)$ on the left and using \cref{eq unitary 1}, we obtain $u^2=1$, as desired. 
		
		Now let $b\in A.$ Using \cref{vf(aba)} we have $u\vf(b)u=\vf(b).$ We now multiply this by $u$ on the left to obtain $\vf(b)u=u\vf(b)$, that is, $u\in C(B)$, thus proving the claim.
		
		It remains to prove that $\vf:A\to (B,\bullet_u)$ preserves the Jordan product. Observe that this makes sense, because $u^2=1$, so $u^2Bu^2=B$. Since $a\circ b=a\cdot 1\cdot b+b\cdot 1\cdot a$, using \cref{vf(abc+cba)} we have 
		$$ 
		\vf(a\circ b)=\vf(a\cdot 1\cdot b+b\cdot 1\cdot a)=\vf(a)u\vf(b)+\vf(b)u\vf(a)=\vf(a)\bullet_u \vf(b)+\vf(b)\bullet_u \vf(a).
		$$ 
	\end{proof}
	
	\begin{prop}\label{Jordan-triple-auto-is-u.psi}
		Let $X$ be a finite connected poset and $F$ a field of characteristic different from $2$ and $3$. Then any Jordan triple automorphism of $I(X,F)$ is  of the form $r\psi$, where $r\in \{-1,1\}$ and $\psi$ is either an automorphism or an anti-automorphism.
	\end{prop}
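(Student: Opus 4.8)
The plan is to deduce this from \cref{l T(1) unitary} and \cref{Jord-auto-is-auto-or-anti-auto}, the point being that a Jordan triple automorphism differs from a genuine Jordan automorphism only through the central element $u=\vf(\dl)$, which here is forced to be $\pm\dl$. So let $\vf$ be a Jordan triple automorphism of $I(X,F)$. First I would apply \cref{l T(1) unitary} with $A=B=I(X,F)$: it yields that $u=\vf(\dl)$ is a central invertible element with $u^2=\dl$, and that $\vf\colon I(X,F)\to (I(X,F),\bullet_u)$ preserves the Jordan product, where $a\bullet_u b=aub$.

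Next, since $X$ is connected, the center of $I(X,F)$ is $\{r\dl\mid r\in F\}$, so $u=r\dl$ for some $r\in F$; the relation $u^2=\dl$ then forces $r^2=1$, and as $\ch(F)\ne 2$ this gives $r\in\{-1,1\}$. In particular $a\bullet_u b=a(r\dl)b=rab$ for all $a,b\in I(X,F)$. Now set $\psi=r\vf$, so that $\vf=r\psi$ because $r^2=1$. A short computation shows that $\psi$ preserves the usual Jordan product: indeed $\psi(ab+ba)=r\vf(ab+ba)=r(\vf(a)\bullet_u\vf(b)+\vf(b)\bullet_u\vf(a))=r^2(\vf(a)\vf(b)+\vf(b)\vf(a))=\psi(a)\psi(b)+\psi(b)\psi(a)$. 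Since $\ch(F)\ne 2$, the algebra $I(X,F)$ is $2$-torsion-free, and hence a bijective linear map preserving the Jordan product is a Jordan automorphism (it satisfies \cref{vf(a^2),vf(aba)}). Thus $\psi$ is a Jordan automorphism of $I(X,F)$.

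Finally, \cref{Jord-auto-is-auto-or-anti-auto} tells us that $\psi$ is either an automorphism or an anti-automorphism of $I(X,F)$, and therefore $\vf=r\psi$ with $r\in\{-1,1\}$ has exactly the asserted form. I do not expect a serious obstacle here: the real work is already contained in \cref{l T(1) unitary} (centrality of $u$ and the $\bullet_u$-twisted Jordan identity) and in \cref{Jord-auto-is-auto-or-anti-auto}. The only points that need care are the identification $u=\pm\dl$ via the center of the incidence algebra and the bookkeeping showing that rescaling $\vf$ by $r$ undoes the $\bullet_u$-twist; note that $\ch(F)\ne 3$ plays no role in this particular argument, whereas $\ch(F)\ne 2$ is what lets us pass from Jordan-product preservation to the full set of Jordan-homomorphism identities.
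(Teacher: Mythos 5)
Your proposal is correct and follows essentially the same route as the paper: apply \cref{l T(1) unitary} to get $u=\vf(\dl)$ central with $u^2=\dl$, identify $u=\pm\dl$ via the center of $I(X,F)$, undo the $\bullet_u$-twist by the sign $r$, use $2$-torsion-freeness to upgrade Jordan-product preservation to a Jordan automorphism, and finish with \cref{Jord-auto-is-auto-or-anti-auto}. Your side remark that $\ch(F)\ne 3$ is not actually used in this step (it is only needed earlier, in \cref{p trip pres is triple homo}) is also accurate.
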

	\begin{proof}
		\cref{l T(1) unitary} assures that $u=\vf(1)\in C(I(X,F))$ and $u^2=\delta$. Thus $u$ is of the form $u=r \delta$ for some $r\in F$ such that $r^2=1$. Since $F$ is a field, $r=\pm 1,$ that is, $u=\pm \delta.$ 
		If $u=\delta,$ then the product $a \bullet_u b$ coincides with the original product in $B$ and hence $u\vf=\vf$ is a Jordan automorphism by \cref{l T(1) unitary}. On the other hand, if $u=-\delta$ then $a \bullet_u b=-a b$, so $\vf$ is the negative of a Jordan automorphism in this case. 
		
		Thus, we have shown that $\varphi=u \psi, $ where $u\in \{-1,1\}$ and $\psi$ is a Jordan automorphism. By \cref{Jord-auto-is-auto-or-anti-auto}, $\psi$ is either an automorphism or an anti-automorphism.
	\end{proof}
	
	\begin{thrm}\label{trip-pres-description}
		Let $X$ be a finite connected poset and $F$ a field of characteristic different from $2$ and $3$. Then any bijective $F$-linear tripotent preserver of $I(X,F)$ is of the form $r\psi$, where $r\in \{-1,1\}$ and $\psi$ is either an automorphism or an anti-automorphism of $I(X,F)$. 
	\end{thrm}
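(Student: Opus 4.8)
The plan is to read the theorem off from the results already in place. Since $\ch(F)\notin\{2,3\}$, the field $F$---and hence the $F$-algebra $I(X,F)$---is both $2$-torsion-free and $3$-torsion-free, so a bijective $F$-linear tripotent preserver $\vf:I(X,F)\to I(X,F)$ satisfies the hypotheses of \cref{p trip pres is triple homo} with $B=I(X,F)$. That proposition tells us $\vf$ is a Jordan triple homomorphism, i.e.\ it satisfies \cref{vf(aba)}. To upgrade this to a Jordan triple \emph{automorphism} I would invoke bijectivity: for $x=\vf(a)$ and $y=\vf(b)$ one has $\vf\m(xyx)=\vf\m(\vf(aba))=aba=\vf\m(x)\vf\m(y)\vf\m(x)$, so $\vf\m$ is again a Jordan triple homomorphism and $\vf$ is a Jordan triple automorphism of $I(X,F)$.

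Now \cref{Jordan-triple-auto-is-u.psi} applies verbatim and yields $\vf=r\psi$ with $r\in\{-1,1\}$ and $\psi$ an automorphism or anti-automorphism of $I(X,F)$; this settles the nontrivial (``only if'') direction. For the converse I would simply observe that automorphisms and anti-automorphisms of $I(X,F)$ preserve $k$-potents for every $k$ (as recorded in the preliminaries), while scaling by $r=\pm1$ sends tripotents to tripotents, since $(r\psi(a))^3=r^3\psi(a^3)=r\psi(a)$ whenever $a^3=a$. Hence every map $r\psi$ of the stated form is a bijective $F$-linear tripotent preserver, and the equivalence follows.

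I do not anticipate a real obstacle at this stage: the substantive work---passing from a tripotent preserver to a Jordan (triple) homomorphism through the auxiliary product $\bullet_u$ in \cref{p trip pres is triple homo}, pinning down $\vf(\dl)=\pm\dl$ in \cref{l T(1) unitary}, and the classification of Jordan automorphisms in \cref{Jord-auto-is-auto-or-anti-auto}---has already been completed in the preceding sections. The only step meriting a moment of care is the elementary remark that the inverse of a bijective Jordan triple homomorphism is again a Jordan triple homomorphism, which is exactly the one-line computation displayed in the first paragraph.
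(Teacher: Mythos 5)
Your proposal is correct and follows exactly the paper's route: apply \cref{p trip pres is triple homo} to get a Jordan triple homomorphism, note bijectivity, and conclude via \cref{Jordan-triple-auto-is-u.psi}; the paper's proof is literally "a consequence of" those two results. Your extra remarks (that $\vf\m$ is again a Jordan triple homomorphism, and the converse verification) are harmless additions not needed for the statement as phrased.
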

	\begin{proof}
		A consequence of \cref{p trip pres is triple homo,Jordan-triple-auto-is-u.psi}.
	\end{proof}
	
	\section{$k$-potent preservers of $I(X,F)$}\label{sec-k-potent-pres}
	
	Let $k$ be a fixed integer greater than or equal to $3$.
	In order to prove unified versions of some results of \cref{sec-trip-pres} that hold for $k$-potent preservers, we need to restrict ourselves to the case of $F$-algebras, where $F$ is a field with a certain condition which will be specified below.
	
	\begin{rem}\label{rem p=0}
		Let $F$ be a field with $|F|\ge k$. Given $k$ distinct elements $t_0,\dots,t_{k-1}\in F$ and a polynomial $P=a_0 +a_1 t+\ldots +a_{k-1} t^{k-1}$ with coefficients in a vector space over $F$, we have $P(t)=0$ for all $t\in\{t_0,\dots,t_{k-1}\}$ if and only if $a_i=0$ for all $i=0,\dots,k-1$. Indeed, the matrix of the linear system $P(t_i)=0$, $i=0,\dots,k-1$, in variables $a_0,\dots,a_{k-1}$, is a Vandermonde matrix whose determinant is $\prod_{0\le i<j\le k-1}(t_j-t_i)\ne 0$, hence the system has only the trivial solution.  
	\end{rem}  
	
	The following  result is inspired by~\cite[Lemma~5]{Chan-Lim92} about $k$-th powers preservers on matrix algebras. It turns out that it is possible to relax the $k$-th powers preserving condition by imposing an extra assumption on the field. Observe that the case $k=3$ holds for rings (see \cref{l maps orth trip}). We include the proof for completeness.
	
	\begin{lem}\label{l maps orth if k-potents}
		Let $F$ be a field admitting a $(k-1)$-th primitive root of the unity. Then any linear $k$-potent preserver $\vf:A\to B$ between associative $F$-algebras maps orthogonal $k$-potents to commuting $k$-potents. If we further assume that $\ch(F) \nmid k$ then $\vf$  maps orthogonal $k$-potents to orthogonal $k$-potents.
	\end{lem}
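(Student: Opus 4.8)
The plan is to scale $a$ by the $(k-1)$-th roots of unity, feed the resulting $k$-potents into $\vf$, and read off relations between $\vf(a)$ and $\vf(b)$ by a Vandermonde argument, exactly in the spirit of the special case \cref{l maps orth trip} (where $k=3$ and the roots of unity are $\pm1$).

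First I would record that if $c\in F$ satisfies $c^{k-1}=1$ and $w$ is a $k$-potent of $A$, then $cw$ is again a $k$-potent, since $(cw)^k=c^kw^k=(c^{k-1}c)\,w=cw$. A primitive $(k-1)$-th root of unity provides $k-1$ pairwise distinct such scalars $c$. Given orthogonal $k$-potents $a,b$, each $ca$ is orthogonal to $b$, and since $ca$ and $b$ commute while every mixed term in the expansion of $(ca+b)^k$ carries the factor $(ca)b=0$, the element $ca+b$ is again a $k$-potent, equal to $(ca)^k+b^k=ca+b$. Applying the $F$-linear map $\vf$ and setting $x=\vf(a)$ and $y=\vf(b)$ --- which are $k$-potents of $B$ by hypothesis, so $x^k=x$ and $y^k=y$ --- we obtain $(cx+y)^k=cx+y$ for each of these $k-1$ values of $c$.

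Next I would expand $(cx+y)^k=\sum_{j=0}^{k}c^jW_j$, where $W_j$ is the sum of all length-$k$ words in the letters $x,y$ that contain the letter $x$ exactly $j$ times; in particular $W_0=y^k=y$ and $W_k=x^k=x$. Since $c^{k-1}=1$, collecting the powers of $c$ modulo $k-1$ rewrites this identity as
\[
c^0\bigl(W_0+W_{k-1}-y\bigr)+c^1\bigl(W_1+W_k-x\bigr)+\sum_{r=2}^{k-2}c^rW_r=0,
\]
a polynomial of degree at most $k-2$ in $c$ vanishing at $k-1$ distinct points; by the Vandermonde argument of \cref{rem p=0} all of its coefficients vanish. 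Together with $W_0=y$ and $W_k=x$ this yields $W_{k-1}=0$ and $W_1=0$ (the vanishing of the remaining $W_r$ will not be needed).

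Finally, from $W_{k-1}=\sum_{m=1}^{k}x^{m-1}yx^{k-m}=0$ I would obtain commutativity: multiplying on the left by $x$ and using $x^k=x$ gives $xy+\sum_{m=1}^{k-1}x^myx^{k-m}=0$, multiplying on the right by $x$ gives $yx+\sum_{m=1}^{k-1}x^myx^{k-m}=0$, and subtracting these yields $xy=yx$; this proves the first assertion. For the second, assume in addition $\ch(F)\nmid k$. Since $x$ and $y$ now commute, each of the $k$ words in $W_1$ equals $xy^{k-1}$, so $0=W_1=k\,xy^{k-1}$; as $k$ is invertible in $F$ this forces $xy^{k-1}=0$, and multiplying on the right by $y$ with $y^k=y$ gives $xy=0$, hence also $yx=0$. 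The one genuine idea here is the first step --- producing the family $\{\vf(ca+b)\}$ of $k$-potents so that a single Vandermonde system isolates the ``linear'' words $W_1$ and $W_{k-1}$; everything after that is a short computation, and it recovers the proof of \cref{l maps orth trip} when $k=3$.
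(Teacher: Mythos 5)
Your proof is correct and follows essentially the same route as the paper: perturb one of the orthogonal $k$-potents by the $(k-1)$-th roots of unity, apply $\vf$, and use the Vandermonde argument of \cref{rem p=0} to kill the coefficients of the resulting polynomial identity, then extract commutativity from the coefficient with a single $y$ (the paper's $a_1$, your $W_{k-1}$) and orthogonality from the factor $k$ appearing after commutation. The only cosmetic differences are that you fold $c^{k-1}$ into the constant term instead of adjoining the node $t=0$, and you use $W_1$ rather than reusing $W_{k-1}$ for the final step.
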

	\begin{proof}
		Since $F$ contains a $(k-1)$-th primitive root of the unity, we can find $k-1$ distinct $(k-1)$-th roots of unity (equivalently, non-zero $k$-potents) $t_1,\ldots,t_{k-1}$ in $F.$  Fix  two orthogonal $k$-potents $e,f$ in $A$. It is easy to see that $e+t f$ is also a $k$-potent for all $t\in \{t_1,\ldots,t_{k-1}\}$. Thus for every $t\in \{t_1,\ldots,t_{k-1}\}$ we have 
		\begin{align}\label{varphi(e+t f)^k}
		\varphi(e)+t \varphi(f)&=\varphi(e+tf)=\varphi(e+tf)^k=(\varphi(e)+t \varphi(f))^k\notag\\
		&=
		\varphi(e)+ t a_1+\ldots + t^{k-1}a_{k-1}+t \varphi(f)
		\end{align} 
		for some $a_1,\ldots,a_{k-1} \in A.$ It follows from \cref{varphi(e+t f)^k} that $$ 
		P(t)=t a_1+\ldots + t^{k-1}a_{k-1}=0
		$$ 
		for all $t\in \{t_1,\ldots,t_{k-1}\}.$ Obviously, $P(0)$ is also $0$.  Using \cref{rem p=0} with $t_0=0,t_1,\dots,t_{k-1}$, we conclude that $a_i=0$ for all $i=1,\ldots,k-1.$ In particular, 
		\begin{align}\label{a1=0}
		0=a_1=\sum_{i=0}^{k-1}\varphi(e)^i\varphi(f) \varphi(e)^{k-i-1}. 
		\end{align}
		Multiplying \cref{a1=0} by $\vf(e)$ on the left we arrive at
		\begin{align}\label{a1vf(e)}
		0&=\vf(e)a_1=\varphi(e) \varphi(f) \varphi(e)^{k-1}+\varphi(e)^2 \varphi(f) \varphi(e)^{k-2}+\ldots\notag\\
		&\quad+\varphi(e)^{k-1} \varphi(f) \varphi(e)+\varphi(e)\vf(f).
		\end{align}
		Similarly multiplying \cref{a1=0} by $\vf(e)$ and on the right, we obtain
		\begin{align}\label{vf(e)a1}
		0&= a_1\vf(e)=\varphi(f) \varphi(e)+\varphi(e) \varphi(f) \varphi(e)^{k-1}+\ldots\notag\\
		&\quad+\varphi(e)^{k-2} \varphi(f) \varphi(e)^2+\varphi(e)^{k-1} \varphi(f) \varphi(e).
		\end{align}
		Now \cref{a1vf(e)} together with \cref{vf(e)a1} give 
		\begin{align*} 
		0=\varphi(e) a_1-a_1 \varphi(e)=\varphi(e) \varphi(f)-\varphi(f)\varphi(e),
		\end{align*}
		that is, $\vf(e)$ and $\vf(f)$ commute. Thus, \cref{a1=0} becomes $0=a_1=k \varphi(e)^{k-1}\varphi(f). $ If we assume that $\ch(F) \nmid k$, we can assure that $\varphi(e)^{k-1}\varphi(f)=0.$ Multiplying the latter equality on the left by $\varphi(e)$ we have $\varphi(e)\varphi(f)=0.$ Finally, since $\varphi(e)$ and $\varphi(f)$ commute we also have $\varphi(f) \varphi(e)=0.$
	\end{proof}
	
	
	
	Throughout the rest of the section $F$ will be a field admitting a $(k-1)$-th primitive root of the unity and such that $\ch(F) \nmid k$. 
	
	\begin{lem} \label{l k-potent pres 1}
		Let $X$ be a finite poset, $\varphi:I(X,F)\to B$ a linear $k$-potent preserver and $u=\vf(\dl)$. Then $u\in C_B(\vf(I(X,F)))$ and $\varphi: I(X,F)\to (u^{k-1}Bu^{k-1},\bullet_u)$ is a Jordan homomorphism.
	\end{lem}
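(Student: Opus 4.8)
The plan is to imitate the proof of \cref{p trip pres is triple homo}, replacing tripotents by $k$-potents and the ad hoc corner product by $\bullet_u$ for the idempotent $u^{k-1}$. Write $u=\vf(\dl)$ and $p=u^{k-1}$. First I would fix an idempotent $e\in I(X,F)$ and use that $e$ and $\dl-e$ are orthogonal idempotents, hence orthogonal $k$-potents; by \cref{l maps orth if k-potents}, $a:=\vf(e)$ and $b:=\vf(\dl-e)=u-a$ are orthogonal $k$-potents, so that $ab=ba=0$, $a^k=a$, $b^k=b$ and $u=a+b$.

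Next comes a short computation, uniform in $k\ge 3$. Since $ab=ba=0$, all cross terms in the binomial expansion vanish, so $u^n=a^n+b^n$ for every $n\ge 1$; in particular $u^k=u$, so $u$ is a $k$-potent and $p=u^{k-1}$ is an idempotent of $B$. As $b^{k-1}a=ab^{k-1}=0$ (each of these products contains the factor $ab=ba=0$) while $a^{k-1}a=aa^{k-1}=a^k=a$, one gets $pa=ap=a$, and in the same way $pu=up=u$; hence $a,u\in pBp$, and moreover $ua=(a+b)a=a^2=a(a+b)=au$. Finally $a\bullet_u a=au^{k-2}a=aa^{k-2}a+ab^{k-2}a=a^k+0=a$, the middle term vanishing because it contains $ab=0$; thus $\vf(e)$ is an idempotent of the unital associative $F$-algebra $(pBp,\bullet_u)$. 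So, for every idempotent $e$ of $I(X,F)$, we have simultaneously $\vf(e)\in pBp$, $u\vf(e)=\vf(e)u$ and $\vf(e)\bullet_u\vf(e)=\vf(e)$.

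The last step upgrades these pointwise facts to all of $I(X,F)$ by linearity: the conditions ``$pg=gp=g$'' and ``$ug=gu$'' on $g\in B$ are $F$-linear in $g$, and $\{e_x\}_{x\in X}\cup\{e_x+e_{xy}\}_{x<y}$ is a basis of $I(X,F)$ consisting of idempotents. Hence $\vf(I(X,F))\subseteq pBp=u^{k-1}Bu^{k-1}$ and $u\in C_B(\vf(I(X,F)))$. Since $\vf$, regarded as an $F$-linear map $I(X,F)\to(u^{k-1}Bu^{k-1},\bullet_u)$ into a unital associative $F$-algebra, preserves idempotents by the previous step, it is a Jordan homomorphism by \cref{Idemp-pres-is-Jordan-homo}; here one notes that the standing hypotheses on $F$ (a $(k-1)$-th primitive root of unity together with $\ch(F)\nmid k$) preclude $\ch(F)=2$, so $F$, and hence the target algebra, is $2$-torsion-free (alternatively one may invoke \cref{Idemp-pres-over-field-with>2-elem}).

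I do not expect a genuine obstacle: everything rests on \cref{l maps orth if k-potents} together with the orthogonality relations $ab=ba=0$ and the identities $a^k=a$, $u^k=u$. The only points requiring a little care are the uniformity of the corner computation in $k$ (in particular the degenerate exponents when $k=3$) and the legitimacy of the passage from idempotents to arbitrary elements through the idempotent basis of $I(X,F)$; the characteristic remark is needed only in order to be entitled to quote \cref{Idemp-pres-is-Jordan-homo} in its ``Jordan homomorphism'' rather than merely ``preserves the Jordan product'' form.
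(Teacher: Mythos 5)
Your proof is correct and follows essentially the same route as the paper: orthogonality of $\vf(e)$ and $\vf(\dl-e)$ via \cref{l maps orth if k-potents}, the identity $u^n=\vf(e)^n+\vf(\dl-e)^n$, the corner/commutation relations extended to all of $I(X,F)$ through the idempotent basis, and the reduction to \cref{Idemp-pres-is-Jordan-homo}. If anything, you are slightly more explicit than the paper on two points worth keeping: the verification that $u$ itself (not just $u^{k-1}$) commutes with $\vf(e)$, and the observation that the standing hypotheses on $F$ force $\ch(F)\ne 2$.
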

	\begin{proof}
		Let $e\in E(I(X,F))$. Since $e$ and $\dl-e$ are orthogonal idempotents (in particular, $k$-potents), then $\vf(e)$ and $\vf(\dl-e)$ are orthogonal $k$-potents by \cref{l maps orth if k-potents}, so
		\begin{align}\label{vf(1)^n=vf(e)^n+vf(1-e)^n}
		u^n=\vf(e)^n+\vf(\dl-e)^n   
		\end{align}
		for all $n>0$. Multiplying \cref{vf(1)^n=vf(e)^n+vf(1-e)^n} with $n=k-1$ by $\vf(e)$ on the left or on the right, we obtain
		\begin{align}\label{vf(dl)^(k-1)vf(e)=vf(e)=vf(evf(dl)^(k-1))} 
		u^{k-1}\vf(e)=\vf(e)=\vf(e)u^{k-1}.
		\end{align}
		Applying \cref{vf(dl)^(k-1)vf(e)=vf(e)=vf(evf(dl)^(k-1))} to the idempotents $e_x$ and $e_x+e_{xy}$, $x<y$, we show that
		\begin{align}\label{u^(k-1)vf(e_xy)=vf(e_xy)=vf(e)u^(k-1)}
		u^{k-1}\vf(e_{xy})=\vf(e_{xy})=\vf(e_{xy})u^{k-1}
		\end{align}
		for all  $x\le y$. In particular,
		$$
		u^{k-1}\in C_B(\vf(I(X,F))) \mbox{ and } \vf( I(X,F))\subseteq u^{k-1} Bu^{k-1}.
		$$
		Now, for any $e\in E(I(X,F))$ using \cref{vf(1)^n=vf(e)^n+vf(1-e)^n} we have 
		$$
		\vf(e)\bullet_u \vf(e)=\vf(e)u^{k-2} \vf(e)=\vf(e)(\vf(e)^{k-2}+\vf(\dl-e)^{k-2}) \vf(e)=\vf(e)^k=\vf(e).
		$$ 
		Therefore, $\varphi: I(X,F)\to (u^{k-1}Bu^{k-1},\bullet_u)$ preserves idempotents. By \cref{Idemp-pres-is-Jordan-homo} it preserves the Jordan product. Since $\ch(F)\ne 2$, $\vf$ is a Jordan homomorphism.
	\end{proof}
	
	
	
	
	
	We specify the result of \cref{l k-potent pres 1} in the case where $1\in \varphi( I(X,F))$.
	\begin{cor} \label{c k-potent pres}
		Let $X$ be a finite poset and $\varphi:I(X,F)\to B$ a linear $k$-potent preserver such that $1\in \varphi( I(X,F))$. Then there exist a Jordan homomorphism $\psi: I(X,F)\to B$ and $u\in C_B(\vf(I(X,F)))$ with $u^{k-1}=1$ such that $\varphi=u\psi$.
	\end{cor}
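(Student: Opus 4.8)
The plan is to feed the hypothesis $1\in\varphi(I(X,F))$ into \cref{l k-potent pres 1}. Write $u=\varphi(\dl)$; that lemma already provides $u^{k-1}\in C_B(\varphi(I(X,F)))$, the containment $\varphi(I(X,F))\subseteq u^{k-1}Bu^{k-1}$, and the fact that $\varphi\colon I(X,F)\to(u^{k-1}Bu^{k-1},\bullet_u)$ is a Jordan homomorphism, where $a\bullet_u b=au^{k-2}b$. The first step is to upgrade ``$u^{k-1}$ is central'' to ``$u^{k-1}=1$'': since $u$ is a $k$-potent, $u^{k-1}$ is an idempotent and hence the two-sided identity of the subalgebra $u^{k-1}Bu^{k-1}$; as $1\in\varphi(I(X,F))\subseteq u^{k-1}Bu^{k-1}$, we get $u^{k-1}=u^{k-1}\cdot 1=1$. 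Consequently $u$ is invertible with $u^{-1}=u^{k-2}$, we have $u^{k-1}Bu^{k-1}=B$, and $\varphi\colon I(X,F)\to(B,\bullet_u)$ is a Jordan homomorphism with $u$ the identity of $\bullet_u$.

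Next I would verify that $u$ itself centralises $\varphi(I(X,F))$, reusing the orthogonality argument from the proofs of \cref{l maps orth if k-potents,l k-potent pres 1}. For any idempotent $e\in I(X,F)$ the elements $e$ and $\dl-e$ are orthogonal $k$-potents, so $\varphi(e)$ and $\varphi(\dl-e)$ are orthogonal by \cref{l maps orth if k-potents} (here we use $\ch(F)\nmid k$); since $u=\varphi(e)+\varphi(\dl-e)$, this yields $u\varphi(e)=\varphi(e)^2=\varphi(e)u$. Applying this to $e=e_x$ and to $e=e_x+e_{xy}$ for $x<y$, and using linearity together with the fact that $\{e_{xy}\mid x\le y\}$ spans $I(X,F)$, we obtain $u\varphi(f)=\varphi(f)u$ for every $f\in I(X,F)$, i.e. $u\in C_B(\varphi(I(X,F)))$.

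Finally I would construct $\psi$. Because $u^{k-1}=1$, the linear map $\vt\colon B\to B$ given by $\vt(b)=u^{k-2}b$ is a bijection (with inverse $b\mapsto ub$) and satisfies $\vt(a\bullet_u b)=u^{k-2}au^{k-2}b=\vt(a)\vt(b)$, hence is an algebra isomorphism $(B,\bullet_u)\to(B,\cdot)$. Therefore $\psi:=\vt\circ\varphi\colon I(X,F)\to B$ is a Jordan homomorphism, being the composition of the Jordan homomorphism $\varphi\colon I(X,F)\to(B,\bullet_u)$ with an algebra isomorphism; and $u\psi(f)=u\cdot u^{k-2}\varphi(f)=u^{k-1}\varphi(f)=\varphi(f)$, so $\varphi=u\psi$, as desired. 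I do not expect a genuine obstacle here: the real content is the short observation in the first step that the identity lying in the image forces $u^{k-1}=1$, after which everything reduces to routine bookkeeping with the product $\bullet_u$.
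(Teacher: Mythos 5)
Your proof is correct and follows essentially the same route as the paper: set $u=\vf(\dl)$, use the hypothesis $1\in\vf(I(X,F))$ together with $u^{k-1}\vf(f)=\vf(f)$ to force $u^{k-1}=1$, and take $\psi=u^{k-2}\vf$. The only (harmless) difference is that you obtain the Jordan homomorphism property of $\psi$ by transporting it through the algebra isomorphism $b\mapsto u^{k-2}b$ from $(B,\bullet_u)$ to $(B,\cdot)$, whereas the paper re-verifies directly that $\psi$ preserves idempotents and then invokes \cref{Idemp-pres-is-Jordan-homo}; both are valid, and your reproof of $u\in C_B(\vf(I(X,F)))$ merely duplicates what \cref{l k-potent pres 1} already asserts.
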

	\begin{proof}
		As in \cref{l k-potent pres 1} put $u=\vf(\dl)$. Then $u\in C_B(\vf(I(X,F)))$ and it follows from \cref{u^(k-1)vf(e_xy)=vf(e_xy)=vf(e)u^(k-1)} that $u^{k-1}\varphi(f)=\varphi(f)$ for all $f\in I(X,F)$. Since  $1\in \varphi( I(X,F))$  there exists $f$ such that $\varphi(f)=1$. Hence, $u^{k-1}=1$.
		
		Now define $\psi=u^{k-2} \varphi$. In view of \cref{Idemp-pres-is-Jordan-homo} and the fact that $\ch(F)\ne 2$ it suffices to prove that $\psi$ preserves the idempotents of $I(X,F)$. Let $e\in E(I(X,F))$. Then $\vf(e)\in P_k(B)$ and by \cref{vf(1)^n=vf(e)^n+vf(1-e)^n} we have
		\begin{align*}
		\psi(e)^2&=u^{k-2} \varphi(e) u^{k-2} \varphi(e)=u^{k-2} \varphi(e)(\vf(e)^{k-2}+\vf(\dl-e)^{k-2})\varphi(e)\\
		&=u^{k-2} \varphi^k(e)=u^{k-2} \varphi(e)=\psi(e),
		\end{align*}
		as desired. Clearly, $u\psi=u^{k-1}\varphi=\varphi$.  
	\end{proof}

	\begin{cor} \label{c k-potent pres surj}
		Let $X$ be a finite poset and $\varphi:I(X,F)\to I(X,F)$ a surjective linear $k$-potent preserver. Then there exist a Jordan homomorphism $\psi: I(X,F)\to I(X,F)$ and $r\in F$ with $r^{k-1}=1$ such that $\varphi=r  \psi$.
	\end{cor}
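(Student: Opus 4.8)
The plan is to obtain this as a quick specialization of \cref{c k-potent pres}. Since $\varphi$ is surjective, the identity $\dl$ lies in $\varphi(I(X,F))$, so \cref{c k-potent pres} applies with $B=I(X,F)$ and produces a Jordan homomorphism $\psi:I(X,F)\to I(X,F)$ together with an element $u\in C_{I(X,F)}(\varphi(I(X,F)))$ satisfying $u^{k-1}=\dl$ and $\varphi=u\psi$. Incidentally $u$ is then invertible with inverse $u^{k-2}$, so $\psi=u^{k-2}\varphi$ is again surjective and hence a Jordan automorphism, although we will not need this.

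Next I would use surjectivity a second time: because $\varphi(I(X,F))=I(X,F)$, the commutant $C_{I(X,F)}(\varphi(I(X,F)))$ collapses to the center $C(I(X,F))$. By the description of the center of an incidence algebra recalled in \cref{sec-prelim}, when $X$ is connected this center is exactly $\{r\dl\mid r\in F\}$, so $u=r\dl$ for some $r\in F$. Substituting this into $u^{k-1}=\dl$ gives $r^{k-1}\dl=\dl$, that is $r^{k-1}=1$, and therefore $\varphi=u\psi=r\psi$ with $\psi$ a Jordan homomorphism, as required.

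There is essentially no obstacle here: all the work has already been done in \cref{l k-potent pres 1} and \cref{c k-potent pres}, and the only new observation is that replacing an arbitrary linear $k$-potent preserver by a surjective one shrinks the commutant of the image down to the center. The single point worth keeping in mind is that the identification $C(I(X,F))=F\dl$ — which is precisely what allows the scalar to be recorded as a genuine element $r\in F$ — relies on $X$ being connected; over a disconnected poset one would only get $\varphi=u\psi$ with $u$ central but not necessarily scalar, so this hypothesis cannot be dispensed with.
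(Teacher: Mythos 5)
Your argument is correct and is essentially the paper's own (one-line) proof: surjectivity puts $\dl$ in the image so that \cref{c k-potent pres} applies, and it also forces $u=\vf(\dl)$ to lie in the full center, hence $u=r\dl$ with $r^{k-1}=1$. Your closing observation is also well taken: the identification $C(I(X,F))=F\dl$ uses connectedness of $X$, a hypothesis the corollary's statement omits but which is in force in the paper's final \cref{k-potent-pres-description}.
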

	\begin{proof}
		For, $u\in C(I(X,F))$, so $u=r\dl$ in this case.
	\end{proof}
	
	We finally obtain a description of all bijective linear $k$-potent preservers of an incidence algebra. 
	
	\begin{thrm}\label{k-potent-pres-description}
		Let $X$ be a finite connected poset and $F$ a field admitting a $(k-1)$-th primitive root of the unity and such that $\ch(F) \nmid k$. Then any bijective linear $k$-potent preserver of $I(X,F)$ is of the form $r\psi$, where $r\in F$ is such that $r^{k-1}=1$ and $\psi$ is either an automorphism or an anti-automorphism of $I(X,F)$.
	\end{thrm}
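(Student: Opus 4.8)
The plan is to derive this theorem directly from the surjective case already treated in \cref{c k-potent pres surj}, combined with the classification of Jordan automorphisms of $I(X,F)$ obtained in \cref{Jord-auto-is-auto-or-anti-auto}. Essentially all of the analytic content has been extracted upstream --- the orthogonality-preservation step in \cref{l maps orth if k-potents}, the reduction to a Jordan homomorphism via the new product $\bullet_u$ in \cref{l k-potent pres 1}, and the structure theory of \cref{sec-Jordan-auto} --- so what remains here is a short bookkeeping argument, exactly parallel to the proof of \cref{trip-pres-description} in the case $k=3$.

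First I would note that a bijective map is in particular surjective, so \cref{c k-potent pres surj} applies to $\varphi$ and yields a factorization $\varphi = r\psi$, where $r \in F$ satisfies $r^{k-1} = 1$ (hence $r \neq 0$, being a root of unity) and $\psi\colon I(X,F)\to I(X,F)$ is a Jordan homomorphism. Next I would observe that $\psi = r^{-1}\varphi$ is itself bijective, being the composition of the bijection $\varphi$ with multiplication by the unit $r^{-1}\in F$; thus $\psi$ is a Jordan automorphism of $I(X,F)$. Since $X$ is finite and connected and $F$ is a field, \cref{Jord-auto-is-auto-or-anti-auto} then forces $\psi$ to be either an automorphism or an anti-automorphism of $I(X,F)$, which is precisely the claim.

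I do not expect a genuine obstacle at this last step: the difficulty is all in the earlier sections. The only point I would take care to spell out is that the standing hypotheses on $F$ already guarantee $\ch(F)\neq 2$, which is what makes \cref{c k-potent pres surj} (and, through it, \cref{Idemp-pres-is-Jordan-homo}) applicable. Indeed, the existence of a primitive $(k-1)$-th root of unity in $F$ forces $\ch(F)\nmid (k-1)$, and combined with the assumption $\ch(F)\nmid k$ and the fact that $k-1$ and $k$ are consecutive integers (one of them even), this rules out $\ch(F)=2$. With that remark in hand, the chain ``surjectivity $\leadsto$ \cref{c k-potent pres surj} $\leadsto$ bijectivity of $\psi$ $\leadsto$ \cref{Jord-auto-is-auto-or-anti-auto}'' completes the proof.
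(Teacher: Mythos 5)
Your proof is correct and takes essentially the same route as the paper, which also deduces the theorem directly from \cref{c k-potent pres surj} together with \cref{Jord-auto-is-auto-or-anti-auto}. Your explicit verification that the hypotheses force $\ch(F)\ne 2$ (so that the upstream results apply) is a detail the paper leaves implicit, and it is argued correctly.
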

	\begin{proof}
		Easily follows from  \cref{c k-potent pres surj,Jord-auto-is-auto-or-anti-auto}.
	\end{proof}

	\begin{rem}
		All the results in this section remain valid when the map $\vf$ preserves $k$-th powers, $|F|\ge k$ and $\ch(F) \nmid k$.
	\end{rem}

	\section*{Appendix: $k$-potents of $I(X,F)$}
	Let $k$ be a fixed integer greater than or equal to $2$. The following results are well-known for $k$-potent matrices (see, for example~\cite{McCloskey84}).
	\begin{lem}\label{from-k-potent-to-idempotent}
		Let $R$ be a ring and $a\in P_k(R)$. If $b=\sum_{i=1}^{k-1}a^i$, then $b^2=(k-1)b$.
	\end{lem}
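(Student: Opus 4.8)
The plan is to reduce everything to the single observation that $ab=b$, after which the statement is one line; the relation $a^k=a$ does all the work via a telescoping cancellation. First I would compute
$ab=a\sum_{i=1}^{k-1}a^i=\sum_{i=1}^{k-1}a^{i+1}=a^2+a^3+\dots+a^{k-1}+a^k$,
and then substitute $a^k=a$ to obtain $ab=a+a^2+\dots+a^{k-1}=b$. Thus left multiplication by $a$ fixes $b$, and by an immediate induction $a^i b=b$ for every $i\ge 1$.

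With that in hand, the conclusion follows by expanding and using distributivity (no commutativity or unitality of $R$ is needed): $b^2=\left(\sum_{i=1}^{k-1}a^i\right)b=\sum_{i=1}^{k-1}a^i b=\sum_{i=1}^{k-1}b=(k-1)b$. An alternative, slightly more computational route is to expand $b^2=\sum_{i,j=1}^{k-1}a^{i+j}$ and observe that for each fixed $i$ the exponents $i+1,\dots,i+k-1$ form a complete block of $k-1$ consecutive positive integers; since $a^{n+k-1}=a^n$ for all $n\ge 1$ (an easy induction from $a^k=a$), the powers $a^{i+1},\dots,a^{i+k-1}$ are, up to order, exactly $a,a^2,\dots,a^{k-1}$, so the inner sum equals $b$, and summing over $i$ again yields $(k-1)b$.

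I do not anticipate any genuine obstacle here. The only point requiring a little care is to apply the reduction $a^n=a^{\,n-(k-1)}$ only when $n\ge k$, so that exponents remain positive; this is precisely why the telescoping formulation through $ab=b$ is preferable, as it sidesteps any modular bookkeeping. I would therefore present that route as the main proof.
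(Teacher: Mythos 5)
Your proposal is correct and is essentially identical to the paper's own proof: both establish $ab=b$ by substituting $a^k=a$, deduce $a^ib=b$ for all $i$, and conclude $b^2=\sum_{i=1}^{k-1}a^ib=(k-1)b$. No further comment is needed.
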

	\begin{proof}
		Observe that $ab=\sum_{i=1}^{k-1}a^{i+1}=\sum_{i=2}^ka^i=a+\sum_{i=2}^{k-1}a^i=b$. Therefore, $a^ib=b$ for all $i$, so $b^2=\sum_{i=1}^{k-1}a^ib=\sum_{i=1}^{k-1}b=(k-1)b$.
	\end{proof}
	
	\begin{rem}
		If a field $F$ has a primitive $m$-th root of unity, then either $\ch(F)=0$ or $\ch(F)\nmid m$, and thus $m\in F^*$.
		
		Indeed, assume that $\ch(F)=p$ and $p\mid m$. The existence of a primitive $m$-th root of unity implies $m\mid(p^n-1)$, where $p^n=|F|$. Hence, $p\mid (p^n-1)$, a contradiction.
	\end{rem}
	
	\begin{lem}\label{from-k-potent-to-orthogonal-idempotents}
		Let $F$ be a field admitting a primitive $(k-1)$-th root of unity $\ve\in F$. If $A$ is an $F$-algebra and $a\in P_k(A)$, then $b_i:=\frac 1{k-1}\sum_{s=1}^{k-1}\ve^{is}a^s$, $i=1,\dots,k-1$, are pairwise orthogonal idempotents of $A$ such that $a=\sum_{i=1}^{k-1}\ve^{-i} b_i$.
	\end{lem}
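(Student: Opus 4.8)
The plan is to treat $a$ as an element of order dividing $k-1$ in a unital corner of $A$ and to read off the $b_i$ as its spectral idempotents, verifying the three assertions by direct manipulation of the defining sums. The only external fact needed is the character-orthogonality relation
\[
\sum_{s=1}^{k-1}\ve^{ms}=\begin{cases}k-1,&(k-1)\mid m,\\ 0,&(k-1)\nmid m,\end{cases}
\]
which holds because $\ve^{k-1}=1$ while $\ve^m\ne 1$ whenever $(k-1)\nmid m$. I would also record at the outset that $k-1\in F^*$ — this is exactly the content of the remark preceding the statement, since a primitive $(k-1)$-th root of unity can exist only when $\ch(F)\nmid k-1$ — so the scalar $\tfrac1{k-1}$ is legitimate. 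Putting $e:=a^{k-1}$, from $a^k=a$ one gets $a^{2(k-1)}=a^{k-2}a^k=a^{k-1}=e$, so $e$ is an idempotent and $ae=ea=a^k=a$; thus $a$ is invertible of order dividing $k-1$ in the unital algebra $eAe$.

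The computational heart is the eigenvalue relation $ab_i=\ve^{-i}b_i$. One computes
\[
ab_i=\frac1{k-1}\sum_{s=1}^{k-1}\ve^{is}a^{s+1}=\frac{\ve^{-i}}{k-1}\sum_{s=2}^{k}\ve^{is}a^{s},
\]
and then uses $a^k=a$ together with $\ve^{ik}=\ve^{i(k-1)}\ve^{i}=\ve^{i}$ to see that the $s=k$ summand equals the $s=1$ summand, so $\sum_{s=2}^{k}\ve^{is}a^s=\sum_{s=1}^{k-1}\ve^{is}a^s$ and hence $ab_i=\ve^{-i}b_i$. Since $b_i$ is a polynomial in $a$ it commutes with $a$, so also $b_ia=\ve^{-i}b_i$ and, by induction, $a^sb_i=b_ia^s=\ve^{-is}b_i$ for every $s\ge 0$.

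From here everything is short. Idempotency and pairwise orthogonality come at once from $b_ib_j=\frac1{k-1}\sum_{s=1}^{k-1}\ve^{js}(b_ia^s)=\frac1{k-1}\bigl(\sum_{s=1}^{k-1}\ve^{(j-i)s}\bigr)b_i$, which by the orthogonality relation equals $b_i$ if $i=j$ and $0$ otherwise. For the decomposition of $a$, summing the defining formula over $i$ gives $\sum_{i=1}^{k-1}b_i=\frac1{k-1}\sum_{s=1}^{k-1}\bigl(\sum_{i=1}^{k-1}\ve^{is}\bigr)a^s=a^{k-1}=e$, whence $\sum_{i=1}^{k-1}\ve^{-i}b_i=\sum_{i=1}^{k-1}ab_i=a\sum_{i=1}^{k-1}b_i=ae=a$.

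I do not anticipate a genuine obstacle: the proof is an exercise in discrete Fourier transform inside $eAe$. The points needing a little care are purely bookkeeping — the index shift $s\mapsto s+1$ in the computation of $ab_i$, folding the extra term back via $a^k=a$ and $\ve^{ik}=\ve^i$, and confirming that $\tfrac1{k-1}$ makes sense, which is handled by the preceding remark. (As a sanity check, note that the $\ve^{-i}$, $i=1,\dots,k-1$, run over all $k-1$ distinct $(k-1)$-th roots of unity, so the $b_i$ are precisely the Peirce-type projections associated with the invertible element $a$ of the corner $eAe$.)
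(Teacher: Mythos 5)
Your proof is correct and follows essentially the same route as the paper's: both hinge on the eigenvalue relation $b_i a^s=\ve^{-is}b_i$ and the orthogonality relation $\sum_{s=1}^{k-1}\ve^{ms}=0$ for $(k-1)\nmid m$. The only (harmless) organizational differences are that you obtain idempotency of $b_i$ as the $i=j$ case of the $b_ib_j$ computation instead of invoking \cref{from-k-potent-to-idempotent} applied to $\ve^i a$, and you deduce $a=\sum_i\ve^{-i}b_i$ from $\sum_i b_i=a^{k-1}$ rather than by direct expansion.
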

	\begin{proof}
		Since $\ve^i a\in P_k(A)$, we obtain $b_i\in E(A)$ from \cref{from-k-potent-to-idempotent}. Now
		\begin{align*}
		b_ia^t=\frac 1{k-1}\left(\sum_{s=1}^t\ve^{i(k-t+s-1)}a^s+\sum_{s=t+1}^{k-1}\ve^{i(s-t)}a^s\right)=\frac {\ve^{-it}}{k-1}\sum_{s=1}^{k-1}\ve^{is}a^s=\ve^{-it}b_i,
		\end{align*}
		so for $1\le i\ne j\le k-1$ we have
		\begin{align*}
		b_ib_j=\left(\sum_{t=1}^{k-1}\ve^{(j-i)t}\right)b_i=\frac{\ve^{(j-i)k}-\ve^{j-i}}{\ve^{j-i}-1}b_i=0.
		\end{align*}
		Finally, 
		\begin{align*}
		\sum_{i=1}^{k-1}\ve^{-i} b_i=\frac 1{k-1}\sum_{i,s=1}^{k-1}\ve^{i(s-1)}a^s=a+\frac 1{k-1}\sum_{s=2}^{k-1}\left(\sum_{i=1}^{k-1}\ve^{i(s-1)}\right)a^s=a,
		\end{align*}
		because $\sum_{i=1}^{k-1}\ve^{i(s-1)}=\frac{\ve^{k(s-1)}-\ve^{s-1}}{\ve^{s-1}-1}=0$ for all $2\le s\le k-1$.
	\end{proof}
	
	\begin{prop}\label{p tripotents conjugate}
		Let $X$ be a locally finite poset and $F$ a field admitting a primitive $(k-1)$-th root of unity in $F$. Then $f\in I(X,F)$ is a $k$-potent if and only if $f$ is conjugate to a diagonal element whose entries are either zero or $(k-1)$-th roots of unity.
	\end{prop}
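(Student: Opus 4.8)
The plan is to prove the two implications separately, using the two lemmas just established together with the simultaneous diagonalization of commuting idempotents from \cref{diagonalization-idemp}.

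For the ``if'' direction, suppose $f=\bt d\bt\m$ with $\bt\in I(X,F)$ invertible and $d$ diagonal such that $d(x,x)\in\{0\}\cup\{r\in F\mid r^{k-1}=1\}$ for all $x\in X$. Each such entry satisfies $d(x,x)^k=d(x,x)$ --- trivially if it is $0$, and because $r^k=r\cdot r^{k-1}=r$ if it is a $(k-1)$-th root of unity. Since the diagonal elements form a commutative algebra, this gives $d^k=d$, whence $f^k=\bt d^k\bt\m=\bt d\bt\m=f$. This part is immediate.

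For the ``only if'' direction, let $f\in P_k(I(X,F))$ and let $\ve\in F$ be the given primitive $(k-1)$-th root of unity. By \cref{from-k-potent-to-orthogonal-idempotents} there are pairwise orthogonal idempotents $b_1,\dots,b_{k-1}\in I(X,F)$ with $f=\sum_{i=1}^{k-1}\ve^{-i}b_i$. Orthogonal idempotents commute, so \cref{diagonalization-idemp} yields an invertible $\bt\in I(X,F)$ with $b_i=\bt(b_i)_D\bt\m$ for all $i$ simultaneously. Setting $d=\sum_{i=1}^{k-1}\ve^{-i}(b_i)_D$, which is diagonal, we obtain $f=\bt d\bt\m$, so it only remains to analyze the entries of $d$. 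Since $(b_i)_D=\bt\m b_i\bt$ for the common conjugator $\bt$ and conjugation is an algebra automorphism, the elements $(b_1)_D,\dots,(b_{k-1})_D$ are again pairwise orthogonal idempotents; as $F$ is a field, each $(b_i)_D(x,x)\in\{0,1\}$, and for a fixed $x\in X$ at most one of them can equal $1$, since two of them being $1$ would force $((b_i)_D(b_j)_D)(x,x)=1\ne 0$. Hence for every $x\in X$ the entry $d(x,x)=\sum_{i=1}^{k-1}\ve^{-i}(b_i)_D(x,x)$ is either $0$ or equals $\ve^{-i_0}$ for the unique index $i_0$ with $(b_{i_0})_D(x,x)=1$, and $\ve^{-i_0}$ is a $(k-1)$-th root of unity. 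This is exactly the asserted form.

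There is no serious obstacle here: the argument is a direct combination of \cref{from-k-potent-to-orthogonal-idempotents} and \cref{diagonalization-idemp}. The only point that needs a little care is observing that the diagonal parts $(b_i)_D$ inherit orthogonality and idempotency --- which follows at once from the fact that they arise by conjugating the $b_i$ by one and the same invertible element --- so that at most one $(b_i)_D$ is non-zero at each point of $X$.
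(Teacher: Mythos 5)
Your proof is correct and follows essentially the same route as the paper: decompose $f$ into orthogonal idempotents via \cref{from-k-potent-to-orthogonal-idempotents}, simultaneously diagonalize them via \cref{diagonalization-idemp}, and reassemble. The only cosmetic difference is that the paper observes the diagonal element can be taken to be $f_D$ itself (since $\sum_i\ve^{-i}(g_i)_D=\bigl(\sum_i\ve^{-i}g_i\bigr)_D=f_D$), whereas you verify the form of the diagonal entries directly; both are fine.
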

	\begin{proof}
		The ``if'' part is trivial, so let us prove the ``only if'' part. Take $f\in P_k(I(X,F))$. It suffices to prove that $f$ is conjugate to its diagonal $f_D$. Consider the corresponding orthogonal idempotents $g_i=\frac 1{k-1}\sum_{s=1}^{k-1}\ve^{is}f^s$, $i=1,\dots,k-1$, from \cref{from-k-potent-to-orthogonal-idempotents}. By \cref{diagonalization-idemp} there exists an invertible $\sg\in I(X,F)$ such that $g_i=\sg(f_i)_D\sg\m$ for all $i=1,\dots,k-1$. Thus, 
		$$
		f=\sum_{i=1}^{k-1}\ve^{-i} g_i=\sum_{i=1}^{k-1}\ve^{-i} \sg(g_i)_D\sg\m=\sg\left(\sum_{i=1}^{k-1}\ve^{-i} (g_i)_D\right)\sg\m=\sg f_D\sg\m.
		$$
	\end{proof}
	
	\begin{cor}
		Let $X$ be a locally finite poset and $F$ a field with $\ch(F)\ne 2$. Then $f\in I(X,F)$ is a tripotent if and only if $f$ is conjugate to a diagonal element whose entries are either $0$ or $\pm 1$.
		
		Indeed, $-1$ is the primitive square root of unity in any field $F$ with $\ch(F)\ne 2$.
	\end{cor}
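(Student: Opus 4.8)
The plan is to deduce this statement immediately from \cref{p tripotents conjugate} specialized to $k=3$. First I would recall that, by the definitions in \cref{sec-prelim}, an element $f\in I(X,F)$ is a tripotent exactly when $f^3=f$, i.e. $T(I(X,F))=P_3(I(X,F))$; hence it suffices to read off \cref{p tripotents conjugate} for the value $k=3$.

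The only hypothesis that needs checking is that $F$ admits a primitive $(k-1)=2$nd root of unity. The square roots of unity in $F$ are the roots of $t^2-1=(t-1)(t+1)$, namely $1$ and $-1$; since $1$ has order $1$, a primitive square root of unity can only be $-1$, and $-1$ has exact order $2$ precisely when $-1\ne 1$, i.e. when $\ch(F)\ne 2$. Thus under the assumption $\ch(F)\ne 2$ the field $F$ contains the primitive square root of unity $\ve=-1$, and the full set of $(k-1)$-th roots of unity in $F$ is $\{1,-1\}=\{\pm 1\}$.

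Applying \cref{p tripotents conjugate} with $k=3$ then yields that $f\in I(X,F)$ is a tripotent if and only if $f$ is conjugate to a diagonal element whose diagonal entries are either $0$ or a square root of unity, that is, either $0$ or $\pm 1$, which is exactly the assertion. I do not expect any real obstacle: all of the substance has already been carried out in \cref{diagonalization-idemp,from-k-potent-to-orthogonal-idempotents,p tripotents conjugate}, and the corollary is merely the case $k=3$ together with the elementary observation that the existence of a primitive square root of unity in $F$ is equivalent to $\ch(F)\ne 2$.
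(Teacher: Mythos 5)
Your proposal is correct and is exactly the argument the paper intends: specialize \cref{p tripotents conjugate} to $k=3$ and observe that $-1$ is the primitive square root of unity precisely when $\ch(F)\ne 2$, so the $(k-1)$-th roots of unity are $\pm 1$. Nothing further is needed.
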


	\begin{exm}
		Let $X=\{x,y\}$ with $x<y$ and $F$ a field with $\ch(F)=2$. Consider $f=\dl+e_{xy}\in I(X,F)$. Then $f^3=f$, but $f_D=\dl$, so $f$ is not conjugate to a diagonal element.
	\end{exm}
	
	
	
	
	\section*{Acknowledgements}
	This work was partially  supported by CNPq 404649/2018-1. The authors are grateful to Professors Manfred Dugas and Daniel Herden for the proof of \cref{diagonalization-idemp}.

	\bibliography{bibl}{}
	\bibliographystyle{acm}

\end{document}